%% file: lms.tex
\documentclass[a4paper,11pt,reqno]{article}

\usepackage[utf8]{inputenc}
\usepackage[T1]{fontenc}
\usepackage{lmodern}
\usepackage[english]{babel}
\usepackage{microtype,csquotes}
\usepackage[normalem]{ulem}

\usepackage[backend=biber,hyperref=true,style=alphabetic,giveninits=true,maxbibnames=99]{biblatex}

\usepackage{tikz}
\usetikzlibrary{math}
\definecolor{midnightblue}{rgb}{0.1, 0.1, 0.44}

\usepackage[pdfusetitle,colorlinks,allcolors=midnightblue]{hyperref}
\AtBeginDocument{}
\AtBeginDocument{}
\AtBeginDocument{}
\hypersetup{
	pdfsubject={Minimization problems with linear and nonlinear total variation.},
	pdfkeywords={total variation, nonlinear total variation, energy minimization, regularity of minimizers,
		optimality conditions, fidelity, piecewise monotone, Cantor, Hölder}}

\usepackage[a4paper,hmargin=2.5cm,bottom=3cm,top=3cm,footskip=3\baselineskip,
	marginparwidth=2.4cm,marginparsep=0.05cm]{geometry}

\usepackage{enumitem}
\setlist[enumerate,1]{label=(\roman*)}
\setlist[enumerate,2]{ref=\theenumi(\theenumii)}

\usepackage{booktabs}

\usepackage{fsmath}
\usepackage{bm}
\usepackage{stmaryrd}

\DeclareMathOperator{\BV}{BV}
\DeclareMathOperator{\SBV}{SBV}
\DeclareMathOperator{\reg}{reg}
\newcommand{\BVreg}{\BV_{\reg}}
\newcommand{\TVreg}{\TV_{\reg}}

\newcommand{\TVphitheta}{\TV\!\!_{\phi,\theta}}
\newcommand{\dx}{\d x}
\newcommand{\dy}{\d y}
\newcommand{\dz}{\d z}

\newcommand{\orcidlink}[1]{\href{https://orcid.org/#1}{#1}}

\title{Minimizers of one-dimensional regularization problems \\ with linear and nonlinear total variation}
\author{
	Luca Lussardi%
	\thanks{Politecnico di Torino, Torino,
	\url{luca.lussardi@polito.it}, \textsc{orcid}: \orcidlink{0000-0001-9130-3573}}\and
	Marco Morandotti%
	\thanks{Politecnico di Torino, Torino,
		\url{marco.morandotti@polito.it}, \textsc{orcid}: \orcidlink{0000-0003-3528-6152}}\and
	Federico Stra%
	\thanks{Politecnico di Torino, Torino,
		\url{federico.stra@polito.it}, \textsc{orcid}: \orcidlink{0000-0001-8151-1045}}
}

\begin{document}

\maketitle

\begin{abstract}
	A study of the minimizers of one-dimensional Rudin--Osher--Fatemi-type functionals with linear or
	nonlinear total variation and fidelity term is undertaken.
	Conditions on the input datum and the parameters of the functional are found that force the
	input itself to be the minimizer or not.
	In the linear setting the discriminating conditions on the parameters are complementary
	highlighting the sharpness of the results.
	The nonlinear setting is substantially different:
	while the non-minimality of the datum is treated in analogy with the linear case,
	for the minimality of the input only partial answers are found.
	The results in the nonlinear setting hinge on auxiliary constrained or penalized minimization problems
	investigating the behavior of optimal transitions with prescribed height.
	Additionally, they are complemented by some numerical examples.
\end{abstract}

\paragraph{MSC2020} 49K05 (26A45, 49J05)
\paragraph{Keywords}
energy minimization, (nonlinear) total variation, regularity of minimizers, optimality conditions.

\tableofcontents

\section{Introduction}

In the framework of image processing, the total variation energy functional plays a fundamental role. Since the pioneering work of Rudin, Osher, and Fatemi \cite{ROF}, total variation regularization has been extensively studied both from the theoretical and applied viewpoints, leading to a wide literature on qualitative properties of minimizers and on nonlinear variants of the classical model (see the survey \cite{CCN_survey} and the references therein; in particular, see also \cite{Chambolle1997} and \cite{Chambolle2004}).

Given an interval $I\subseteq\setR$, a datum $g: I\to\setR$ describing the gray level of an input image, $\lambda,\gamma>0$, and $p\in [1,\infty]$, we study one-dimensional minimization problems for functionals of the form
\[
	F(u) = \TV(u;I) + \lambda\norm{u-g}_p^\gamma
\]
as well as their nonlinear counterpart obtained by replacing the classical total variation $\TV$ with a generalized functional $\TV_{\phi,\theta}$. Precisely, in \autoref{def:TV} we define $\TV(u;I)$ as the \emph{pointwise} version of the total variation of the function $u$ over the set $I$. Up to a choice of good representatives, this corresponds to the quantity $\abs{\D u}(I)$ defined for $u\in\BV(I)$ according to \cite{AFP}. For $u\in\BV(I)$, we define also the nonlinear total variation as
\[
	\TVphitheta(u;I)
	= \int_I \phi\bigl(\abs{u'}(x)\bigr)\dx + \abs{\D^cu}(I) + \sum_{x\in J_u} \theta\bigl(\abs{[u](x)}\bigr),
\]
where $\phi,\theta:[0,\infty)\to[0,\infty)$; see \autoref{def:TV_phi_theta} for the precise definition, including the requirements on $\phi$ and $\theta$.

Among the relevant questions concerning this optimization problem, we are motivated by understanding
the regularity of the minimizers and the degeneracy of the problem.
In particular, in this article we address these questions:
\begin{enumerate}[label=Q\arabic*:]
	\item Can the minimizer exhibit a Cantor part $\D^cu\neq0$?
	\item Is the minimizer $u=g$?
\end{enumerate}

We will obtain and answer to the former question as a byproduct of our investigation of the latter.
With regard to Q2, our analysis shows that the behavior of $g$ near the boundary of the domain is crucial.
More specifically, under the assumption of at least or at most $\alpha$-growth at the endpoints (see \autoref{def:alpha-growth}), we determine conditions on the parameters $p,\gamma$ which cause the input $g$ to be a minimizer for sufficiently large $\lambda$, or cause it not to be a minimizer for any $\lambda$.
The threshold $1/\gamma\lessgtr1+1/(p\alpha)$ that we find for the two behaviors is sharp.

The linear case turns out to admit a more detailed analysis. We obtain several sharp results characterizing both the failure and the validity of minimality of the datum. On the one hand, we prove that functions exhibiting sufficiently fast growth near the boundary cannot be minimizers. On the other hand, monotone or piecewise monotone data with controlled growth at the endpoints are shown to be essentially unique minimizers for sufficiently large values of the fidelity parameter $\lambda$.
Moreover, the Cantor staircase function provides an interesting example showcasing that the minimizers may exhibit only the Cantor part of the derivative, thus answering Q1 positively.

By contrast, the nonlinear setting is considerably more difficult and remains only partially understood. Although we are able to establish some preliminary structural results and derive partial analogues of the linear theory, the nonlinear nature of $\TVphitheta$ leads to significant differences and obstacles with respect to the linear problem; as a consequence, most of the techniques developed in the linear case do not extend directly to the nonlinear case, mostly due to the fact that the functional exhibits a worse behavior with respect to local truncations. To work around this issue, we resort to the auxiliary boundary value problem \eqref{eq:E-def}, which allows us to characterize optimal transitions with prescribed height. Moreover, the failure of the analogues to the linear theorems is highlighted by the behavior of the minimizers of the penalized functional \eqref{eq:penalized-functional}. Yet, our treatment of the nonlinear problem is only partial and we regard this part of the paper as mainly exploratory, with some examples suggesting that the nonlinear theory may exhibit genuinely different phenomena. This section is complemented by numerical examples to showcase and visualize some possible behaviors of the minimizers of \eqref{eq:penalized-functional}.

Our proofs are mainly based on fine constructions of competitors for the minimization problems
heavily exploiting the convexity of the functionals. In addition, we resort to a range of techniques
from calculus of variations and convex analysis, including a detailed study of Euler--Lagrange
equations (for the nonlinear problem), the notion of perspective of a convex function, Legendre
transform, and other inequalities inspired by optimal transport on the real line.

The article is organized as follows. In \autoref{sec:preliminaries} we collect the necessary
preliminaries on functions of bounded variation and introduce the nonlinear total variation
functional. \autoref{sec:linear} contains the main results for the linear model, distinguishing
between situations where the datum fails to be a minimizer and situations where it is optimal. In
\autoref{sec:nonlinear} we turn to the nonlinear setting, where we analyze the associated boundary
value problem, obtain partial results which leave some open questions, and showcase the behavior of
optimal profiles by means of numerical examples. The approach is based on insight about the
minimization problem developed in the appendix \autoref{sec:first-variation-simple}.

\subsection{Main results and open questions}

We summarize here the main results contained in the paper.

\begin{itemize}
	\item Linear total variation. After the simple and illustrative example \autoref{thm:not-min-cantor-function} with the Cantor staircase function, in \autoref{thm:not-min-alpha-growth} we show that if $g$ has sufficiently fast growth near the endpoints and $\gamma$ is large enough then $g$ is not a minimizer. In \autoref{sec:linear-min} we provide progressively more general results of minimality of the input datum: we start with the simple situation in \autoref{thm:min-flat} where $g$ is constant near the endpoints; we move on to the case in \autoref{thm:min-monotone} where $g$ is monotone and has controlled growth; finally, with the help of the fundamental \autoref{prop:u_contained_in_g}, in \autoref{thm:min-piecewise-monotone} we extend the result to piecewise monotone functions. The complementarity of the ranges for the exponent $\gamma$ in \autoref{thm:not-min-alpha-growth} and \autoref{thm:min-monotone} demonstrates the sharpness of our results.
	\item Nonlinear total variation. In \autoref{thm:not-min-alpha-growth-nonlinear} we prove the analogue of \autoref{thm:not-min-alpha-growth} under the assumption of a lower bound on the boundary value problem \eqref{eq:E-def}. \autoref{sec:nonlinear-min} is more exploratory: as it turns out surprisingly, the desired nonlinear statement corresponding to \autoref{prop:u_contained_in_g} is false, a fact which we demonstrate with a very simple counterexample in \autoref{prop:nonlinear-competitors}. The culprit is the non minimality of constant functions for a functional penalizing jumps at the right endpoint of the interval, as stated in \autoref{prop:not-min-zero-nonlinear}. We demonstrate the behavior of minimizers of this penalized problem with some numerical examples in \autoref{sec:numerical-examples}.
\end{itemize}

We stress the fact that our study of the linear case (\autoref{sec:linear}) provides more precise
statements than what could be derived from the nonlinear case (\autoref{sec:nonlinear}) by
specialising to $\phi=\theta=\Id$. This is because the \autoref{def:TV} of $\TV$ is a pointwise
version of total variation which is sensitive to pointwise modifications of the function, in
contrast with $\TVphitheta$ of \autoref{def:TV_phi_theta}, which is invariant in the $L^1$
equivalence classes.

Our partial treatment of the regularity issue leaves open some questions more advanced than Q1,
for instance whether there exists a minimizer $u\in\SBV$ when the input is $g\in\SBV$.
This could have implications in regularization of images.

Finally, in the nonlinear setting, future research may help identify sufficient conditions for the
minimality of the input, potentially based on the behavior in $0$ and at $\infty$ of the
non-linearities $\phi$ and $\theta$. While our results show that there are situations where
the input performs better that certain specific competitors (see \autoref{prop:nonlinear-competitors}),
the exposition in \autoref{sec:nonlinear-min} demonstrates that the situation is substantially more
involved than the linear case and a more detailed analysis is required to fully resolve this matter.

\section{Preliminaries}\label{sec:preliminaries}

\subsection{Definitions and basic properties of \texorpdfstring{$\BV$}{BV} functions}

\begin{definition}\label{def:TV}
	Given a subset $I\subseteq\setR$ and a function $u:I\to\setR$, the \emph{total variation} of $u$ over $I$ is
	\[
		\TV(u;I)
		= \sup\set*{\sum_{i=1}^n \abs{u(x_i)-u(x_{i-1})}}
		{x_i\in I\ \forall i\in\{0,\dots,n\} \wedge x_{i-1}\leq x_i\ \forall i\in\{1,\dots,n\}}.
	\]
	The vector space of \emph{functions with finite total variation} over $I$ is
	\[
		\TV(I) = \set{u:I\to\setR}{\TV(u;I)<\infty}.
	\]
\end{definition}

In the sequel, we may use the abbreviated notation $\TV(u)$ instead of $\TV(u;I)$ when it is clear
from the context.

\begin{definition}\label{def:regulated}
	A function $u:I\to\setR$ is said to be \emph{regulated} if the left limit $u^-(x)$ exists and is
	finite at any $x\in\setR$ which is an accumulation point of $I$ from the left, and the right limit
	$u^-(x)$ exists and is finite at any $x\in\setR$ which is an accumulation point of $I$ from the
	right.
\end{definition}

\begin{proposition}\label{prop:BV is regulated}
	Every function $u\in\TV(I)$ is regulated.
\end{proposition}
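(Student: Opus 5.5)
We prove existence of the left limit at a point $x$ that is an accumulation point of $I$ from the left; the right limit is handled symmetrically. The plan is to use the Cauchy criterion along sequences, arguing by contradiction. First we show $u$ is bounded. Fix $x_0\in I$. For any $y\in I$, the one-step partition $\{x_0,y\}$ (ordered appropriately) gives $\abs{u(y)-u(x_0)}\le\TV(u;I)$. Hence $\abs{u}\le\abs{u(x_0)}+\TV(u;I)$ on $I$, so any limit that exists will be finite.

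Next, suppose the left limit at $x$ does not exist. The set $I\cap(-\infty,x)$ accumulates at $x$, so $\liminf_{y\to x^-}u(y)<\limsup_{y\to x^-}u(y)$, with both values finite by boundedness. Choose $\varepsilon>0$ with $\limsup-\liminf>3\varepsilon$. We then construct inductively points
\[
y_1<z_1<y_2<z_2<\dots<x
\]
in $I$ such that $u(y_k)<\liminf+\varepsilon$ and $u(z_k)>\limsup-\varepsilon$. This is possible because in every left neighbourhood $(x-\delta,x)$, and in particular to the right of the last chosen point, there are points of $I$ where $u$ is close to the $\liminf$ and points where it is close to the $\limsup$. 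Each increment then satisfies $\abs{u(z_k)-u(y_k)}>\varepsilon$.

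Finally, the increasing finite partition $y_1<z_1<\dots<y_n<z_n$ is admissible in \autoref{def:TV}. It gives $\TV(u;I)\ge\sum_{k=1}^n\abs{u(z_k)-u(y_k)}>n\varepsilon$ for every $n$, contradicting $u\in\TV(I)$. So the left limit exists, and it is finite by the bound above.

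The only delicate point is the inductive choice of points: each new point must be strictly to the right of the previous one and still below $x$. This uses that $x$ is an accumulation point from the left and that the $\liminf$/$\limsup$ are approached in every left neighbourhood. The rest is routine.
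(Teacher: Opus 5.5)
Your proof is correct. Note that the paper does not prove this proposition; it states it as a standard fact. The route implicit in the text is the Jordan decomposition of \autoref{prop:further-properties}\ref{it:tv-monotone-diff}. There one writes $u=V-(V-u)$ with $V(x)=\TV(u;I\cap(-\infty,x])$ and observes that both $V$ and $V-u$ are non-decreasing and bounded; the latter holds because $V(x')\geq V(x)+\abs{u(x')-u(x)}$ for $x<x'$. One then uses that bounded monotone functions have finite one-sided limits.

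Your argument works directly from \autoref{def:TV} instead. You first get the bound $\abs{u}\leq\abs{u(x_0)}+\TV(u;I)$. You then show that $\liminf<\limsup$ at a left accumulation point would produce arbitrarily many increments larger than $\varepsilon$ along a single increasing chain of points, which contradicts finite variation.

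This route is fully self-contained: it needs neither the superadditivity of $\TV$ over adjacent pieces nor the monotone-limit theorem. It applies verbatim to an arbitrary subset $I\subseteq\setR$, as allowed in \autoref{def:regulated}, whereas \autoref{prop:further-properties} is stated only for intervals. It also gives quantitative information: along any increasing chain in $I$, fewer than $\TV(u;I)/\varepsilon+1$ pairwise disjoint consecutive increments can exceed $\varepsilon$. The Jordan-decomposition route, in exchange, provides more structure at once, namely a monotone decomposition of $u$, from which the other items of \autoref{prop:basic-properties} also follow easily.
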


\begin{definition}\label{def:regular}
	A regulated function $u:I\to\setR$ is said to be \emph{regular} if
	\[
		u(x) \in [u^-(x)\wedge u^+(x),u^-(x)\vee u^+(x)]
	\]
	for every $x\in I$ which is a bilateral accumulation point of $I$ and $u(x)=u^\pm(x)$ at any point
	$x\in I$ which is a unilateral accumulation point of $I$.
	The set of \emph{regular functions with finite total variations} over $I$ is
	\[
		\TVreg(I) = \set{u\in\TV(I)}{\text{$u$ is regular}}.
	\]
\end{definition}

The total variation $\TV(u;I)$ and the space of functions $\TV(I)$ are the pointwise version of
$\abs{\D u}(I)$ and $\BV(I)$ as defined in \cite{AFP}. More explicitly, $\TV(u;I)$ is precisely
the \emph{pointwise variation} defined in \cite[Definition 3.26]{AFP}; \cite[Theorem 3.27]{AFP}
shows that the variation $\abs{\D u}(I)$ is equal to the relaxation of $\TV(u,I)$ in the $\leb^1$
equivalence class (see also \cite[(3.23)]{AFP}); \cite[Theorem 3.28]{AFP} shows that any function
$u\in\BV(I)$ admits a \emph{good representative} $v\in\TVreg(I)$ such that $\abs{\D u}(I)=\TV(v;I)$.

The following properties follow directly from the definitions.

\begin{proposition}\label{prop:basic-properties}
	Let $I\subseteq\setR$ be an interval.
	\begin{enumerate}
		\item If $u\in\TV(I)$, then $u$ has at most countably many discontinuities (and they are jumps).
		\item If $u\in\TV(I)$, then $u$ is regular if and only if
		      $\liminf_{x'\to x}u(x')\leq u(x)\leq\limsup_{x'\to x}u(x')$ for every accumulation point $x\in I$.
		\item If $u,v\in\TVreg(I)$ coincide $\leb^1$-a.e., then they share the same set of jump points
		      and they coincide elsewhere.
		\item Every $u\in\BV(I)$ admits an equivalent function $v\in\TVreg(I)$ which coincides
		      $\leb^1$-a.e.\ with $u$.
		\item \label{it:regularizations}
		      If $u\in\TV(I)$, then a function $v:I\to\setR$ is regular and coincides with $u$
		      $\leb^1$-a.e.\ if and only if
		      $\liminf_{x'\to x}u(x')\leq v(x)\leq\limsup_{x'\to x}u(x')$ for every $x\in I$.
		\item If $u$ is regulated and $x$ is a bilateral accumulation point of $I$, then
		      $u(x) \in [u^-(x)\wedge u^+(x),u^-(x)\vee u^+(x)]$ if and only if the ``pointwise variation''
		      $\llbracket u\rrbracket = \abs{u(x)-u^-(x)}+\abs{u^+(x)-u(x)}$ is equal to the jump
		      $\abs{[u](x)} = \abs{u^+(x)-u^-(x)}$.
	\end{enumerate}
\end{proposition}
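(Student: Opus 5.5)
The plan is to verify the six items one by one, using only \autoref{def:TV}, \autoref{def:regular} and \autoref{prop:BV is regulated}; the cited results of \cite{AFP} are needed only for item (iv). Throughout, $u\in\TV(I)$ is regulated, so at every accumulation point $x$ the one-sided limits $u^\pm(x)$ exist. They play the role of $\liminf$ and $\limsup$: at a bilateral point, $\liminf_{x'\to x}u(x')=u^-(x)\wedge u^+(x)$ and $\limsup_{x'\to x}u(x')=u^-(x)\vee u^+(x)$ (limits over $x'\neq x$), while at a unilateral point both reduce to the single one-sided limit.

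\textbf{Items (i), (ii), (vi).} For (i), note that $u$ can only be discontinuous where some one-sided limit differs from $u(x)$. Consider the points where $\abs{u^+-u^-}+\abs{u-u^-}+\abs{u^+-u}>1/k$. Any finite family of such points yields a partition whose sum is at least $\tfrac1{2k}$ times their number, so each such set is finite and the union over $k$ is countable. Item (ii) is then a restatement of \autoref{def:regular} via the identification of $\liminf/\limsup$ above. For (vi), the triangle inequality gives $\abs{u(x)-u^-}+\abs{u^+-u(x)}\ge\abs{u^+-u^-}$, with equality if and only if $u(x)$ lies between $u^-$ and $u^+$. This is an elementary fact about three reals.

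\textbf{Items (iii), (v).} The key observation is that $\leb^1$-a.e.\ modifications do not change $u^\pm$, since one-sided limits can be computed along full-measure sets. Thus if $u,v$ agree a.e.\ they have the same $u^\pm$. For (iii), a point where $u^-=u^+$ is forced by regularity to satisfy $u(x)=v(x)=u^\pm(x)$, and at unilateral points the values are likewise determined. The jump points, $\{u^-\neq u^+\}$, are therefore common, and the functions coincide off them. For (v), the direction ($\Rightarrow$) follows from (ii) together with the invariance of $\liminf/\limsup$ of the limits. For ($\Leftarrow$), the assumed bounds give $v=u^\pm$ wherever $u$ is continuous, i.e.\ off a countable set by (i), so $v=u$ a.e. Then $v$ has the same one-sided limits as $u$ and is regulated. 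One must also check $v\in\TV$: $\TV(v)\le\TV(u)$ plus the jump contributions, which are controlled because $v(x)$ lies between $u^-(x)$ and $u^+(x)$. Regularity of $v$ then follows by (ii).

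\textbf{Item (iv).} Take the good representative from \cite[Theorem 3.28]{AFP}, or equivalently take any $v$ between the $\liminf$ and $\limsup$ of the a.e.-defined limits, and apply (v).

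\textbf{Main obstacle.} The only delicate point is the bookkeeping at unilateral accumulation points (the endpoints of $I$) and at isolated points of $I$. I would handle these separately, noting that for an interval $I$ the only non-bilateral points are endpoints.
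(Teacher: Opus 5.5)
Your item-by-item verification is correct and matches what the paper intends: the paper gives no proof beyond the remark that these properties follow directly from the definitions, and \cite[Theorem 3.28]{AFP} (quoted just before) supplies item (iv). One point to tighten is the ($\Leftarrow$) direction of (v), where the existence of $v^\pm$ does not follow from $v=u$ a.e.\ alone, since your ``key observation'' presupposes that both functions are already regulated; it follows instead from the sandwich bound at the countably many exceptional points $y$ together with $u^\pm(y)\to u^-(x)$ as $y\to x^-$ (and symmetrically), after which regularity of $v$ is immediate from \autoref{def:regular}, so the detour through $\TV(v)<\infty$ and (ii) is unnecessary.
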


Regulated, regular and $\TV$ functions are well behaved with regard to composition with continuous
or Lipschitz functions.

\begin{proposition}\label{prop:tv-composition}
	Let $I\subseteq\setR$.
	\begin{enumerate}
		\item If $u:I\to\setR$ is regulated and $f\in C\bigl(\overline{u(I)}\bigr)$, then $f\circ u$ is
		      regulated.
		\item If $u:I\to\setR$ is regular and $f\in C\bigl(\overline{u(I)}\bigr)$ is monotone, then
		      $f\circ u$ is regular.
		\item \label{it:lip-TV} If $u\in\TV(I)$ and $f\in\Lip\bigl(u(I)\bigr)$, then $f\circ u\in\TV(I)$ and
		      $\TV(f\circ u;I)\leq\Lip(f)\TV(u;I)$.
		\item If $u\in\TVreg(I)$ and $f\in\Lip\bigl(u(I)\bigr)$ is monotone, then $f\circ u\in\TVreg(I)$.
	\end{enumerate}
\end{proposition}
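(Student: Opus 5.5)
We prove the four items in order, using only the definitions of regulated and regular functions and of $\TV$, together with \autoref{prop:basic-properties}.

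For (i), fix $x$ that is an accumulation point of $I$ from the left. Since $u^-(x)$ exists and is finite, $u(x')\to u^-(x)$ as $x'\to x^-$. The values $u(x')$ lie in $u(I)$, so $u^-(x)\in\overline{u(I)}$. Because $f$ is continuous on $\overline{u(I)}$, we get $f(u(x'))\to f(u^-(x))$. The right limits are handled symmetrically. Hence $f\circ u$ is regulated, with $(f\circ u)^\pm(x)=f(u^\pm(x))$.

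For (ii), by (i) the function $f\circ u$ is regulated and its one-sided limits are $f(u^\pm(x))$.
\begin{itemize}
\item At a bilateral accumulation point $x$ we have $u(x)\in[u^-(x)\wedge u^+(x),u^-(x)\vee u^+(x)]$. A monotone $f$ maps the segment between $a$ and $b$ (intersected with the domain) into the segment between $f(a)$ and $f(b)$. Therefore $f(u(x))$ lies between $f(u^-(x))$ and $f(u^+(x))$.
\item At a unilateral accumulation point, $u(x)=u^\pm(x)$ gives $f(u(x))=f(u^\pm(x))$ directly.
\end{itemize}
The only care needed is that the endpoints $u^\pm(x)$ belong to $\overline{u(I)}$, where $f$ is defined. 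This was checked in (i).

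For (iii), take any partition $x_0\le\dots\le x_n$ in $I$. By the Lipschitz bound,
\[
\sum_i\abs{f(u(x_i))-f(u(x_{i-1}))}\le\Lip(f)\sum_i\abs{u(x_i)-u(x_{i-1})}\le \Lip(f)\TV(u;I).
\]
Taking the supremum over partitions gives the claim, and in particular $f\circ u\in\TV(I)$. Only the values of $f$ on $u(I)$ are used, so no extension of $f$ is needed.

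For (iv), we need to apply (ii), which requires $f$ to be continuous and monotone on $\overline{u(I)}$. This is the one small obstacle, and it is resolved as follows.
\begin{itemize}
\item A Lipschitz function on $u(I)$ is uniformly continuous, so it extends uniquely to a Lipschitz function $\bar f$ on $\overline{u(I)}$ with the same constant.
\item Monotonicity passes to the limit, so $\bar f$ is still monotone.
\item $u\in\TV(I)$ is regulated by \autoref{prop:BV is regulated}.
\end{itemize}
Applying (ii) to $\bar f$ shows $\bar f\circ u=f\circ u$ is regular, and (iii) gives finite total variation. Hence $f\circ u\in\TVreg(I)$.

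The same extension issue appears in (i) and (ii). There $f$ is already assumed continuous on the closure, so only the observation $u^\pm(x)\in\overline{u(I)}$ is required.
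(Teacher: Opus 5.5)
Your proposal is correct and matches the paper: for (iii) you use the same termwise Lipschitz bound on partition sums followed by a supremum over partitions, which is the only item the paper proves explicitly. The paper calls (i), (ii) and (iv) straightforward, and your details for them are sound: $(f\circ u)^\pm(x)=f(u^\pm(x))$ because $u^\pm(x)\in\overline{u(I)}$, monotonicity preserves the betweenness condition, and in (iv) $f$ extends to a monotone Lipschitz function on $\overline{u(I)}$.
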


\begin{proof}
	Since the rest are straightforward, we prove only \ref{it:lip-TV}, which will play a crucial role
	in what follows.
	\begin{itemize}
		\item[(iii)] For every choice of sorted points $x_0<\dots<x_n$ in $I$ we have
			\[
				\sum_{i=1}^n \abs*{f\bigl(u(x_i)\bigr)-f\bigl(u(x_{i-1})\bigr)}
				\leq \Lip(f) \sum_{i=1}^n \abs{u(x_i)-u(x_{i-1})}
				\leq \Lip(f) \TV(u;I);
			\]
			taking the supremum on the left hand side proves the thesis. \qedhere
	\end{itemize}
\end{proof}

As a special application of the last two points of \autoref{prop:tv-composition} we get the
following useful property.

\begin{remark}\label{rmk:BV-truncation}
	Given $m,M\in\setR$, the functions $f_m,f^M,f_m^M\in\Lip(\setR)$ defined as
	\begin{align*}
		f_m(t)   & = t\vee m,        &
		f^M(t)   & = t\wedge M,      &
		f_m^M(t) & = t\vee m\wedge M
	\end{align*}
	are $1$-Lipschitz and monotone, therefore if $u\in\TV(I)$ then the truncated functions
	$u\vee m$, $u\wedge M$ and $u\vee m\wedge M$ are also in $\TV(I)$ and
	\[
		\TV(u\vee m;I),\ \TV(u\wedge M;I),\ \TV(u\vee m\wedge M;I) \leq \TV(u;I).
	\]
	If $u\in\TVreg(I)$, the truncated functions are also in $\TVreg(I)$.
\end{remark}

We collect here some further elementary properties of the pointwise total variation.

\begin{proposition}\label{prop:further-properties}
	Let $I\subseteq\setR$ be an interval and $u\in\TV(I)$. Then the following hold true.
	\begin{enumerate}
		\item \label{it:tv-monotone-diff} $u$ can be expressed as the difference of the two
		      non-decreasing functions $x\mapsto\TV(u;(-\infty,x])$ and $x\mapsto\TV(u;(-\infty,x])-u(x)$;
		\item $\TV(u;[a,c])=\TV(u;[a,b])+\TV(u,[b,c])$ for every $a\leq b\leq c$ in $I$;
		\item $\TV(u;[a,c])=\abs{u(a)-u^+(a)}+\TV(u,(a,c])$ for every $a<c$ in $I$;
		\item \label{it:regular-better} if $\tilde u:I\to\setR$ is such that
		      \[
			      \liminf_{y\to x} u(y) \leq \tilde u(x) \leq \limsup_{y\to x} u(y),
		      \]
		      then $\TV(\tilde u;I) \leq \TV(u;I)$;
	\end{enumerate}
\end{proposition}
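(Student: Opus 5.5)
Each of the four items follows directly from \autoref{def:TV}, working with finite sorted partitions and passing to the supremum. The one substantive argument is in \ref{it:regular-better}; the other three are standard manipulations.

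For \ref{it:tv-monotone-diff}, set $V(x)=\TV(u;(-\infty,x])$, where the variation is taken over $I\cap(-\infty,x]$. Monotonicity of $V$ is immediate, since partitions of a smaller set are partitions of a larger one. For $x<y$ in $I$, appending the point $y$ to any partition of $I\cap(-\infty,x]$ gives
\[
V(y)\geq V(x)+\abs{u(y)-u(x)}\geq V(x)+u(y)-u(x).
\]
Hence $V-u$ is non-decreasing, and $u=V-(V-u)$.

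For (ii), the inequality $\geq$ comes from concatenating a partition of $[a,b]$ with one of $[b,c]$. For $\leq$, insert $b$ into an arbitrary partition of $[a,c]$; by the triangle inequality this does not decrease the sum, and the refined partition splits into one of $[a,b]$ and one of $[b,c]$.

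For (iii), apply (ii) to $[a,t]$ and $[t,c]$ with $t\downarrow a$. We get $\TV(u;[t,c])\to\TV(u;(a,c])$, because any partition of $(a,c]$ is eventually contained in $[t,c]$. We also get $\TV(u;[a,t])\to\abs{u(a)-u^+(a)}$. To see the latter, write $\TV(u;[a,t])=\abs{u(t')-u(a)}$-type terms plus $\TV(u;(a,t])$, and note that $\TV(u;(a,t])\to0$ by finiteness of the total variation, which can be seen via (ii) and monotone convergence of $V$. Combined with the existence of $u^+(a)$ from \autoref{prop:BV is regulated}, this gives the claim.

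The main step is \ref{it:regular-better}. Fix sorted points $x_0\le\dots\le x_n$ in $I$ and $\varepsilon>0$. Since $\tilde u(x_i)$ lies between $\liminf_{y\to x_i}u(y)$ and $\limsup_{y\to x_i}u(y)$, and $u$ is regulated by \autoref{prop:BV is regulated}, we have
\[
\tilde u(x_i)\in\bigl[\min\{u^-(x_i),u(x_i),u^+(x_i)\},\,\max\{u^-(x_i),u(x_i),u^+(x_i)\}\bigr],
\]
with the one-sided limits omitted at endpoints. So $\tilde u(x_i)$ is a convex combination of at most two of these three values. The plan is to replace each $x_i$ by a small cluster of points $y<x_i<z$ chosen close enough to $x_i$ that $u(y)\approx u^-(x_i)$ and $u(z)\approx u^+(x_i)$ within $\varepsilon/n$. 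The clusters are disjoint and remain sorted; if consecutive $x_i$ coincide, the same cluster is reused.

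Two facts then finish the argument:
\begin{enumerate}
	\item For each $i$, the term $\abs{\tilde u(x_i)-\tilde u(x_{i-1})}$ is bounded by the variation of $u$ along the path $z_{i-1}$, $y_i$ together with detours through $x_{i-1}$ or $x_i$, up to $O(\varepsilon/n)$. This uses the elementary fact that $\abs{a-b}\le\abs{a-c}+\abs{c-b}$. In particular, a point lying within the range $[m,M]$ of the three values at $x_i$ is dominated: for $\alpha\in[m,M]$, the distance from $\alpha$ to an external value $\beta$ is at most the variation $\abs{\beta-p}+\abs{p-q}$ obtained by ordering the cluster so that its visit passes through $\alpha$'s bracketing values.
	\item Consequently $\sum\abs{\tilde u(x_i)-\tilde u(x_{i-1})}\le\TV(u;I)+\varepsilon$. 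Letting $\varepsilon\to0$ and taking the supremum gives the claim.
\end{enumerate}

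The delicate point is the case where $\tilde u(x_i)$ lies between $u^-(x_i)$ and $u^+(x_i)$ but not between those and $u(x_i)$ in the natural left-to-right order. This must be checked by a short case analysis on the ordering of $u^-(x_i)$, $u(x_i)$, $u^+(x_i)$. The check consists in showing that the cluster's contribution, $\abs{u(y)-u(x_i)}+\abs{u(x_i)-u(z)}$, is at least the triangle-inequality cost of passing through $\tilde u(x_i)$. This holds because the path $u^-\to u\to u^+$ covers the whole interval $[\min,\max]$, which contains $\tilde u(x_i)$.
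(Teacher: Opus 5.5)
Your proposal is correct. The paper proves only \ref{it:regular-better} and dismisses (i)--(iii) as standard or elementary. Your arguments for those are fine, up to two small precisions:
\begin{itemize}
	\item In (i), appending $y$ to an arbitrary partition of $I\cap(-\infty,x]$ only produces $\abs{u(y)-u(x_n)}$. You should first add $x$ as the last point, which is harmless by the triangle inequality, and then append $y$.
	\item In (iii), the estimate you need is $\TV(u;[a,t])\le\sup_{s\in(a,t]}\abs{u(s)-u(a)}+\TV(u;(a,t])$. It must be combined with $\TV(u;(a,t])=V(t)-\lim_{s\downarrow a}V(s)\to0$ as $t\downarrow a$, which is what additivity and monotonicity of $V$ give.
\end{itemize}

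For \ref{it:regular-better} you use the same skeleton as the paper. You refine the partition by small clusters of points around each $x_i$. You route each increment $\abs{\tilde u(x_i)-\tilde u(x_{i-1})}$ through values of $u$ at the neighbouring clusters by the triangle inequality. You then regroup the detour terms cluster by cluster. The difference lies in the choice of clusters and in the local inequality.

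The paper takes two points $x^\pm_{i,\eps}$ near $x_i$ directly from the $\liminf$/$\limsup$ hypothesis, with $u(x^-_{i,\eps})-\eps\le\tilde u(x_i)\le u(x^+_{i,\eps})+\eps$. Then $\tilde u(x_i)$ lies almost between two sampled values, and the detour costs at most $4\eps$. It needs neither the one-sided limits nor the value $u(x_i)$.

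You instead use \autoref{prop:BV is regulated} to pass to $u^\pm(x_i)$ and insert $x_i$ itself ($y_i<x_i<z_i$). You then observe that the path $u^-(x_i)\to u(x_i)\to u^+(x_i)$ sweeps the whole hull of the three values. Hence its length is at least $\abs{u^-(x_i)-\tilde u(x_i)}+\abs{\tilde u(x_i)-u^+(x_i)}$.

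The paper's route is leaner: two points per cluster and only the definitions of $\liminf$ and $\limsup$. Yours isolates the geometric content: moving the value at a point into the hull of $\{u^-,u,u^+\}$ cannot increase the local contribution $\abs{u-u^-}+\abs{u^+-u}$. It therefore directly covers the case where $\tilde u(x_i)$ is only required to lie in that hull.

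One wording issue: drop the talk of ``ordering the cluster'' in your item~1, since the order $y_i<x_i<z_i$ is fixed. What is actually used is precisely the inequality $\abs{u(y_i)-\tilde u(x_i)}+\abs{\tilde u(x_i)-u(z_i)}\le\abs{u(y_i)-u(x_i)}+\abs{u(x_i)-u(z_i)}+O(\eps/n)$, which you justify correctly in your last paragraph.
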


\begin{proof}
	Property \ref{it:tv-monotone-diff} is a well known fact;
	the next two splitting properties are elementary;
	therefore, we only prove \ref{it:regular-better}.
	\begin{enumerate}
		\item[(iv)]
			Let $x_0<x_1<\dots<x_n$ be a sorted partition of $I$. Given $\eps>0$, for every $i\in\{0,\dots,n\}$ there exist distinct points
			$x_{i,\eps}^\pm\in I$ such that
			\begin{align}
				u(x_{i,\eps}^-)-\eps & \leq \tilde u(x_i) \leq u(x_{i,\eps}^+)+\eps &  & \forall i\in\{0,\dots,n\}, \label{eq:inf-sup-points} \\
				x_{i-1,\eps}^\pm     & < x_{i,\eps}^\pm                             &  & \forall i\in\{1,\dots,n\}. \notag
			\end{align}
			For every $i\in\{0,\dots,n\}$, let $x_{i,\eps}'<x_{i,\eps}''$ such that
			$\{x_{i,\eps}',x_{i,\eps}''\}=\{x_{i,\eps}^-,x_{i,\eps}^+\}$.
			The sequence of points $x_{0,\eps}'<x_{0,\eps}''<x_{1,\eps}'<\dots<x_{n,\eps}'<x_{n,\eps}''$ forms
			a partition of $I$.
			By the triangle inequality and \eqref{eq:inf-sup-points} we get
			\[
				\begin{split}
					\abs{u(x_{i,\eps}')-\tilde u(x_i)} + \abs{\tilde u(x_i)-u(x_{i,\eps}'')}
					&= \abs{u(x_{i,\eps}^+)-\tilde u(x_i)} + \abs{\tilde u(x_i)-u(x_{i,\eps}^-)} \\
					&\leq \abs*{\bigl(u(x_{i,\eps}^+)+\eps\bigr)-\tilde u(x_i)}
					+ \abs*{\tilde u(x_i)-\bigl(u(x_{i,\eps}^-)-\eps\bigr)} + 2\eps \\
					&= \abs*{\bigl(u(x_{i,\eps}^+)+\eps\bigr)-\bigl(u(x_{i,\eps}^-)-\eps\bigr)} + 2\eps \\
					&\leq \abs{u(x_{i,\eps}^+)-u(x_{i,\eps}^-)} + 4\eps
					= \abs{u(x_{i,\eps}')-u(x_{i,\eps}'')} + 4\eps,
				\end{split}
			\]
			from which we obtain
			\[
				\begin{split}
					\sum_{i=1}^n \abs{\tilde u(x_{i-1})-\tilde u(x_i)}
					&\leq \sum_{i=1}^n \left(\abs{\tilde u(x_{i-1})-u(x_{i-1,\eps}'')}
					+ \abs{u(x_{i-1,\eps}'')-u(x_{i,\eps}')}
					+ \abs{u(x_{i,\eps}')-\tilde u(x_i)}\right) \\
					&\leq \sum_{i=0}^n \left(\abs{u(x_{i,\eps}')-\tilde u(x_i)} + \abs{\tilde u(x_i)-u(x_{i,\eps}'')}\right)
					+ \sum_{i=1}^n \abs{u(x_{i-1,\eps}'')-u(x_{i,\eps}')} \\
					&\leq \sum_{i=0}^n \abs{u(x_{i,\eps}')-u(x_{i,\eps}'')}
					+ \sum_{i=1}^n \abs{u(x_{i-1,\eps}'')-u(x_{i,\eps}')} + 4(n+1)\eps.
				\end{split}
			\]
			By definition of total variation, the right hand side is less than or equal to $\TV(u;I)+4(n+1)\eps$.
			Sending $\eps\to0$ and taking the supremum of the left hand side over the partition $(x_i)_{0\leq i\leq n}$ we deduce the thesis. \qedhere
	\end{enumerate}
\end{proof}

We recall the classical decomposition $\D u=\D^au+\D^cu+\D^ju$ of the derivative of a function
$u\in\BV(I)$; see \cite[Corollary 3.33]{AFP}. The (linear) total variation of a function $u\in\TVreg(I)$ can be represented as
\[
	\TV(u;I) = \int_I \abs{u'(x)}\dx + \abs{\D^cu}(I) + \sum_{x\in J_u} \abs{[u](x)},
\]
where $J_u$ is the jump set of $u$ and $[u](x)=g^+(x)-g^-(x)$ is the jump value.
This leads us to introduce the nonlinear total variation $\TVphitheta(u;I)$ as follows.

\begin{definition}\label{def:TV_phi_theta}
	Given two non-decreasing functions $\phi,\theta:[0,\infty)\to[0,\infty)$ such that $\phi$ is convex,
	$\theta$ is concave, $\phi(0)=\theta(0)=0$ and
	\[
		\lim_{t\to\infty} \frac{\phi(t)}t = \lim_{t\to0^+} \frac{\theta(t)}t = \theta'(0) = 1,
	\]
	the \emph{nonlinear total variation} of $u\in\TV(I)$ is
	\[
		\TVphitheta(u;I)
		= \int_I \phi\bigl(\abs{u'}(x)\bigr)\dx + \abs{\D^cu}(I) + \sum_{x\in J_u} \theta\bigl(\abs{[u](x)}\bigr).
	\]
\end{definition}

It is simple to verify that the following conditions are equivalent:
\begin{align*}
	 & \phi(t)=t,\ \forall t\in[0,\infty) &
	 & \Leftrightarrow                    &
	 & \exists t\in(0,\infty): \phi(t)=t  &
	 & \Leftrightarrow                    &
	 & \phi'(0)=1.
\end{align*}

Our main theorems rely on the following property of the input datum near the endpoints.
The precise assumption will be anyway recalled explicitly in their statements.

\begin{definition}\label{def:alpha-growth}
	Given $\alpha>0$, a function $u:(a,b)\to\setR$ which admits the limit $g^+(a)$ is said to have
	\emph{at most $\alpha$-growth} at the endpoint $a$ if there exists $C>0$ such that
	$\abs{g(x)-g^+(a)}\leq C(x-a)^\alpha$ for $x\in(a,b)$. It is said to have \emph{at least $\alpha$-growth}
	if the reverse inequality holds.
\end{definition}

\subsection{Technical results}

\begin{definition}
	Given a measurable function $u:\Omega\to[0,\infty]$,
	let $\mu_u:[0,\infty]\to[0,\abs\Omega]$ be its \emph{super-level set measure}
	\[
		\mu_u(t) = \abs{\set{x\in\Omega}{u(x)>t}}.
	\]
	This function is non-increasing and lower semi-continuous, therefore it admits a non-increasing
	lower semi-continuous pseudo-inverse $(\mu_t)^{-1}:[0,\abs\Omega]\to[0,\infty]$ given by
	\[
		(\mu_u)^{-1}(s) = \inf\set{t}{\mu_u(t)\leq s}.
	\]
	We define the \emph{non-increasing} and \emph{non-decreasing rearrangements}
	$u_\dagger,u^\dagger:[0,\abs\Omega]\to[0,\infty]$ of $u$ as
	\[
		u_\dagger(s) = (\mu_u)^{-1}(s), \qquad
		u^\dagger(s) = (\mu_u)^{-1}(\abs\Omega-s), \qquad
		\forall s\in[0,\abs\Omega].
	\]
\end{definition}

The functions $\mu_u$ and $(\mu_u)^{-1}$ are related by the crucial property
\[
	\mu_u(t) \leq s \Longleftrightarrow (\mu_u)^{-1}(s) \leq t.
\]
Moreover, one can show that $\mu_{u_\dagger}=\mu_{u^\dagger}=\mu_u$ (see \autoref{rmk:rearrangement-ot},
where it is implicitly proven), therefore by the layer-cake representation we have
\[
	\int_\Omega f\bigl(u(x)\bigr) \dx
	= \int_0^{\abs\Omega} f\bigl(u_\dagger(s)\bigr) \d s
	= \int_0^{\abs\Omega} f\bigl(u^\dagger(s)\bigr) \d s
\]
for every $f:[0,\infty]\to[0,\infty]$ measurable.

The following proposition collects two contractivity properties of the monotone rearrangement, both
with respect to the $L^p$ norm and the (linear) total variation. The proof is based upon two
technical lemmas, which we postpone. See also \cite{Talenti2016} for a survey about rearrangements.

\begin{proposition}\label{prop:rearrangement-contraction}
	Let $p\in[1,\infty]$ and $u,v:(a,b)\to[0,\infty]$. Then
	\begin{align}
		\norm{u^\dagger-v^\dagger}_p     & \leq \norm{u-v}_p,
		\label{eq:monotone-rearrangement-contraction-Lp}                \\
		\TV\bigl(u^\dagger;(0,b-a)\bigr) & \leq \TV\bigl(u;(a,b)\bigr).
		\label{eq:monotone-rearrangement-contraction-TV}
	\end{align}
	\begin{subequations}
		For $p>1$, equality in \eqref{eq:monotone-rearrangement-contraction-Lp} holds if and only if
		\begin{equation}\label{eq:monotone-rearrangement-contraction-Lp-eq-cond}
			\bigl(u(x_2)-u(x_1)\bigr) \bigl(v(x_2)-v(x_1)\bigr) \geq 0, \qquad
			\text{for almost every } x_1,x_2\in(a,b);
		\end{equation}
		for $p=1$, equality in \eqref{eq:monotone-rearrangement-contraction-Lp} holds if and only if
		\begin{equation}\label{eq:monotone-rearrangement-contraction-L1-eq-cond}
			\max\{u(x_1),v(x_2)\} \geq \min\{u(x_2),v(x_1)\}, \qquad
			\text{for almost every } x_1,x_2\in(a,b).
		\end{equation}
	\end{subequations}
	Equality in \eqref{eq:monotone-rearrangement-contraction-TV} holds if and only if $u$ is monotone,
\end{proposition}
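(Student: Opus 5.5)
The plan is to prove the two inequalities separately, together with their equality cases, using two lemmas: an $L^p$ rearrangement inequality and a one-dimensional coarea/layer-cake representation of the total variation.

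\emph{The $L^p$ contraction.} For $p<\infty$ I would reduce to the layer-cake identity for $\abs{u-v}^p$. Writing $\abs{s-t}^p=\int\!\!\int \mathbf 1_{\{\min(s,t)\le r<r'\le\max(s,t)\}}\,c_p(r'-r)\,\d r\,\d r'$ (valid for $p>1$ with $c_p(h)=p(p-1)h^{p-2}$; for $p=1$ one uses the single-layer formula $\abs{s-t}=\int\abs{\mathbf 1_{s>r}-\mathbf 1_{t>r}}\,\d r$) and integrating over $x$, one gets
\[
	\norm{u-v}_p^p=\int\!\!\int c_p(r'-r)\Bigl(\abs{\{u>r\}\setminus\{v>r'\}}+\abs{\{v>r\}\setminus\{u>r'\}}\Bigr)\cdots ,
\]
or more concretely $\abs{A\setminus B}\ge(\abs A-\abs B)^+$ for $A=\{u>r\}$ and $B=\{v>r'\}$. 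The right-hand side depends only on $\mu_u,\mu_v$, and equality holds for $u^\dagger,v^\dagger$, because their super-level sets are nested intervals $(b-a-\mu(r),b-a)$; here I use $\mu_{u^\dagger}=\mu_u$. This gives \eqref{eq:monotone-rearrangement-contraction-Lp}. The case $p=\infty$ follows by letting $p\to\infty$, or directly from $\{u>r\}\subseteq\{v>r-\norm{u-v}_\infty\}$ up to null sets. For equality, the chain is an equality precisely when for a.e.\ pair of levels the super-level sets of $u$ and $v$ are nested. For $p>1$ (all pairs $r<r'$ weighted), this is equivalent to co-monotonicity \eqref{eq:monotone-rearrangement-contraction-Lp-eq-cond}: if $u(x_2)>u(x_1)$ and $v(x_2)<v(x_1)$ on positive-measure sets, choosing levels in between breaks nesting. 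For $p=1$ only the diagonal-type layers matter, and unwinding "$\{u>r\}\subseteq\{v>r\}$ or conversely, for a.e.\ $r$" pointwise yields \eqref{eq:monotone-rearrangement-contraction-L1-eq-cond}.

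\emph{The TV contraction.} Here I would use the one-dimensional coarea formula: $\TV(u)\ge\int_0^\infty \#\partial^*\{u>t\}\,\d t$, where for a pointwise function the count of "crossings" of level $t$ is at least the number of boundary points of the level set in $(a,b)$. A monotone non-decreasing $u^\dagger$ has level sets that are intervals $(b-a-\mu(t),b-a)$. Each of them contributes at most one crossing, and only for $t$ in the range $\inf u<t<\sup u$, so $\TV(u^\dagger)\le\sup u-\inf u$ (essential values, up to boundary values). Since any $u$ attains essentially values near $\operatorname{ess\,sup}$ and $\operatorname{ess\,inf}$, we have $\TV(u)\ge\sup u-\inf u$, which proves \eqref{eq:monotone-rearrangement-contraction-TV}.

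\emph{Equality cases and the main obstacle.} The care needed is that $u^\dagger$ is the lower semicontinuous version, so its pointwise variation equals $\operatorname{ess\,sup}-\operatorname{ess\,inf}$ exactly. For the equality case: if $\TV(u)=\operatorname{osc}u$, then $u$ has no "back-and-forth" on any partition, which forces monotonicity; conversely, a monotone $u$ has variation equal to its oscillation. I expect the main obstacle to be this equality characterization in the pointwise setting, and in particular the exceptional points where $u$ is not regular, and the precise $p=1$ condition. I would handle the former via \autoref{prop:further-properties}\ref{it:regular-better}, and the latter by a careful pointwise unwinding of the level-set nesting.
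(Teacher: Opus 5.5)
\paragraph{A gap in the TV equality case.} The equality case in \eqref{eq:monotone-rearrangement-contraction-TV} is not proved. From $\TV(u)=\sup u-\inf u$ you infer that $u$ has ``no back-and-forth on any partition'', but this does not follow. If the values of $u$ zigzag along a partition $P$, \autoref{lem:c-monotonicity} only gives $\sum_P\abs{\Delta u}>\max_P u-\min_P u$. Since $\max_P u-\min_P u$ may be much smaller than $\sup u-\inf u$, this does not contradict $\TV(u)=\sup u-\inf u$.

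The missing idea is to put the witnesses of non-monotonicity in the same partition as near-extremal points. Take $x_1<x_2$ with $u(x_1)<u(x_2)$ and $x_3<x_4$ with $u(x_3)>u(x_4)$, together with $x'_\eps,x''_\eps$ satisfying $u(x'_\eps)\le\inf u+\eps$ and $u(x''_\eps)\ge\sup u-\eps$. The strict excess must then survive $\eps\to0^+$. The paper does this as follows:
\begin{itemize}
\item only finitely many orderings of these six points exist, so one ordering can be fixed along a sequence $\eps_n\to0^+$;
\item one passes to the limit in the values, which are bounded when $\TV(u)<\infty$;
\item the strict case of \autoref{lem:c-monotonicity} applies to the limiting sequence, which is still non-monotone and whose range is exactly $\sup u-\inf u$.
\end{itemize}
\autoref{prop:further-properties}\ref{it:regular-better} plays no role here.

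For the converse, a monotone $u$ needs not only $\TV(u)=\sup u-\inf u$ but also $\sup u-\inf u=\esssup u-\essinf u$. This holds on an open interval and is what $\TV(u^\dagger;(0,b-a))$ equals.

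Your coarea detour is unnecessary and points the wrong way: a lower bound on $\TV$ cannot bound $\TV(u^\dagger)$ from above. Since $u^\dagger$ is monotone, $\TV(u^\dagger;(0,b-a))=\esssup u-\essinf u$ directly. The inequality is then the paper's argument $\TV(u)\ge\sup u-\inf u\ge\esssup u-\essinf u$.

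\paragraph{The $L^p$ part: a valid alternative route.} Your $L^p$ argument differs from the paper's and works once the sets are fixed. For $r<r'$, the condition $\min\{u,v\}\le r<r'\le\max\{u,v\}$ corresponds, up to null sets of levels, to $x\in\{u>r'\}\setminus\{v>r\}$ or $x\in\{v>r'\}\setminus\{u>r\}$. It does not correspond to $\{u>r\}\setminus\{v>r'\}$: with your sets, $u=v$ would give a positive right-hand side. With this fix, use that $\abs{A\setminus B}\ge(\abs A-\abs B)^+$, with equality iff $A,B$ are a.e.\ nested, and that $\{u^\dagger>r\}=(b-a-\mu_u(r),b-a)$. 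This yields the inequality and both equality characterizations. For $p=1$ it gives exactly \eqref{eq:monotone-rearrangement-contraction-L1-eq-cond}, which the paper only calls analogous.

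The paper instead discretizes into simple functions on equal cells, sorts by transpositions using \autoref{lem:two-point-inequality}, passes to the limit, and gets strictness from a local swap near Lebesgue points. Your route avoids the approximation and the convergence $u_n^\dagger\to u^\dagger$, and treats $p=1$ and $p>1$ uniformly. The paper's route is more elementary and ties in with $c$-cyclical monotonicity.

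\paragraph{The case $p=\infty$.} Neither argument covers equality for $p=\infty$, and none can. Take $u(x)=x$ on $(0,1)$, and let $v=0.1-x$ on $(0,0.1)$, $v=x$ on $[0.1,1/2]$, $v=x+10$ on $(1/2,1)$. Both sides of \eqref{eq:monotone-rearrangement-contraction-Lp} equal $10$, yet \eqref{eq:monotone-rearrangement-contraction-Lp-eq-cond} fails on $(0,0.1)^2$. The characterization should therefore be read for $1<p<\infty$.
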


\begin{proof}
	For the convenience of the reader, we provide a sketch of the proof of the classical result
	\eqref{eq:monotone-rearrangement-contraction-Lp} for the case $(a,b)=(0,1)$.
	See also \cite[Theorem 2.8(c)]{Baernstein} with $\Psi(x,y)=\abs{x-y}^p$
	or \cite[Theorem 1.3.1, Corollary 1.3.2]{Rakotoson} with $\rho(x,y)=\abs{x-y}^p$ for independent
	and more complete proofs.
	By approximation with simple functions, let
	\begin{align*}
		u_n & = \sum_{i=1}^n u_{n,i} \bm1_{\left(\frac{i-1}n,\frac in\right)}, &
		v_n & = \sum_{i=1}^n v_{n,i} \bm1_{\left(\frac{i-1}n,\frac in\right)}.
	\end{align*}
	Let $\sigma:\{1,\dots,n\}\to\{1,\dots,n\}$ be a permutation that rearranges the values
	$(u_{n,i})_{i=1}^n$ in non-decreasing order, i.e.\ $u_{n,\sigma(i)} \leq u_{n,\sigma(i+1)}$ for
	every $i\in\{1,\dots,n-1\}$. Suppose that $i\mapsto v_{n,\sigma(i)}$ is not non-decreasing: then
	there must be $j\in\{1,\dots,n-1\}$ such that $v_{n,\sigma(j)}>v_{n,\sigma(j+1)}$.
	Let $\pi$ be the transposition that swaps $j$ and $j+1$.
	By applying \autoref{lem:two-point-inequality} below with
	\begin{align*}
		a & = u_{n,\sigma(j)},   &
		A & = u_{n,\sigma(j+1)}, &
		b & = v_{n,\sigma(j+1)}, &
		B & = v_{n,\sigma(j)},
	\end{align*}
	we get that
	\[
		\norm{u_n-v_n}_p^p
		= \sum_{i=1}^n \abs{u_{n,i}-v_{n,i}}^p
		= \sum_{i=1}^n \abs{u_{n,\sigma(i)}-v_{n,\sigma(i)}}^p
		\geq \sum_{i=1}^n \abs{u_{n,\sigma(i)}-v_{n,\pi\circ\sigma(i)}}^p.
	\]
	The number of inversions in the sequence $(v_{n,\pi\circ\sigma(i)})_{i=1}^n$ is one less than that
	in $(v_{n,\sigma(i)})_{i=1}^n$. By iterating this argument, we find a permutation $\tau$ such that
	$(v_{n,\tau(i)})_{i=1}^n$ is non-decreasing and
	\[
		\norm{u_n-v_n}_p^p
		\geq \sum_{i=1}^n \abs{u_{n,\sigma(i)}-v_{n,\tau(i)}}^p
		= \norm{u_n^\dagger-v_n^\dagger}_p^p.
	\]
	The proof is concluded letting $u_n\to u$ and $v_n\to v$, for which we have also
	$u_n^\dagger\to u^\dagger$ and $v_n^\dagger\to v^\dagger$.

	The proof of the equality condition for \eqref{eq:monotone-rearrangement-contraction-Lp} is
	reminiscent of the proof of $c$-cyclical monotonicity of optimal plans; see for instance
	\cite[Theorem 3.17]{AmbrosioBrueSemola}. Suppose $p>1$ and
	assume by contradiction that \eqref{eq:monotone-rearrangement-contraction-Lp-eq-cond} is violated.
	Without loss of generality, we may assume that $x_1<x_2$, $u(x_1)<u(x_2)$ and $v(x_1)>v(x_2)$,
	with $x_1,x_2$ Lebesgue points of both $u$ and $v$.
	For sufficiently small radius $r>0$, define the intervals $I_i^r=(x_i-r,x_i+r)$ and the rearranged function
	$v_r:(a,b)\to\setR$ given by
	\[
		v_r(x) = \begin{cases}
			v\bigl(x+(x_2-x_1)\bigr) & x\in I_1^r,       \\
			v\bigl(x-(x_2-x_1)\bigr) & x\in I_2^r,       \\
			v(x)                     & \text{otherwise}.
		\end{cases}
	\]
	By \autoref{lem:two-point-inequality} with $a=u(x_1)$, $A=u(x_2)$, $b=v(x_2)$, $B=v(x_1)$ and
	$f(t)=\abs{t}^p$ we have
	\[
		\begin{split}
			\abs{u(x_1)-v(x_1)}^p + \abs{u(x_2)-v(x_2)}^p
			&> \abs{u(x_1)-v(x_2)}^p + \abs{u(x_2)-v(x_1)}^p \\
			&= \abs{u(x_1)-v_r(x_1)}^p + \abs{u(x_2)-v_r(x_2)}^p.
		\end{split}
	\]
	By the Lebesgue property, this implies that for $r$ sufficiently small
	\[
		J_r \coloneqq
		\dashint_{-r}^r \sum_{i=1}^2 \bigl( \abs{u(x_i+y)-v(x_i+y)}^p - \abs{u(x_i+y)-v_r(x_i+y)}^p \bigr) \dy
		> 0.
	\]
	Then
	\[
		\norm{u-v}_p^p - \norm{u-v_r}_p^p
		= \int_{I_1^r\cup I_2^r} \bigl(\abs{u(x)-v(x)}^p-\abs{u(x)-v_r(x)}^p\bigr) \dx
		= 2r J_r > 0.
	\]
	This proves that
	\[
		\norm{u-v}_p > \norm{u-v_r}_p \geq \norm{u^\dagger-v_r^\dagger}_p = \norm{u^\dagger-v^\dagger}_p,
	\]
	hence \eqref{eq:monotone-rearrangement-contraction-Lp} is a strict inequality.
	The proof of \eqref{eq:monotone-rearrangement-contraction-L1-eq-cond} for the case $p=1$ is
	analogous.

	Let us now turn to the proof of \eqref{eq:monotone-rearrangement-contraction-TV}.
	Given $\eps>0$, let $x'_\eps,x''_\eps\in(a,b)$ be two different points such that
	\[
		u(x'_\eps) \leq \inf_{(a,b)} u + \eps
		\qquad\text{and}\qquad
		u(x''_\eps) \geq \sup_{(a,b)} u - \eps.
	\]
	By \autoref{def:TV},
	\[
		\TV\bigl(u;(a,b)\bigr)
		\geq \abs{u(x'_\eps)-u(x''_\eps)}
		\geq \sup_{(a,b)} u - \inf_{(a,b)} u - 2\eps.
	\]
	Letting $\eps\to0$ shows that
	\begin{equation}\label{eq:tv-sup-inf}
		\TV\bigl(u;(a,b)\bigr) \geq \sup_{(a,b)} u - \inf_{(a,b)} u.
	\end{equation}
	On the other hand,
	\[
		\TV\bigl(u^\dagger;(0,b-a)\bigr)
		= \sup_{(0,b-a)} u^\dagger - \inf_{(0,b-a)} u^\dagger
		= \esssup_{(a,b)} u - \essinf_{(a,b)} u
		\leq \sup_{(a,b)} u - \inf_{(a,b)} u,
	\]
	therefore we deduce the thesis.
	Let us now prove that if equality in \eqref{eq:tv-sup-inf} holds then $u$ is monotone.
	We prove the contrapositive. Assume that $u$ is not monotone: there exist $x_1<x_2$ and $x_3<x_4$
	such that $u(x_1)<u(x_2)$ and $u(x_3)>u(x_4)$. For every $\eps>0$, let $x'_\eps,x''_\eps$ as before.
	Let $S_\eps=\{x_1,x_2,x_3,x_4,x'_\eps,x''_\eps\}$.
	Rename the points as $(y_0^\eps,\dots,y_5^\eps)=(x_1,x_2,x_3,x_4,x'_\eps,x''_\eps)$.
	There exists a permutation $\sigma_\eps$ of $\{0,\dots,5\}$ such that $z_i^\eps = y_{\sigma_\eps(i)}^\eps$
	is increasing in $i$. Since the set of permutations is finite, there exists $\eps_n\to0^+$ such
	that $\sigma_{\eps_n}$ is always the same permutation $\sigma$.

	Let $a_i^{\eps_n}=u(y_i^{\eps_n})$, hence $u(z_i^{\eps_n})=a_{\sigma(i)}^{\eps_n}$.
	When $n\to\infty$, the finite sequence $(a^{\eps_n}_i)_{i=0}^5$ converges to some $(a_i)_{i=0}^5$
	with the property that $(a_{\sigma(i)})_{i=0}^5$ is not monotone, because of the presence of the
	points $x_1,x_2,x_3,x_4$ which cause a violation of monotonicity.
	From $\sup u-\eps\leq u(x''_\eps)\leq\max_i a_i^\eps\leq\sup u$ we deduce
	$\lim_{n\to\infty} \max_i a_i^{\eps_n}=\sup u$, and similarly for $\min$ and $\inf$.
	By \autoref{lem:c-monotonicity} below we therefore have
	\[
		\begin{split}
			\TV(u)
			&\geq \liminf_{n\to\infty} \TV(u;S_{\eps_n})
			= \liminf_{n\to\infty} \sum_{i=1}^5 \abs*{u(z_i^{\eps_n})-u(z_{i-1}^{\eps_n})}
			= \liminf_{n\to\infty} \sum_{i=1}^5 \abs*{u(y_{\sigma(i)}^{\eps_n})-u(y_{\sigma(i-1)}^{\eps_n})} \\
			&= \lim_{n\to\infty} \sum_{i=1}^5 \abs*{a_{\sigma(i)}^{\eps_n}-a_{\sigma(i-1)}^{\eps_n}}
			= \sum_{i=1}^5 \abs*{a_{\sigma(i)}-a_{\sigma(i-1)}}
			> \max_i a_i - \min_i a_i
			= \sup_{(a,b)} u - \inf_{(a,b)} u,
		\end{split}
	\]
	therefore the inequality in \eqref{eq:tv-sup-inf} is strict.
\end{proof}

\begin{remark}\label{rmk:rearrangement-ot}
	Inequality \eqref{eq:monotone-rearrangement-contraction-Lp} in the previous
	\autoref{prop:rearrangement-contraction} is closely related to the theory of optimal transport on
	the real line with a convex cost.
	Indeed, introduce the probabilities $\mu=u_\#\bigl(\leb^1\res(0,1)\bigr),
		\nu=v_\#\bigl(\leb^1\res(0,1)\bigr) \in \Prob\bigl([0,\infty)\bigr)$.
	We claim that $\mu=(u_\dagger)_\#\bigl(\leb^1\res(0,1)\bigr)$, and similarly for $\nu$.
	It is sufficient to prove that the measures coincide on the set $(t,\infty)$ for every $t>0$.
	We have
	\[
		u^{-1}\bigl((t,\infty)\bigr) = \set{x\in(0,1)}{u(x)>t}
	\]
	and
	\[
		\begin{split}
			(u_\dagger)^{-1}\bigl((t,\infty)\bigr)
			&= \set{s\geq0}{u_\dagger(s)>t}
			= \set{s\geq0}{(\mu_u)^{-1}(s)>t} \\
			&= \set{s\geq0}{\mu_u(t)>s}
			= \bigl[0,\mu_u(t)\bigr),
		\end{split}
	\]
	therefore
	\[
		\mu\bigl((t,\infty)\bigr)
		= \leb^1\bigl(u^{-1}(t,\infty)\bigr)
		= \leb^1\bigl(\set{x\in(0,1)}{u(x)>t}\bigr)
		= \mu_u(t)
	\]
	and
	\[
		(u_\dagger)_\#\bigl(\leb^1\res(0,1)\bigr)\bigl((t,\infty)\bigr)
		= \leb^1\bigl((u_\dagger)^{-1}\bigl((t,\infty)\bigr)\bigr)
		= \leb^1\bigl([0,\mu_u(t))\bigr)
		= \mu_u(t).
	\]

	Define now the transport plans $\pi,\pi_\dagger \in\Pi(\mu,\nu)$ given by
	\begin{align*}
		\pi         & = (u,v)_\#\bigl(\leb^1\res(0,1)\bigr),                 &
		\pi_\dagger & = (u_\dagger,v_\dagger)_\#\bigl(\leb^1\res(0,1)\bigr).
	\end{align*}
	The plan $\pi_\dagger$ is supported on a monotone graph%
	\footnote{A set $\Gamma\subset\setR^2$ such that $(x_2-x_1)(y_2-y_1)\geq0$ for every
		$(x_1,y_1),(x_1,y_2)\in\Gamma$.},
	hence it is $c$-cyclically monotone with respect to the cost $c(x,y)=\abs{x-y}^p$,
	therefore it is optimal
	(see \cite[Proposition 4.5]{AmbrosioBrueSemola} where the converse is proven, although it is
	actually an equivalence; observe that their inequality (4.5) there is related to our
	\autoref{lem:two-point-inequality}; see also \cite[Theorem 1.11]{AmbrosioBrueSemola} for the
	optimality of the monotone rearrangement). This implies that
	\[
		\begin{split}
			\norm{u_\dagger-v_\dagger}_p^p
			&= \int_0^1 \abs{u_\dagger(z)-v_\dagger(z)}^p \dz
			= \int_{[0,\infty)^2} \abs{x-y}^p \d\pi_\dagger(x,y) \\
			&\leq \int_{[0,\infty)^2} \abs{x-y}^p \d\pi(x,y)
			= \int_0^1 \abs{u(z)-v(z)}^p \dz
			= \norm{u-v}_p^p.
		\end{split}
	\]
	With regard to the equality condition,
	if the inequality \eqref{eq:monotone-rearrangement-contraction-Lp} is saturated and $p>1$,
	then the plan $\pi$ must be supported on a monotone graph too,
	which translates to \eqref{eq:monotone-rearrangement-contraction-Lp-eq-cond}.
\end{remark}

\begin{lemma}\label{lem:two-point-inequality}
	If $f:\setR\to\setR$ is a convex function and $a\leq A$ and $b\leq B$, then
	\[
		f(b-a) + f(B-A) \leq f(B-a) + f(b-A).
	\]
	Moreover, the inequality is strict if $f$ is strictly convex, $a<A$ and $b<B$.

	In particular, for every $p\geq1$ we have $\abs{b-a}^p + \abs{B-A}^p \leq \abs{B-a}^p + \abs{b-A}^p$.
	If $p>1$ the inequality is strict if and only if $a<A$, and $b<B$.
	For $p=1$, the inequality is strict if and only if $a<A$, $b<B$, $a<B$ and $b<A$, which is
	equivalent to $\max\{a,b\}<\min\{A,B\}$.
\end{lemma}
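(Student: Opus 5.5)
The natural route is to reduce everything to the single increment $h(s)=f(s+\delta)-f(s)$, where $\delta$ denotes the gap between the two smaller terms. The plan is to rewrite both sides of the inequality as values of $f$ at points with a common spacing and then exploit the monotonicity of difference quotients of a convex function.

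Set $x_1=b-A$, $x_2=b-a$, $x_3=B-A$, $x_4=B-a$. From $a\le A$ and $b\le B$ we get $x_1\le x_2$, $x_1\le x_3$, $x_2\le x_4$, $x_3\le x_4$, and moreover $x_1+x_4=x_2+x_3$. The claim reads $f(x_2)+f(x_3)\le f(x_1)+f(x_4)$. Put $\delta=A-a\ge0$, so that $x_2=x_1+\delta$ and $x_4=x_3+\delta$. The inequality then becomes $f(x_1+\delta)-f(x_1)\le f(x_3+\delta)-f(x_3)$ with $x_1\le x_3$, which says exactly that $s\mapsto f(s+\delta)-f(s)$ is non-decreasing. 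This holds for convex $f$ by the three-slope (chord) inequality. If $\delta=0$ or $x_1=x_3$, the two sides are equal.

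For strictness, suppose $f$ is strictly convex, $a<A$ and $b<B$. Then $\delta>0$ and $x_3-x_1=B-b>0$, and strict monotonicity of chord slopes for a strictly convex function gives a strict increase of $h$. The main case analysis is therefore needed only in the ``in particular'' statements.

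For $p>1$, $t\mapsto\abs t^p$ is strictly convex, so strictness follows from the above when $a<A$ and $b<B$. Conversely, if $a=A$ or $b=B$, then $\delta=0$ or $x_1=x_3$ and equality holds.

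For $p=1$, $f(t)=\abs t$ is affine on each half-line, so $h(s)=\abs{s+\delta}-\abs s$ is constant ($=-\delta$) for $s\le-\delta$, constant ($=\delta$) for $s\ge0$, and strictly increasing on $[-\delta,0]$. Hence $h(x_1)<h(x_3)$ if and only if $\delta>0$, $x_1<x_3$, and the intervals $[x_1,x_3]$ and $(-\delta,0)$ overlap, i.e.\ $x_1<0$ and $x_3>-\delta$. Translating back, $x_1<0$ means $b<A$, and $x_3>-\delta$ means $B-A>a-A$, i.e.\ $a<B$. Together with $a<A$ and $b<B$, this is the set of four conditions, which is clearly equivalent to $\max\{a,b\}<\min\{A,B\}$.

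The only delicate point is the exact characterization of strictness for $p=1$, i.e.\ checking precisely when the increment of $\abs{\cdot}$ changes between $x_1$ and $x_3$. Boundary cases, where one of the four inequalities is an equality, need care; they are handled by the fact that $h$ is constant on each of $(-\infty,-\delta]$ and $[0,\infty)$.
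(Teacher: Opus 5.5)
Your proof is correct and is essentially the paper's argument. Both reduce the claim to the monotonicity of the forward difference $s\mapsto f(s+\delta)-f(s)$ of a convex function: you use shift $A-a$ and base points $b-A\le B-a$ replaced by $b-A\le B-A$, while the paper uses shift $B-b$ and base points $b-A\le b-a$, which is the same inequality rearranged. For $p=1$ you compute the increment $\abs{s+\delta}-\abs{s}$ explicitly, whereas the paper argues that equality in its chain of chord slopes forces $\abs{\cdot}$ to be affine on $(b-A,B-a)$; both give the same condition $b-A<0<B-a$.
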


\begin{proof}
	Let $x\coloneqq b-A\leq b-a\eqqcolon X$ and $\Delta b=B-b\geq 0$.
	The convexity of $f$ implies the monotonicity of forward differences, therefore we have
	\[
		f(x+\Delta b)-f(x) \leq f(X+\Delta b)-f(X),
	\]
	that is
	\[
		f(B-A)-f(b-A) \leq f(B-a) - f(b-a),
	\]
	which is equivalent to the thesis.
	The inequality is strict if $f$ is strictly convex, $x<X$ and $\Delta b>0$, which are the stated conditions.

	The inequality for $p\geq1$ and the characterization of strict inequality for $p>1$ follow using
	$f(t)=\abs{t}^p$.

	Let us turn to the characterization of strict inequality for $p=1$, $f(t)=\abs{t}$.
	If $x=X$ or $\Delta b=0$, then equality holds. Let $x<X,\Delta b>0$.
	By the monotonicity of difference quotients,
	\[
		\frac{f(x+\Delta b)-f(x)}{\Delta b}
		\leq \frac{f(X+\Delta b)-f(x)}{X+\Delta b-x}
		\leq \frac{f(X+\Delta b)-f(X)}{\Delta b},
	\]
	where equality is verified if and only if $f$ is affine in $(x,X+\Delta b)$,
	Therefore, the strict inequality holds if and only if $0\in(x,X+\Delta)$, which amounts to the
	required property on $a,A,b,B$.
\end{proof}

The next lemma is a classical rearrangement inequality, whose proof can be found in any textbook
about optimal transport, in relation to the $c$-cyclical monotonicity with respect to the cost $\abs{x-y}$,
see e.g.\ \cite{AmbrosioBrueSemola}.

\begin{lemma}\label{lem:c-monotonicity}
	Let $n\in\setN_+$ and let $a=(a_i)_{i=0}^n$ be a finite sequence of real numbers.
	For every permutation $\sigma\in\mathcal{P}_n$ we have
	\[
		\sum_{i=1}^n \abs{a_{\sigma(i)}-a_{\sigma(i-1)}}
		\geq \max_i a_i - \min_i a_i,
	\]
	with equality if and only if $(a_{\sigma(i)})_{i=0}^n$ is monotone.
\end{lemma}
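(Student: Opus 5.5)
We prove both claims by induction on $n$, keeping track of when each step is an equality. Write $b_i=a_{\sigma(i)}$ for $i=0,\dots,n$, so that $(b_i)$ is a reordering of $(a_i)$ and the left-hand side is $S=\sum_{i=1}^n\abs{b_i-b_{i-1}}$. We have $\max_i a_i=\max_i b_i$ and $\min_i a_i=\min_i b_i$. Hence it suffices to show that $S\geq\max_i b_i-\min_i b_i$ for an arbitrary finite sequence, with equality exactly when $(b_i)$ is monotone.

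\emph{The inequality.} Pick indices $j$ and $k$ with $b_j=\max_i b_i$ and $b_k=\min_i b_i$; by symmetry take $j\leq k$. The telescoping triangle inequality gives
\[
	\max_i b_i-\min_i b_i=\abs{b_j-b_k}\leq\sum_{i=j+1}^{k}\abs{b_i-b_{i-1}}\leq S.
\]

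\emph{Monotone implies equality.} If $(b_i)$ is monotone, every difference $b_i-b_{i-1}$ has the same sign. The sum therefore telescopes to $\abs{b_n-b_0}$. For a monotone sequence the endpoints $b_0$ and $b_n$ are the extreme values, so $\abs{b_n-b_0}=\max_i b_i-\min_i b_i$.

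\emph{Equality implies monotone.} We argue by contrapositive. If $(b_i)$ is not monotone, there is a strict ascent and a strict descent, and hence a turning index. Concretely, there is an $m\in\{1,\dots,n-1\}$ such that $b_{m-1}<b_m$ and $b_m>b_{m+1}$, or the mirror configuration holds. To find it, take consecutive strictly ascending and strictly descending steps, skipping any flat steps in between; with flat steps present one instead takes $b_m$ to be the value on the plateau, and the computation below is unchanged. We compare $S$ with the sum obtained after deleting $b_m$. Using
\[
	\abs{b_m-b_{m-1}}+\abs{b_{m+1}-b_m}>\abs{b_{m+1}-b_{m-1}},
\]
we get $S>S'$, where $S'$ is the total variation of the sequence with $b_m$ removed.

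Deleting $b_m$ does not change the range when $b_m$ is not the unique maximum. If $b_m$ is the unique maximum, we instead directly compare the strict gain with the lost range. Let $M'$ be the maximum of the remaining terms. Then
\[
	\abs{b_m-b_{m-1}}+\abs{b_m-b_{m+1}}-\abs{b_{m+1}-b_{m-1}}=2\bigl(b_m-\max\{b_{m-1},b_{m+1}\}\bigr)\geq2(b_m-M')>b_m-M'.
\]
So in both cases
\[
	S> S'+(\text{range loss})\geq \max_i b_i-\min_i b_i,
\]
where the last step uses the inequality already proven for the shorter sequence. The minimum is handled symmetrically.

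The main obstacle is this strictness bookkeeping: plateaus around the turning point, and the case where the deleted term is itself the extreme value. A cleaner way to handle both is to first compress runs of equal consecutive values. This changes neither $S$ nor the range nor monotonicity. After compressing, every non-monotone sequence has a strict local extremum in its interior, and the computation above applies directly.
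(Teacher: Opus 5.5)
Your proof is correct, and it differs from the paper, which gives no argument at all. The paper calls the lemma a classical rearrangement inequality and refers to the optimal-transport literature on $c$-cyclical monotonicity for the cost $\abs{x-y}$.

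Your argument is elementary and self-contained. The inequality is the triangle inequality along the block of indices between a maximizer and a minimizer. Strictness for non-monotone sequences comes from deleting a strict interior local extremum and comparing the gain $2\bigl(b_m-\max\{b_{m-1},b_{m+1}\}\bigr)$ with the possible loss of range. This yields directly the equality characterization, which is what the proof of \autoref{prop:rearrangement-contraction} actually uses. For the non-strictly convex cost $\abs{x-y}$, general transport statements do not hand you that equality case for free.

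Two small points. First, your sentence about plateaus in the middle is not correct as written. Deleting one point of a plateau $b_{m-1}<b_m=b_{m+1}>b_{m+2}$ gives zero gain, so the compression of consecutive repeated values in your last paragraph is needed, not just cosmetic. With compression in place, your case analysis is sound, including the unique-maximum case where $2(b_m-M')>b_m-M'$.

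Second, the equality case can be read off directly from your first step, with no deletion, compression or induction. Suppose $S=\max_i b_i-\min_i b_i$ with $j\le k$. Then both inequalities in $b_j-b_k\le\sum_{i=j+1}^k\abs{b_i-b_{i-1}}\le S$ are equalities. The second forces $b_i=b_{i-1}$ for $i\le j$ and for $i>k$. The first forces $b_{i-1}\ge b_i$ for $j<i\le k$. Hence $(b_i)$ is non-increasing, and symmetrically non-decreasing when $k<j$.
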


\begin{proposition}\label{prop:global-truncation}
	Let $g\in\BV\bigl((a,b)\bigr)$ and let $\lambda,\gamma>0$, $p\in(0,\infty]$.
	If $u\in\BV\bigl((a,b)\bigr)$ minimizes the functional
	\[
		F(u) = \TV(u) + \lambda\norm{u-g}_p^\gamma
	\]
	and $m\leq g\leq M$ then also $m\leq u\leq M$.
\end{proposition}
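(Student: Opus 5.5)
We compare $u$ with its truncation $\tilde u = u\vee m\wedge M$ and show that $\tilde u$ is a strictly better competitor unless $\tilde u=u$.

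\emph{Total variation.} By \autoref{rmk:BV-truncation}, $\tilde u\in\TV$ (and regular if $u$ is), with $\TV(\tilde u)\leq\TV(u)$. The same holds for $\abs{\D\tilde u}\leq\abs{\D u}$ if one works in the $L^1$ class, since truncation is $1$-Lipschitz.

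\emph{Fidelity.} Since $m\leq g\leq M$, the map $t\mapsto t\vee m\wedge M$ is the projection onto $[m,M]$, which is $1$-Lipschitz and fixes $g(x)$. Hence
\[
	\abs{\tilde u(x)-g(x)}\leq\abs{u(x)-g(x)}
\]
pointwise. More precisely, on $\{u>M\}$ we have $\abs{\tilde u-g}=M-g<u-g=\abs{u-g}$, strictly, and similarly on $\{u<m\}$. Thus $\norm{\tilde u-g}_p\leq\norm{u-g}_p$ for every $p\in(0,\infty]$, and so $F(\tilde u)\leq F(u)$.

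\emph{Strictness.} Suppose $E=\{u>M\}\cup\{u<m\}$ has positive Lebesgue measure. For $p<\infty$, the pointwise strict inequality on $E$ gives $\int\abs{\tilde u-g}^p<\int\abs{u-g}^p$; since $\gamma>0$, this yields $\norm{\tilde u-g}_p^\gamma<\norm{u-g}_p^\gamma$ and hence $F(\tilde u)<F(u)$, contradicting minimality. So $m\leq u\leq M$ almost everywhere, and for the good representative everywhere, because the liminf/limsup bounds in \autoref{prop:basic-properties} propagate. If instead $u$ is a pointwise function in $\TV$ whose violation set is $\leb^1$-null, then $\tilde u=u$ a.e., so the fidelity terms agree. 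It then suffices to show $\TV(\tilde u)<\TV(u)$: if $u(x_0)>M$ at some point while $u\leq M$ a.e., then $x_0$ is an isolated excess, and truncating it strictly reduces the pointwise variation by splitting at $x_0$ (the sum $\abs{u(x_0)-u^-}+\abs{u^+-u(x_0)}$ decreases). This is the \emph{main obstacle}, and it also matters at the endpoints.

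\emph{Case $p=\infty$.} This is the delicate case: the strict pointwise inequality on $E$ does not by itself decrease the sup norm. The plan is to replace the pure truncation with a combination of the truncation and a TV argument. Since $\{u>M\}$ has positive measure, the truncation strictly lowers the total variation unless $u$ is constant on the region above $M$ in a way that still contributes oscillation. Concretely, $\TV(u)\geq\TV(\tilde u)+\TV\bigl((u-M)^+\bigr)+\TV\bigl((m-u)^+\bigr)$ by the coarea/layer decomposition of the variation. Here $\TV\bigl((u-M)^+\bigr)>0$ unless $(u-M)^+$ is constant. If $(u-M)^+$ is a positive constant on all of $(a,b)$, then $u>M$ everywhere and $u-c$ is a better competitor for small $c$. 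So $\TV(\tilde u)<\TV(u)$, which gives strictness in every case.
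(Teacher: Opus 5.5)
Your proposal is correct. It agrees with the paper for $p<\infty$ and is more careful than the paper elsewhere. The paper's proof has two steps: truncate, use \autoref{rmk:BV-truncation} for $\TV(\tilde u)\leq\TV(u)$, and assert that $\norm{\tilde u-g}_p\leq\norm{u-g}_p$ holds strictly unless $m\leq u\leq M$.

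That strictness claim only holds for $p<\infty$, and even then it only gives the bounds $\leb^1$-a.e. For $p=\infty$, which the statement explicitly allows, it fails. Take $(a,b)=(0,1)$, $g=\bm1_{(1/2,1)}$, $m=0$, $M=1$, and $u=1$ on $(0,1/2)$, $u=3/2$ on $(1/2,1)$. Then $u>M$ on a set of positive measure, yet $\norm{\tilde u-g}_\infty=\norm{u-g}_\infty=1$.

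Your dichotomy closes this gap. Either $(u-M)^+$ is non-constant, and then the identity $\TV(u)=\TV(\tilde u)+\TV\bigl((u-M)^+\bigr)+\TV\bigl((m-u)^+\bigr)$ gives a strict drop in total variation. Or $u$ is a constant above $M$, and then the fidelity drops strictly. For the pointwise variation, the identity is elementary: $t\mapsto t\vee m\wedge M$, $(t-M)^+$ and $-(m-t)^+$ are non-decreasing and sum to $t$, and common refinements of partitions do the rest. In the example above, this is exactly what happens: $\TV(u)=1/2>0=\TV(\tilde u)$.

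The dichotomy also works for every $p\in(0,\infty]$ and every pointwise representative at once, since it only needs one point $x_0$ with $u(x_0)>M$. So your separate case analysis is not needed.

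Some wording should be tightened if you keep the current layout.
\begin{enumerate}
\item In the null-set paragraph, ``isolated excess'' is inaccurate. What you actually use is $u^\pm(x_0)\leq M$, which follows from $u\leq M$ a.e.
\item The remark about endpoints is moot, because the domain $(a,b)$ is open.
\item The closing sentence ``so $\TV(\tilde u)<\TV(u)$, which gives strictness in every case'' should note that in the constant case it is the fidelity, not the total variation, that decreases strictly.
\end{enumerate}
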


\begin{proof}
	Let $\tilde u = u\vee m\wedge M$.
	Then $\TV(\tilde u)\leq\TV(u)$ by \autoref{rmk:BV-truncation}, and $\norm{\tilde u-g}_p\leq\norm{u-g}_p$,
	with this latter inequality being strict unless $m\leq u\leq M$.
\end{proof}

\begin{proposition}\label{prop:increasing-minimizer}
	Let $g\in\BV\bigl((a,b)\bigr)$ be a non-decreasing function and $\lambda,\gamma>0$, $p\in[1,\infty]$.
	If $u\in\BV\bigl((a,b)\bigr)$ minimizes the functional
	\[
		F(u) = \TV(u) + \lambda\norm{u-g}_p^\gamma
	\]
	then it is non-decreasing.
\end{proposition}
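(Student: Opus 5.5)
The plan is to compare a minimizer $u$ with its non-decreasing rearrangement and use \autoref{prop:rearrangement-contraction}. Since $g\in\BV$, it is bounded, so \autoref{prop:global-truncation} gives $m\leq u\leq M$ with $m=\inf g$, $M=\sup g$. Adding a constant to $u$ and $g$ changes nothing, so we may assume $u,g\geq0$; the rearrangement results, stated for nonnegative functions, then apply. We also replace $u$ by a good representative in $\TVreg$. This does not increase $\TV$ by \autoref{prop:further-properties}\ref{it:regular-better}, and it leaves the fidelity term unchanged. Since $u$ is a minimizer, the two values of $\TV$ must in fact coincide.

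Define the competitor $\tilde u(x)=u^\dagger(x-a)$ on $(a,b)$. Because $g$ is non-decreasing, $g^\dagger(\cdot-a)$ agrees with $g$ almost everywhere. Then \eqref{eq:monotone-rearrangement-contraction-Lp} gives $\norm{\tilde u-g}_p=\norm{u^\dagger-g^\dagger}_p\leq\norm{u-g}_p$, and \eqref{eq:monotone-rearrangement-contraction-TV} gives $\TV(\tilde u)\leq\TV(u)$. Hence $F(\tilde u)\leq F(u)$. Minimality of $u$ forces $F(\tilde u)=F(u)$, and since both pieces are individually non-increasing, both inequalities must be equalities: $\TV(u^\dagger)=\TV(u)$ and $\norm{u^\dagger-g^\dagger}_p=\norm{u-g}_p$ (assuming $F(u)<\infty$, which holds because $g$ is a competitor).

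The equality case of \eqref{eq:monotone-rearrangement-contraction-TV} in \autoref{prop:rearrangement-contraction} then says that $u$ is monotone. It remains to rule out $u$ being non-increasing and non-constant, and this is the main obstacle.

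If $u$ is non-increasing, we use the $L^p$ equality condition with $v=g$. For $p>1$, \eqref{eq:monotone-rearrangement-contraction-Lp-eq-cond} states that $u$ and $g$ are similarly ordered almost everywhere. If both were strictly non-constant, with $u$ non-increasing and $g$ non-decreasing, we could find $x_1<x_2$ with $u(x_1)>u(x_2)$ and $g(x_1)<g(x_2)$. This violates the condition, so either $u$ is constant (hence non-decreasing) or $g$ is constant.

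The case $g$ constant is handled directly: \autoref{prop:global-truncation} gives $u=g$. For $p=1$ and $p=\infty$ the equality condition is weaker, or not available, so the monotonicity of $u$ cannot be read off directly.

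A cleaner route that avoids equality conditions is to compare $u$ non-increasing with the constant competitor $c=u(x_0)$, where $x_0$ is the point at which $u-g$ changes sign. Indeed $u-g$ is non-increasing, so such a point exists. We have $\abs{c-g}\leq\abs{u-g}$ pointwise: for $x<x_0$ one has $g\leq g(x_0)$ and $u(x)\geq c$, and symmetrically for $x>x_0$. A careful sign check is needed here, choosing $c$ between $u$ and $g$ at each point. Moreover $\TV(c)=0<\TV(u)$ unless $u$ is constant. This contradicts minimality unless $u$ is constant, which works for every $p\in[1,\infty]$.
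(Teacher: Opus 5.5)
Your argument is correct once one detail is repaired, and its second half takes a different route from the paper's. The first half is the same in both: compare $u$ with $u^\dagger(\cdot-a)$, saturate \eqref{eq:monotone-rearrangement-contraction-Lp} and \eqref{eq:monotone-rearrangement-contraction-TV}, and use the equality case of the latter to conclude that $u$ is monotone.

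To exclude a non-increasing $u$, the paper first treats constant $g$ separately. It then invokes the equality conditions \eqref{eq:monotone-rearrangement-contraction-Lp-eq-cond} for $p>1$ and \eqref{eq:monotone-rearrangement-contraction-L1-eq-cond} for $p=1$. The $p=1$ case needs an extra limiting argument at the endpoints based on \autoref{prop:global-truncation}. That route reuses equality cases already stated in \autoref{prop:rearrangement-contraction}, but at the cost of a case split in $p$.

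Your constant competitor replaces all of this with a pointwise comparison. It is uniform in $p\in[1,\infty]$ and also covers constant $g$. In particular it handles $p=\infty$, which the paper groups with $p>1$. However, \eqref{eq:monotone-rearrangement-contraction-Lp-eq-cond} rests on strict convexity of $\abs{t}^p$ and is false for the sup norm. To see this, start from a non-decreasing pair and reverse $v$ on a short interval where $\abs{u-v}$ stays below its supremum: this creates discordant pairs while keeping equality. Your middle paragraph on $p>1$ therefore becomes unnecessary.

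The detail to fix is that $c=u(x_0)$ is not always admissible. Take $g(x)=x$ on $(0,2)$, with $u=3$ on $(0,1]$ and $u=0.9$ on $(1,2)$. The sign change is at $x_0=1$, and $c=u(1)=3$ gives $\abs{c-g}>\abs{u-g}$ just to the right of $1$. Choose instead
\[
	c\in\bigl[\max\{g^-(x_0),u^+(x_0)\},\ \min\{u^-(x_0),g^+(x_0)\}\bigr].
\]
This interval is nonempty for two reasons. First, $g^-(x_0)\le g^+(x_0)$ and $u^+(x_0)\le u^-(x_0)$ by monotonicity. Second, $g^-(x_0)\le u^-(x_0)$ and $u^+(x_0)\le g^+(x_0)$ by the sign of $u-g$ on either side of $x_0$. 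If $u-g$ does not change sign, only the relevant one-sided constraints remain. Then $g\le c\le u$ to the left of $x_0$ and $u\le c\le g$ to the right, so $\abs{c-g}\le\abs{u-g}$ away from $x_0$, as you need.

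Finally, passing to a good representative is superfluous, since no later step uses regularity. It would also force you to transfer the conclusion back to the original $u$, so simply run the argument on $u$ itself.
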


\begin{proof}
	If $g$ is constant, then the only minimizer is $u=g$, which achieves $F(u)=0$.
	Indeed, if $u$ is any other constant function, then $\norm{u-g}_p>0$; otherwise, if $u$ is not
	constant, then $\TV(u)>0$.

	We may now assume that $g$ is not constant; therefore, there exist $x_1<x_2$ such that
	$g(x_1)<g(x_2)$.
	Let $\bar u(x)=u^\dagger(x-a)$ be the non-decreasing rearrangement of $u$ suitably translated to
	preserve the same domain of definition. Since $g$ is already non-decreasing, we have
	$g(x)=g^\dagger(x-a)$.
	By \eqref{eq:monotone-rearrangement-contraction-Lp} and \eqref{eq:monotone-rearrangement-contraction-TV}
	of \autoref{prop:rearrangement-contraction} we have
	\begin{gather*}
		\norm{\bar u-g}_{L^p((a,b))}
		= \norm{u^\dagger-g^\dagger}_{L^p((0,b-a))}
		\leq \norm{u-g}_{L^p((a,b))}, \\
		\TV\bigl(\bar u;(a,b)\bigr)
		= \TV\bigl(u^\dagger;(0,b-a)\bigr)
		\leq \TV\bigl(u;(a,b)\bigr),
	\end{gather*}
	and both inequalities must be saturated.
	By the equality conditions stated in \autoref{prop:rearrangement-contraction},
	the latter forces $u$ to be monotone. We wish to prove that $u$ is actually non-decreasing.
	By contradiction, suppose there exist $x_3<x_4$ such that $u(x_3)>u(x_4)$. Since $u$ is monotone,
	this forces it to be non-increasing. Let $x'=\min\{x_1,x_3\}$ and $x''=\max\{x_2,x_4\}$.
	Then for any points $y_1\leq x'$ and $y_2\geq x''$ we have
	\begin{gather*}
		g(y_1) \leq g(x') \leq g(x_1) < g(x_2) \leq g(x'') \leq g(y_2), \\
		u(y_1) \geq u(x') \geq u(x_1) > u(x_2) \geq u(x'') \geq u(y_2),
	\end{gather*}
	from which $\bigl(u(y_2)-u(y_1)\bigr)\bigl(g(y_2)-g(y_1)\bigr)<0$.
	In the case $p>1$,
	this fact contradicts the equality condition \eqref{eq:monotone-rearrangement-contraction-Lp-eq-cond}.

	We now treat separately the case $p=1$. By \autoref{prop:global-truncation}, we have
	\[
		m \coloneqq \inf_{(a,b)} g \leq u \leq \sup_{(a,b)} g \eqqcolon M.
	\]
	In the limit $y_1\to a^+$ and $y_2\to b^-$ we have $g(y_1)<u(y_1)$ and $u(y_2)<g(y_2)$ because
	\[
		\begin{split}
			\lim_{y_1\to a^+} \bigl(u(y_1)-g(y_1)\bigr)
			&= \lim_{y_1\to a^+} \bigl[ \bigl(u(y_2)-g(y_1)\bigr) + \bigl(u(y_1)-u(y_2)\bigr) \bigr] \\
			&\geq \bigl(u(x_1)-u(x_2)\bigr) + \lim_{y_1\to a^+} \bigl(u(y_2)-g(y_1)\bigr) \\
			&\geq \bigl(u(x_1)-u(x_2)\bigr) + \bigl(u(y_2)-m\bigr)
			\geq u(x_1)-u(x_2)
			> 0
		\end{split}
	\]
	and similarly
	\[
		\lim_{y_2\to b^-} \bigl(g(y_2)-u(y_2)\bigr)
		\geq u(x_1)-u(x_2)
		> 0.
	\]
	Therefore there is a non-negligible set of points $y_1,y_2$ which violate the inequality
	\eqref{eq:monotone-rearrangement-contraction-L1-eq-cond}.
\end{proof}

The next lemma contains the explicit calculation of an integral which will be useful to estimate the
fidelity term from below in \autoref{lem:fidelity lower bound}.

\begin{lemma}\label{lem:holder-integral}
	Let $a\in\setR$ and $h,C,\alpha\in\setR_+$. Then
	\[
		\int_a^{a+(h/C)^{1/\alpha}} \bigl(h-C(x-a)^\alpha\bigr)^p \dx
		= K(p,\alpha) C^{-1/\alpha} h^{p+1/\alpha},
	\]
	where
	\[
		K(p,\alpha) = \frac{\Beta(p+1,1/\alpha)}\alpha
		= \frac{\Gamma(p+1)\Gamma(1/\alpha)/\alpha}{\Gamma(p+1/\alpha+1)}
		= \frac{\Gamma(p+1)\Gamma(1/\alpha+1)}{\Gamma(p+1/\alpha+1)}
		= \binom{p+1/\alpha}{p}^{-1}
	\]
	and
	\begin{equation}\label{eq:beta}
		\Beta(x,y) = \int_0^1 t^{x-1}(1-t)^{1-y} \d t = \Gamma(x)\Gamma(y)/\Gamma(x+y)
	\end{equation}
	is the Beta function.
\end{lemma}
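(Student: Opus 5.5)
The plan is a direct computation: an affine change of variables reduces the integral to a Beta integral, and the remaining identities for $K(p,\alpha)$ are standard Gamma-function manipulations.

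First, translate and rescale. Set $x=a+(h/C)^{1/\alpha}s^{1/\alpha}$, so that $C(x-a)^\alpha=hs$ and $s$ runs over $[0,1]$ as $x$ runs over $[a,a+(h/C)^{1/\alpha}]$. The differential is
\[
	\dx=(h/C)^{1/\alpha}\tfrac1\alpha s^{1/\alpha-1}\d s.
\]
The integrand becomes $h^p(1-s)^p$, so the integral equals
\[
	\frac{h^{p+1/\alpha}C^{-1/\alpha}}{\alpha}\int_0^1 s^{1/\alpha-1}(1-s)^{p}\d s
	=\frac{\Beta(1/\alpha,p+1)}{\alpha}\,C^{-1/\alpha}h^{p+1/\alpha}.
\]
The Beta function is symmetric, so this is the claimed formula with $K(p,\alpha)=\Beta(p+1,1/\alpha)/\alpha$.

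Next, derive the chain of expressions for $K$. Apply $\Beta(x,y)=\Gamma(x)\Gamma(y)/\Gamma(x+y)$, then use $\Gamma(1/\alpha)/\alpha=\Gamma(1/\alpha+1)$. Finally, read the generalized binomial coefficient as
\[
	\binom{p+1/\alpha}{p}=\frac{\Gamma(p+1/\alpha+1)}{\Gamma(p+1)\Gamma(1/\alpha+1)}.
\]

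There is no real obstacle. The only points to check are these:
\begin{itemize}
	\item The change of variables is a legitimate monotone $C^1$ bijection on $(0,1)$. The singularity of $s^{1/\alpha-1}$ at $0$ is integrable since $1/\alpha>0$, and $p+1>0$ is needed for convergence at $s=1$.
	\item The displayed definition \eqref{eq:beta} contains a typo: the exponent should be $(1-t)^{y-1}$. I would use the standard convention.
\end{itemize}
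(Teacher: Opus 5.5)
Your proposal is correct and follows the same route as the paper. The substitution $s=\frac{C}{h}(x-a)^\alpha$ reduces the integral to $\frac{h^p(h/C)^{1/\alpha}}{\alpha}\int_0^1(1-s)^p s^{1/\alpha-1}\,\d s$, exactly as in the paper's proof. Beyond that, you spell out the Gamma-function identities for $K(p,\alpha)$ that the paper leaves implicit, and you are right that \eqref{eq:beta} should read $(1-t)^{y-1}$, without which the identification with $\Beta(p+1,1/\alpha)$ would not hold.
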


\begin{proof}
	After a translation by $a$ and with the substitution $y=\frac Ch x^\alpha$,
	$\dx=\frac{(h/C)^{1/\alpha}}\alpha y^{1/\alpha-1}\dy$, we have
	\[
		\begin{split}
			\int_a^{a+(h/C)^{1/\alpha}} \bigl(h-C(x-a)^\alpha\bigr)^p \dx
			&= \int_0^{(h/C)^{1/\alpha}} (h-Cx^\alpha)^p \dx
			= h^p \int_0^{(h/C)^{1/\alpha}} \left(1-\frac Ch x^\alpha\right)^p \dx \\
			&= \frac{h^p(h/C)^{1/\alpha}}\alpha \int_0^1 (1-y)^p y^{1/\alpha-1} \dy \\
			&= \frac{h^p(h/C)^{1/\alpha}}\alpha \Beta(p+1,1/\alpha). \qedhere
		\end{split}
	\]
\end{proof}

\begin{lemma}[Fidelity lower bound]\label{lem:fidelity lower bound}
	Let $g:(a,b)\to\setR$ be a measurable function
	for which there exists $C>0$ such that $g(x)-g^+(a)\leq C(x-a)^\alpha$ for every $x\in(a,b)$,
	and let $u:(a,b)\to\setR$ be a measurable function such that
	\[
		h \coloneqq \inf_{(a,b)}u - g^+(a) \geq 0.
	\]
	Then
	\[
		\norm{u-g}_{L^p\bigl((a,b)\bigr)}^p
		\geq K(p,\alpha) C^{-1/\alpha} h^{p+1/\alpha},
	\]
	where
	\[
		K(p,\alpha) = \frac{\Beta(p+1,1/\alpha)}\alpha
		= \frac{\Gamma(p+1)\Gamma(1/\alpha)/\alpha}{\Gamma(p+1/\alpha+1)}
		= \frac{\Gamma(p+1)\Gamma(1/\alpha+1)}{\Gamma(p+1/\alpha+1)}
		= \binom{p+1/\alpha}{p}^{-1}.
	\]
	If $g$ satisfies also $g(x)-g^-(b)\geq -C(b-x)^\alpha$ for every $x\in(a,b)$ and
	\[
		k \coloneqq g^-(b) - \sup_{(a,b)}u \geq 0,
	\]
	then
	\begin{equation}\label{eq:fidelity lower bound}
		\norm{u-g}_{L^p\bigl((a,b)\bigr)}^p
		\geq K(p,\alpha) C^{-1/\alpha} \bigl(h^{p+1/\alpha}+k^{p+1/\alpha}\bigr).
	\end{equation}
\end{lemma}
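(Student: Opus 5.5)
The plan is a pointwise comparison followed by integration, using \autoref{lem:holder-integral} for the explicit computation. We may assume $p<\infty$, which is implicit since the statement is phrased with $\norm{\cdot}_p^p$; for $p=\infty$ the analogous statement would be $\norm{u-g}_\infty\geq h$, which is immediate. We may also assume $h>0$, since for $h=0$ the right-hand side vanishes.

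First step. Set $x_h=a+(h/C)^{1/\alpha}$ and $I_h=(a,\min\{x_h,b\})$. For every $x\in I_h$ we have
\[
	u(x)-g(x) \geq \inf_{(a,b)}u - g^+(a) - C(x-a)^\alpha = h - C(x-a)^\alpha \geq 0,
\]
so that $\abs{u(x)-g(x)}^p\geq\bigl(h-C(x-a)^\alpha\bigr)^p$ there. Integrating over $I_h$ and applying \autoref{lem:holder-integral} gives exactly $K(p,\alpha)C^{-1/\alpha}h^{p+1/\alpha}$, provided $x_h\leq b$.

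The step I expect to be the main obstacle is the case $x_h>b$, where \autoref{lem:holder-integral} only bounds the integral over the whole of $(a,x_h)$ and the interval is truncated. Here I would argue that the hypotheses become restrictive: on all of $(a,b)$ we then have $u-g\geq h-C(x-a)^\alpha>0$. I expect the statement to be intended under the implicit assumption $x_h\leq b$. This is presumably harmless in the later applications, since $h$ will be small (local competitors near the endpoints). I would either add this assumption explicitly or note that the bound holds with the integral truncated at $b$.

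Second step, the two-sided bound. Symmetrically, for $x\in(b-(k/C)^{1/\alpha},b)$ we have
\[
	g(x)-u(x) \geq g^-(b)-C(b-x)^\alpha-\sup_{(a,b)} u = k-C(b-x)^\alpha \geq 0.
\]
By the reflection $x\mapsto a+b-x$, \autoref{lem:holder-integral} gives $K(p,\alpha)C^{-1/\alpha}k^{p+1/\alpha}$. To add the two contributions, the two intervals must be disjoint. I would deduce this from the hypotheses: if both $h>0$ and $k>0$, then at a point $x$ in the overlap we would need $u(x)\geq g(x)+h-C(x-a)^\alpha>g(x)$ and simultaneously $u(x)\leq g(x)-k+C(b-x)^\alpha<g(x)$, which is a contradiction. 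Hence the intervals are disjoint up to a null set, and summing the two integrals yields \eqref{eq:fidelity lower bound}.
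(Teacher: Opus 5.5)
Your argument follows the paper's proof: the pointwise bound $u-g\geq h-C(x-a)^\alpha$ on $\bigl(a,a+(h/C)^{1/\alpha}\bigr)$, then \autoref{lem:holder-integral}, the mirrored bound near $b$, and a sum over disjoint intervals. Your concern about the case $a+(h/C)^{1/\alpha}>b$ is justified, and the paper's proof passes over it silently. The one-sided inequality is in fact false without that condition. Take $(a,b)=(0,1)$, $g\equiv0$, $u\equiv h$ and $p=\alpha=C=1$; then $\norm{u-g}_1=h<h^2/2=K(1,1)C^{-1}h^{2}$ as soon as $h>2$. Equivalently, fix $h$ and let $C\to0^+$, which is admissible because $g$ is constant.

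For the two-sided bound \eqref{eq:fidelity lower bound}, however, you need no extra assumption. This is the only form used later, in \autoref{thm:min-monotone} and \autoref{thm:min-piecewise-monotone}. There $h$ and $k$ range over $[0,H]$ and need not be small, so smallness is not why the issue is harmless.

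The two growth conditions give $g^-(b)-C(b-x)^\alpha\leq g(x)\leq g^+(a)+C(x-a)^\alpha$ for all $x\in(a,b)$. Letting $x\to a^+$ yields $g^-(b)-g^+(a)\leq C(b-a)^\alpha$. Moreover $h+k=g^-(b)-g^+(a)-\bigl(\sup_{(a,b)}u-\inf_{(a,b)}u\bigr)\leq g^-(b)-g^+(a)$. Hence both $(h/C)^{1/\alpha}$ and $(k/C)^{1/\alpha}$ are at most $b-a$, and both intervals lie in $(a,b)$. So \autoref{lem:holder-integral} applies to each interval in full, without truncation.

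Your overlap argument then gives disjointness: a common point would force both $u>g$ and $u<g$. This is slightly cleaner than the paper's argument. The paper compares the envelope $g^+(a)+C(x-a)^\alpha$ at the two interval endpoints, which requires evaluating $g$ at $b-(k/C)^{1/\alpha}$ and tacitly assumes that point lies in $(a,b)$.
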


\begin{proof}
	In the interval $\bigl(a,a+(h/C)^{1/\alpha}\bigr)$ we have
	\[
		g(x) \leq g^+(a) + C(x-a)^\alpha \leq g^+(a) + h \leq u(x),
	\]
	therefore, by \autoref{lem:holder-integral},
	\[
		\begin{split}
			\norm{u-g}_{L^p((a,b))}^p
			&\geq \int_a^{a+(h/C)^{1/\alpha}} \bigl(u(x)-g(x)\bigr)^p \dx
			\geq \int_a^{a+(h/C)^{1/\alpha}} \bigl(h-C(x-a)^\alpha\bigr)^p \dx \\
			&\geq K(p,\alpha) C^{-1/\alpha} h^{p+1/\alpha}.
		\end{split}
	\]
	Suppose that the additional assumptions on $g$ and $u$ are satisfied.
	Letting $U(x)=g^+(a)+C(x-a)^\alpha$ and $L(x)=g^-(b)-C(b-x)^\alpha$, we have
	\[
		U\bigl(a+(h/C)^\alpha\bigr)
		= \inf u \leq \sup u
		= L(b-(k/C)^{1/\alpha})
		\leq g(b-(k/C)^{1/\alpha})
		\leq U(b-(k/C)^{1/\alpha}),
	\]
	hence $a+(h/C)^{1/\alpha} \leq b-(k/C)^{1/\alpha}$.
	The interval $\bigl(b-(k/C)^{1/\alpha},b\bigr)$ is disjoint from the previous one and in it we have
	\[
		u(x) \leq g^-(b) - k \leq g^-(b) - C(b-x)^\alpha \leq g(x),
	\]
	therefore
	\[
		\begin{split}
			\norm{u-g}_{L^p((a,b))}^p
			&\geq \int_a^{a+(h/C)^{1/\alpha}} \bigl(u(x)-g(x)\bigr)^p \dx
			+ \int_{b-(k/C)^{1/\alpha}}^b \bigl(g(x)-u(x)\bigr)^p \dx \\
			&\geq \int_a^{a+(h/C)^{1/\alpha}} \bigl(h-C(x-a)^\alpha\bigr)^p \dx
			+ \int_{b-(k/C)^{1/\alpha}}^b \bigl(k-C(b-x)^\alpha\bigr)^p \dx \\
			&\geq K(p,\alpha) C^{-1/\alpha} \bigl(h^{p+1/\alpha}+k^{p+1/\alpha}\bigr). \qedhere
		\end{split}
	\]
\end{proof}

\begin{figure}
	\centering
	\begin{tikzpicture}[xscale=10,yscale=5]
		\def\ymax{1.2}
		\def\X{9/64/\ymax/\ymax}

		\draw[->]
		(-.05,0) node[left] {$g^+(a)$} -- (0,0) node[below left] {$a$} -- (1,0) node[below] {$b$} -- (1.05,0);
		\draw[->] (0,1-\ymax-.05) -- (0,\ymax);

		\begin{scope}

			\fill[yellow,opacity=0.5]
			plot[domain=0:\X,samples=60]
			(\x, {(1+\ymax*(sqrt(\x)-sqrt(1-\x)))/2 + cos(5*180*\x)*(-1+\ymax*(sqrt(\x)+sqrt(1-\x)))/2})
			-- plot[domain=0:\X,samples=60] (\X-\x, {1/2+cos(3*180*(\X-\x))/8})
			-- cycle;

			\fill[red,opacity=1.5]
			plot[domain=0:\X,samples=60] (\x, {\ymax*sqrt(\x)}) -- (0,3/8) -- cycle
			plot[domain=0:\X,samples=60] (1-\x,{1-\ymax*sqrt(\x)}) -- (1,5/8) -- cycle;

			\draw[gray,domain=0:1,samples=200]
			plot (\x, {\ymax*sqrt(\x)}) node[left=15] {$g^+(a)+C(x-a)^\alpha$}
			plot (\x, {1-\ymax*sqrt(1-\x)}) (0,1-\ymax) node[right=10] {$g^-(b)-C(b-x)^\alpha$};

			\draw[gray]
			(0,3/8) node[left,black] {$\displaystyle\inf_{(a,b)}u=g^+(a)+h$} -- (1,3/8)
			(0,5/8) node[left,black] {$\displaystyle\sup_{(a,b)}u=g^-(b)-k$} -- (1,5/8);
			\draw[gray,dashed]
			(0,1) node[left,black] {$g^-(b)$} -- (1,1);

			\draw[black,thick,domain=0:1,samples=200,smooth]
			plot (\x, {(1+\ymax*(sqrt(\x)-sqrt(1-\x)))/2 + cos(5*180*\x)*(-1+\ymax*(sqrt(\x)+sqrt(1-\x)))/2})
			node[right] {$g(x)$};

			\draw[blue,thick,domain=0:1,samples=100,smooth]
			plot (\x, {1/2+cos(3*180*\x)/8})
			node[right] {$u(x)$};

		\end{scope}
	\end{tikzpicture}
	\caption{depiction of the situation described in \autoref{lem:fidelity lower bound}.}
	\label{fig:fidelity lower bound}
\end{figure}

\begin{definition}\label{def:p-mean}
	Given $n\in\setN_+$, an exponent $p>0$ and non-negative real numbers $x_1,\dots,x_n\in[0,\infty)$, their
	$p$-mean is
	\[
		M_p(x_1,\dots,x_n) = \left(\frac1n \sum_{i=1}^n x_i^p\right)^{1/p}.
	\]
\end{definition}

If is well known that $M_p\leq M_q$ for $p<q$.
Moreover, clearly $M_p(x_1,\dots,x_n)=0$ if and only if $x_1=\dots=x_n=0$.

\subsection{Perspective of a convex function}\label{sec:perspective}

Let $\phi:[0,\infty)\to\setR$ be a convex function which is continuous in $0$.\footnote{
	One may extend it to $\phi:\setR\to\setR\cup\{\infty\}$ by setting $\phi\rvert_{(-\infty,0)}=\infty$,
	but of course this extension is no longer continuous in $0$.}

The \emph{perspective} of $\phi$ is $\phi:[0,\infty)\times(0,\infty)\to\setR$ given by $\phi(t,s)=s\phi(t/s)$.
It is clearly continuous, and it is also convex (see \cite[Section~3.2.6]{boyd_convex_2023}).

It is well known that the difference quotient function $t\mapsto\frac{\phi(t)-\phi(0)}t$ is non-decreasing.
As a consequence, we can define the \emph{asymptotic slope}
\[
	\phi'(\infty)
	= \lim_{t\to\infty} \frac{\phi(t)}t
	= \lim_{t\to\infty} \frac{\phi(t)-\phi(0)}t
	= \sup_{t>0} \frac{\phi(t)-\phi(0)}t
	\in \setR\cup\{\infty\}
\]
and the \emph{recession function} $\phi^\infty:[0,\infty)\to(-\infty,\infty]$
\[
	\phi^\infty(t)
	= \lim_{r\to\infty} \frac{\phi(rt)}r
	= \lim_{r\to\infty} \frac{\phi(rt)-\phi(0)}r
	= \sup_{r>0} \frac{\phi(rt)-\phi(0)}r
	\in \setR\cup\{\infty\}.
\]

It is immediate to verify that $\phi^\infty(t)=\phi'(\infty)t$ for every $t\in[0,\infty)$,
with the usual convention $\infty\cdot0=0$.
Indeed for $t=0$ we have $\phi^\infty(0)=0=\phi'(\infty)0$, and for $t>0$ we have
\[
	\phi^\infty(t)
	= \lim_{r\to\infty} \frac{\phi(rt)}r
	= t \lim_{r\to\infty} \frac{\phi(rt)}{rt}
	= t \lim_{r\to\infty} \frac{\phi(r)}r
	= t \phi'(\infty).
\]

Moreover, if $\phi(0)=0$, then $\phi(t,s)$ is non-increasing in $s$: indeed,
\[
	\frac1t \phi(t,s) = \frac st \phi(t/s) = \frac{\phi(t/s)-\phi(0)}{t/s}
\]
is non-decreasing in $t/s$, therefore non-increasing in $s$.

\begin{lemma}\label{lem:perspective-legendre}
	Using the notation $\phi_s=\phi(\plchldr,s)$ to denote the perspective, we have the following
	rules of commutation with the Legendre transform: for every $s>0$
	\begin{align*}
		(\phi_s)^* & = s\phi^*,   &
		(\phi^*)_s & = (s\phi)^*.
	\end{align*}
\end{lemma}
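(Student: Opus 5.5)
The plan is to compute both Legendre transforms directly from the definition $f^*(y)=\sup_t\bigl(ty-f(t)\bigr)$, where the supremum runs over the domain $t\in[0,\infty)$; equivalently, we use the extension $\phi=\infty$ on $(-\infty,0)$ from the footnote, so that every transform is taken over the same effective domain. Fix $s>0$ and recall $\phi_s(t)=s\phi(t/s)$. The only tool needed is the change of variables $t=s\tau$, which is a bijection of $[0,\infty)$ onto itself because $s>0$.

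For the first identity, we compute
\[
	(\phi_s)^*(y)
	= \sup_{t\geq0}\bigl(ty - s\phi(t/s)\bigr)
	= \sup_{\tau\geq0}\bigl(s\tau y - s\phi(\tau)\bigr)
	= s\sup_{\tau\geq0}\bigl(\tau y - \phi(\tau)\bigr)
	= s\phi^*(y).
\]
Pulling $s$ out of the supremum is legitimate because $s>0$, and the identity remains valid when both sides equal $+\infty$.

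For the second identity, we apply the definition of the perspective to $\phi^*$, obtaining $(\phi^*)_s(y)=s\phi^*(y/s)$. On the other side,
\[
	(s\phi)^*(y)
	= \sup_{t}\bigl(ty - s\phi(t)\bigr)
	= s\sup_t\bigl(t\,(y/s) - \phi(t)\bigr)
	= s\phi^*(y/s),
\]
so the two expressions coincide. Alternatively, one could derive the second identity from the first by biconjugation, since $\phi$ is convex and, by continuity at $0$, lower semicontinuous. The direct computation is simpler, however, and needs no closedness assumption.

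The only point requiring care, and the one I would treat as the main obstacle, is conventions rather than computation. First, $\phi^*$ is defined on all of $\setR$ (it may take the value $+\infty$), so the perspective $(\phi^*)_s$ must be read as $s\phi^*(\cdot/s)$ on $\setR$ and not only on $[0,\infty)$. Second, the domain restriction $t\geq0$ is preserved under the scaling $t=s\tau$, so the two computations above are unaffected by it. I would state both conventions explicitly at the start, and then the two displays above constitute the full argument.
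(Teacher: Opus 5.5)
Your proposal is correct and follows essentially the same route as the paper: both identities come from the substitution $t=s\tau$ (a bijection of $[0,\infty)$) and from pulling the factor $s>0$ out of the supremum. Your remark that $(\phi^*)_s=s\phi^*(\cdot/s)$ must be read on all of $\setR$ is a convention the paper uses implicitly.
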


\begin{proof}
	Straightforward computations show that, for every $\tau\in\setR$,
	\[
		\begin{split}
			(\phi_s)^*(\tau)
			&= \sup_{t\geq0} \bigl(\tau t-\phi_s(t)\bigr)
			= \sup_{t\geq0} \bigl(\tau t-s\phi(t/s)\bigr) \\
			&= s \cdot \sup_{t\geq0} \bigl(\tau t/s-\phi(t/s)\bigr)
			= s \cdot \sup_{u\geq0} \bigl(\tau u-\phi(u)\bigr)
			= s  \phi^*(\tau)
		\end{split}
	\]
	and
	\[
		(\phi^*)_s(\tau)
		= s \phi^*(\tau/s)
		= s \cdot \sup_{t\geq0} \bigl((\tau/s)t-\phi(t)\bigr)
		= \sup_{t\geq0} \bigl(\tau t-s\phi(t)\bigr)
		= (s\phi)^*(\tau). \qedhere
	\]
\end{proof}

\begin{proposition}\label{prop:perspective}
	If $\phi'(\infty)\in\setR$ is finite, then the perspective $\phi:[0,\infty)\times(0,\infty)\to\setR$
	can be extended continuously to $\phi:[0,\infty)\times[0,\infty)\to\setR$ setting
	$\phi(t,0)=\phi^\infty(t)=\phi'(\infty)t$ and this extension is convex.
\end{proposition}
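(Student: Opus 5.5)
The plan has two parts: continuity of the extension at the boundary $s=0$, and convexity of the extended function.

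\emph{Continuity.} On $[0,\infty)\times(0,\infty)$ the perspective is already continuous, so it remains to show that $\phi(t_n,s_n)\to\phi'(\infty)t$ whenever $(t_n,s_n)\to(t,0)$ with $s_n\geq0$. Points with $s_n=0$ are handled directly by $\phi(t_n,0)=\phi'(\infty)t_n\to\phi'(\infty)t$, so I assume $s_n>0$. I write
\[
	\phi(t_n,s_n) = s_n\phi(0) + t_n\,\frac{\phi(t_n/s_n)-\phi(0)}{t_n/s_n}
\]
when $t_n>0$, and $\phi(0,s_n)=s_n\phi(0)$ when $t_n=0$. The first term tends to $0$.
\begin{itemize}
	\item If $t>0$, then $t_n/s_n\to\infty$. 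Since the difference quotient is non-decreasing with limit $\phi'(\infty)$, which is finite by hypothesis, the second term converges to $t\phi'(\infty)$.
	\item If $t=0$, the difference quotient is bounded above by $\phi'(\infty)$ and below by its value at a fixed small argument. Since it is non-decreasing, it is bounded below on $[\delta,\infty)$, while on $(0,\delta]$ the convexity of $\phi$ and its continuity at $0$ keep $\phi(r)$ bounded there. So the second term is $t_n$ times a bounded quantity and tends to $0=\phi'(\infty)\cdot0$.
\end{itemize}
The delicate point is this uniform bound on $s\phi(t/s)$ near $(0,0)$. I would handle it by using monotonicity of the difference quotient for large arguments, and for bounded arguments $t_n/s_n\leq R$ the estimate $|s_n\phi(t_n/s_n)|\leq s_n\max_{[0,R]}|\phi|\to0$.

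\emph{Convexity.} The perspective is convex on $[0,\infty)\times(0,\infty)$ by the cited result. The closed quadrant $[0,\infty)^2$ is convex, and the open one $[0,\infty)\times(0,\infty)$ is dense in it. Given two points of the closed quadrant and $\lambda\in[0,1]$, I approximate both points by points with $s>0$, for instance by adding $\epsilon$ to their $s$-coordinates. I then write the convexity inequality for the approximating points and let $\epsilon\to0^+$, using the continuity established above. A continuous function on a convex set that is convex on a dense convex subset is convex, which gives convexity of the extension.

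As an alternative check for convexity, $\phi(t,s)=\sup_\tau\bigl(\tau t-s\phi^*(\tau)\bigr)$ over $\tau$ in the domain of $\phi^*$, by \autoref{lem:perspective-legendre} and Fenchel–Moreau. At $s=0$ this supremum gives $t\cdot\sup\operatorname{dom}\phi^*=\phi'(\infty)t$, so the extension is a supremum of linear functions and hence convex. I would use the density argument as the main proof, however.
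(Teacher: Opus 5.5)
Your proposal is correct. The convexity step is the same as the paper's ``general'' proof: shift the $s$-coordinates by $\varepsilon$, use convexity of the perspective on $s>0$, and let $\varepsilon\to0^+$. The paper needs only continuity along vertical lines there, whereas you use the joint continuity you have just proved.

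The continuity step takes a different route from both of the paper's proofs.
\begin{itemize}
\item The paper's ``specific'' proof squeezes $s\phi(t/s)$ between the affine functions $\sigma t-\phi^*(\sigma)s$ and $\phi'(\infty)t+\phi(0)s$. The key point is that $\phi^*(\sigma)<\infty$ for every $\sigma<\phi'(\infty)$. This treats $t_0=0$ and $t_0>0$ at once, with no case analysis.
\item The paper's ``general'' proof deduces joint continuity from convexity plus continuity in $s$, via a convex interpolation argument.
\end{itemize}
Your argument uses only the monotonicity of $q(r)=(\phi(r)-\phi(0))/r$ and the boundedness of $\phi$ on compacts. It is more elementary, but it forces the split $t_0>0$ versus $t_0=0$.

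Your ``alternative check'' $\phi(t,s)=\sup_\tau\bigl(\tau t-s\phi^*(\tau)\bigr)$ in fact contains the paper's specific proof. As a supremum of continuous affine functions on the closed quadrant, the extension is lower semicontinuous. Together with the upper bound $\phi'(\infty)t+\phi(0)s$, this already gives continuity at $s=0$, so it could replace your case analysis entirely.

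There is one slip in your bullet for $t=0$. The quotient $q$ need not be bounded below near $0$. For example, $\phi(r)=-\sqrt r$ satisfies the hypotheses with $\phi'(\infty)=0$ and has $q(r)=-r^{-1/2}$. So when $t_n/s_n$ is small, the second term is \emph{not} ``$t_n$ times a bounded quantity''. Boundedness of $\phi$ on $[0,\delta]$ only shows that $t_nq(t_n/s_n)=s_n\bigl(\phi(t_n/s_n)-\phi(0)\bigr)$ is $s_n$ times a bounded quantity.

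Your final paragraph handles this correctly, so the case $t_0=0$ should be written directly in that form:
\begin{itemize}
\item along indices with $t_n/s_n\le R$, use $|s_n\phi(t_n/s_n)|\le s_n\max_{[0,R]}|\phi|$;
\item along indices with $t_n/s_n>R$, use $q(R)\le q(t_n/s_n)\le\phi'(\infty)$, so $|s_n\phi(t_n/s_n)|\le s_n|\phi(0)|+t_n\max\{|q(R)|,|\phi'(\infty)|\}$.
\end{itemize}
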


\begin{proof}[``Specific'' proof]\footnote{This proof uses more specific properties of the perspective.}
	Since for every $t\in[0,\infty)$ we have the directional limit
	\[
		\lim_{s\to0^+} \phi(t,s)
		= \lim_{s\to0^+} s\phi(t/s)
		= \lim_{r\to\infty} \phi(rt)/r
		= \phi^\infty(t),
	\]
	setting $\phi(t,0)=\phi^\infty(t)=\phi'(\infty)t$ is the only possible choice that may render the
	extension continuous. Let us verify that it is indeed the case.

	From $\frac{\phi(t)-\phi(0)}t \leq \phi'(\infty)$ we get the upper bound
	$\phi(t) \leq \phi'(\infty)t+\phi(0)$, valid for every $t\in[0,\infty)$.

	We claim that the Legendre transform $\phi^*(\sigma)\in\setR$ is finite for every
	$\sigma\in\bigl(-\infty,\phi'(\infty)\bigr)$.\footnote{
		Notice that in general it may be $\phi^*\bigl(\phi'(\infty)\bigr)=\infty$. This happens when
		$\phi'(\infty)t-\phi(t)$ does not go to zero as $t\to\infty$, for instance $\phi(t)=o(t)$ but
		$\phi(t)\not\to0$. An explicit example is $\phi(t)=-\sqrt t$.}
	Indeed, fix $T\in[0,\infty)$ such that
	$\frac{\phi(T)-\phi(0)}T \geq \sigma$. Then for every $t\in[T,\infty)$ we have
	$\frac{\phi(t)-\phi(0)}t \geq \frac{\phi(T)-\phi(0)}T \geq \sigma$, hence
	$\phi(t) \geq \sigma t+\phi(0)$. This implies that
	\[
		\begin{split}
			\phi^*(\sigma)
			&= \sup_{t\in[0,\infty)} \bigl(\sigma t-\phi(t)\bigr)
			= \max\left\{ \sup_{t\in[0,T]} \bigl(\sigma t-\phi(t)\bigr),
			\sup_{t\in[T,\infty)} \bigl(\sigma t-\phi(t)\bigr)\right\} \\
			&= \max\left\{ \max_{t\in[0,T]} \bigl(\sigma t-\phi(t)\bigr),
			\sup_{t\in[T,\infty)} \bigl(-\phi(0)\bigr)\right\}
			< \infty
		\end{split}
	\]
	is finite. This fact provides us with the lower bound $\phi(t) \geq \sigma t-\phi^*(\sigma)$,
	valid for every $t\in[0,\infty)$.

	The lower bound and upper bound can be combined, evaluated at $t/s$ and multiplied by $s$, yielding
	\[
		\sigma t - \phi^*(\sigma)s \leq s\phi(t/s) \leq \phi'(\infty)t + \phi(0)s,
		\qquad \forall (t,s)\in[0,\infty)\times(0,\infty).
	\]
	Notice that both the lower and upper bounds are continuous functions of $t$ and $s$.
	For a fixed $t_0\in[0,\infty)$, we get
	\[
		\limsup_{\substack{t\to t_0 \\ s\to0^+}} \phi(t,s)
		= \limsup_{\substack{t\to t_0 \\ s\to0^+}} s\phi(t/s)
		\leq \limsup_{\substack{t\to t_0 \\ s\to0^+}} \bigl(\phi'(\infty)t+\phi(0)s\bigr)
		= \phi'(\infty)t_0
	\]
	and
	\[
		\liminf_{\substack{t\to t_0 \\ s\to0^+}} \phi(t,s)
		= \liminf_{\substack{t\to t_0 \\ s\to0^+}} s\phi(t/s)
		\geq \liminf_{\substack{t\to t_0 \\ s\to0^+}} \bigl(\sigma t - \phi^*(\sigma)s\bigr)
		= \sigma t_0.
	\]
	By the arbitrariness of $\sigma<\phi'(\infty)$ we deduce that the $\liminf$ is actually greater
	than or equal to $\phi'(\infty)t_0$, which implies
	\[
		\lim_{\substack{t\to t_0 \\ s\to0^+}} \phi(t,s) = \phi'(\infty)t_0 = \phi(t_0,0),
	\]
	hence the extension is continuous.

	It is a general fact that a continuous extension is necessarily convex.
\end{proof}

\begin{proof}[``General'' proof]\footnote{This proof uses only the existence of the directional limit.}
	Since for every $t\in[0,\infty)$ we have the directional limit
	\[
		\lim_{s\to0^+} \phi(t,s)
		= \lim_{s\to0^+} s\phi(t/s)
		= \lim_{r\to\infty} \phi(rt)/r
		= \phi^\infty(t),
	\]
	setting $\phi(t,0)=\phi^\infty(t)=\phi'(\infty)t$ is the only possible choice that may render the
	extension continuous.

	First of all, observe that the extension is convex. To this end, let
	$(t_0,s_0),(t_1,s_1)\in[0,\infty)\times[0,\infty)$, $\tau\in[0,1]$ and
	$(t_\tau,s_\tau)=(1-\tau)(t_0,s_0)+\tau(t_1,s_1)$. For every $\eps>0$ we have
	$(t_0,s_0+\eps),(t_1,s_1+\eps)\in[0,\infty)\times(0,\infty)$ and
	$(t_\tau,s_\tau+\eps)=(1-\tau)(t_0,s_0+\eps)+\tau(t_1,s_1+\eps)$, therefore
	\[
		\phi(t_\tau,s_\tau+\eps) \leq (1-\tau)\phi(t_0,s_0+\eps) + \tau\phi(t_1,s_1+\eps).
	\]
	Sending $\eps\to0^+$ and using the directional continuity in $s$ yields
	\[
		\phi(t_\tau,s_\tau) \leq (1-\tau)\phi(t_0,s_0) + \tau\phi(t_1,s_1),
	\]
	hence the extension is convex.

	Let us verify that it is continuous.
	Notice that the extension $\phi(t,0)=\phi'(\infty)t$ is a continuous function of $t$.
	Fix $t_0\in(0,\infty)$ and $\eps>0$. There exists $\delta>0$ such that $t_0-\delta\geq0$ and
	\[
		\abs{\phi(t_0\pm\delta,0)-\phi(t_0,0)} \leq \eps/2.
	\]
	By the continuity in $s$, there exists $\eta>0$ such that for every $s\in[0,\eta]$ we have
	\begin{align*}
		\abs{\phi(t_0,s)-\phi(t_0,0)}                   & \leq \eps/2, &
		\abs{\phi(t_0\pm\delta,s)-\phi(t_0\pm\delta,0)} & \leq \eps/2.
	\end{align*}
	By the triangle inequality we deduce also $\abs{\phi(t_0\pm\delta,s)-\phi(t_0,0)}\leq\eps$.
	Let $(t,s)\in[t_0-\delta,t_0+\delta]\times[0,\eta]$ be an arbitrary point in this neighborhood of $(t_0,0)$.
	By convexity we have the upper bound
	\[
		\phi(t,s) \leq \max\{\phi(t_0\pm\delta,0),\phi(t_0\pm\delta,\eta)\} \leq \phi(t_0,0) + \eps.
	\]
	To find a lower bound, assume that $t_0\leq t\leq t_0+\delta$;
	the symmetric case $t_0-\delta\leq t\leq t_0$ is analogous.
	We can write $t_0=(1-\tau)(t_0-\delta)+\tau t$ with $\tau=\delta/(t-t_0+\delta)\in[1/2,1]$, therefore
	\[
		\phi(t_0,s) \leq (1-\tau)\phi(t_0-\delta,s) + \tau\phi(t,s),
	\]
	from which
	\[
		\begin{split}
			\phi(t,s)
			&\geq \frac1\tau \phi(t_0,s) - \frac{1-\tau}\tau \phi(t_0-\delta,s)
			\geq \frac1\tau \bigl(\phi(t_0,0)-\eps\bigr) - \frac{1-\tau}\tau \bigl(\phi(t_0,0)+\eps\bigr) \\
			&= \phi(t_0,0) - \left(1-\frac2\tau\right)\eps
			\geq \phi(t_0,0) - 3\eps.
		\end{split}
	\]
	Combined with the upper bound, this shows that $\abs{\phi(t,s)-\phi(t_0,0)}\leq 3\eps$ for every
	$(t,s)\in[t_0-\delta,t_0+\delta]\times[0,\eta]$, proving the continuity in $(t_0,0)$, for
	$t_0\in(0,\infty)$.

	The continuity in $(0,0)$ is proved similarly. Given $\eps>0$ there exists $\delta>0$ such that
	\begin{align*}
		\abs{\phi(\delta,0)-\phi(0,0)}  & \leq \eps/2, &
		\abs{\phi(2\delta,0)-\phi(0,0)} & \leq \eps/2.
	\end{align*}
	By the continuity in $s$, there exists $\eta>0$ such that for every $s\in[0,\eta]$ we have
	\begin{align*}
		\abs{\phi(0,s)-\phi(0,0)}             & \leq \eps/2, &
		\abs{\phi(\delta,s)-\phi(\delta,0)}   & \leq \eps/2, &
		\abs{\phi(2\delta,s)-\phi(2\delta,0)} & \leq \eps/2.
	\end{align*}
	By the triangle inequality we infer $\abs{\phi(i\delta,s)-\phi(0,0)}\leq\eps$ for $i=0,1,2$.
	If $(t,s)\in[0,2\delta]\times[0,\eta]$ is an arbitrary point in this neighborhood of the origin,
	with a similar argument as before we get $\abs{\phi(t,s)-\phi(0,0)}\leq3\eps$, hence proving the
	continuity.
\end{proof}

\section{Linear \texorpdfstring{$\TV$}{TV}}\label{sec:linear}

In this section we study the minimization of the functional
\[
	F(u) = \TV(u) + \lambda\norm{u-g}_p^\gamma
\]
with linear total variation and fidelity with respect to an input datum $g$.
We classify situations where the minimizer $u$ is not the input $g$ itself (\autoref{sec:linear-not-min})
and situations where the minimizer $u$ is indeed the input $g$ (\autoref{sec:linear-min}).

Our results are based on some conditions on $g$ ($\alpha$-growth in \autoref{def:alpha-growth},
monotonicity, piecewise monotonicity) and on specific ranges of the parameters $p,\gamma,\lambda$.
The complementarity of these ranges provides a full characterization and demonstrates the sharpness
of our results (see \autoref{rmk:sharpness}).

\subsection{The input is not a minimizer}\label{sec:linear-not-min}

We start with a model situation where the input $g$ is taken to be the Cantor--Lebesgue--Vitali function
$\chi\in C([0,1];[0,1])$.
Subsequently, this example will be generalized in \autoref{thm:not-min-alpha-growth} to functions
with at least $\alpha$-growth at one endpoint (see \autoref{def:alpha-growth}).

\begin{theorem}\label{thm:not-min-cantor-function}
	Let $\chi\in C([0,1];[0,1])$ be the Cantor--Lebesgue--Vitali function and $\lambda,\gamma>0$, $p\in[1,\infty]$.
	If
	\[
		\gamma > \left(\frac1p\cdot\frac{\log3}{\log2}+1\right)^{-1}
	\]
	then the function $\chi$ is not a minimizer of the functional
	$F(u) = \TV(u) + \lambda\norm{u-\chi}_p^\gamma.$
\end{theorem}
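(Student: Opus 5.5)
To show that $\chi$ is not a minimizer, I will exhibit a competitor $u_h$ with $F(u_h)<F(\chi)=\TV(\chi)=1$ for small $h>0$. The competitor is the truncation $u_h=\chi\vee h$, i.e.\ $\chi$ flattened near the left endpoint $0$ at height $h=2^{-n}$. By \autoref{rmk:BV-truncation} (or directly, since $\chi$ is non-decreasing), $\TV(u_h)=1-h$. The cost of the competitor is therefore a saving of exactly $h$ in total variation, against the fidelity term $\lambda\norm{u_h-\chi}_p^\gamma$. It suffices to show that this fidelity term is $o(h)$ as $h\to0^+$ along $h=2^{-n}$.

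The fidelity estimate uses self-similarity. Since $\chi(x)\le 2^{-n}$ exactly on $[0,3^{-n}]$, the difference $u_h-\chi=(h-\chi)^+$ is supported in $[0,3^{-n}]$ and bounded by $h$. So for $p<\infty$,
\[
	\norm{u_h-\chi}_p \le h\,(3^{-n})^{1/p} = h\cdot h^{\frac{\log3}{p\log2}} = h^{1+\frac1p\cdot\frac{\log3}{\log2}},
\]
because $3^{-n}=(2^{-n})^{\log3/\log2}$. For $p=\infty$ the bound $\norm{u_h-\chi}_\infty\le h$ is the same formula with $1/p=0$. (The exact value could be computed using the self-similarity $\chi(x/3)=\chi(x)/2$, but this crude bound is enough.)

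Put $\beta=1+\frac1p\frac{\log3}{\log2}$. Then
\[
	F(u_h)\le 1-h+\lambda h^{\gamma\beta}.
\]
The hypothesis $\gamma>\beta^{-1}$ gives $\gamma\beta>1$. Hence $\lambda h^{\gamma\beta}<h$ for $n$ large, and so $F(u_h)<1=F(\chi)$, which proves the claim.

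The only delicate point is the exact choice of scale. The heights $h=2^{-n}$ must be matched with the lengths $3^{-n}$ so that the support of $u_h-\chi$ is controlled; this matching is what produces the exponent $\log3/\log2$. The rest is elementary, and no $\lambda$-dependence arises because the small-$h$ asymptotics always win.
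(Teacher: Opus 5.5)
Your proof is correct and is essentially the paper's argument. You use a truncation competitor at height $2^{-n}$ whose deviation from $\chi$ lives on $\{\chi<2^{-n}\}=[0,3^{-n})$, so the bound $\|u-\chi\|_p\le 2^{-n}3^{-n/p}$ loses to a total-variation saving of order $2^{-n}$ exactly when $\gamma\bigl(1+\frac1p\frac{\log 3}{\log 2}\bigr)>1$. The differences are cosmetic: the paper truncates at both ends, $2^{-n}\vee\chi\wedge(1-2^{-n})$, saves $2^{1-n}$, and bounds $\int_0^{3^{-n}}\chi^p$ via $\|\chi\|_\infty^{p-1}\int\chi$, while your one-sided truncation with the cruder sup-times-measure bound gives the same estimate. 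One small slip: $\chi\le 2^{-n}$ actually holds on $[0,2\cdot 3^{-n}]$, but what you need, and what is true, is that $\chi<2^{-n}$ exactly on $[0,3^{-n})$.
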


\begin{proof}
	Given $n\in\setN_+$ let $\chi_n=2^{-n}\vee\chi\wedge(1-2^{-n})\in C([0,1];[2^{-n},1-2^{-n}])$. We have $F(\chi)=\TV(\chi)=1$ and $\TV(\chi_n)=1-2^{-n+1}$. Moreover, for $p=\infty$ we have $\norm{\chi_n-\chi}_p = 2^{-n}$, whereas for $p\in[1,\infty)$ we have
	\[
		\begin{split}
			\norm{\chi_n-\chi}_p^p
			&= 2\int_0^{3^{-n}} \chi(x)^p \d x
			\leq 2\int_0^{3^{-n}} \chi(x)^{p-1} \chi(x) \d x
			\leq 2\norm{\chi}_{L^\infty([0,3^{-n}])}^{p-1} \int_0^{3^{-n}} \chi(x) \d x \\
			&= 2\cdot 2^{-n(p-1)} \frac12 2^{-n}3^{-n}
				= 2^{-np}3^{-n},
		\end{split}
	\]
	hence $\norm{\chi_n-\chi}_p\leq 2^{-n}3^{-n/p}$ in both cases (interpreting $-n/p=0$ for $p=\infty$). From this we get that
	\[
		F(\chi_n) \leq 1-2^{-n+1} + \lambda 2^{-n\gamma} 3^{-n\gamma/p},
	\]
	therefore $F(\chi_n)<F(\chi) \Leftrightarrow \lambda 2^{-n\gamma} 3^{-n\gamma/p}<2^{-n+1}$, i.e.\ if and only if
	\[
		\frac\lambda2
		< 2^{(\gamma-1)n} 3^{\frac\gamma{p}n}
		= \exp\oleft(\left((\gamma-1)\log2+\frac\gamma{p}\log3\right)n\right).
	\]
	For $n$ large enough this is satisfied if
	\[
		(\gamma-1)\log2+\frac\gamma{p}\log3 > 0
		\Longleftrightarrow
		(1-\gamma)\log2 < \frac\gamma{p}\log3
		\Longleftrightarrow
		\frac{1-\gamma}\gamma < \frac1p\cdot\frac{\log3}{\log2}
		\Longleftrightarrow
		\frac1\gamma < \frac1p\cdot\frac{\log3}{\log2} + 1. \qedhere
	\]
\end{proof}

\begin{remark}\label{rmk:not-min-cantor-function}
	The previous statement holds true for the functional
	\[
		F(u) = \TV\!\!_{\phi,\theta}(u) + \lambda\norm{u-\chi}_p^\gamma
	\]
	where
	\[
		\TV\!\!_{\phi,\theta}(u)
		= \int_0^1 \phi(\abs{u'}) + \abs{D^cu}((0,1)) + \sum_{J_u} \theta(\abs{u^+-u^-})
	\]
	is the non-linear total variation.%
	\footnote{The condition $\lim_{t\to\infty}\phi(t)/t=\lim_{t\to0^+}\theta(t)/t=1$ is not required.}
	This is because $\TV_{\phi,\theta}$ coincides with $\TV$ for both $\chi$ and the competitor $\chi_n$.
\end{remark}

The next lemma is used in the proof of \autoref{thm:not-min-alpha-growth}. We will need it only with
$\ell=\sup_{(a,c]}g$, but we prefer to give a more general statement, which might be of broader interest.

\begin{lemma}\label{lem:truncation}
	Let $g\in\BV\bigl((a,b)\bigr)$, $c\in(a,b)$ and
	\[
		\inf_{(a,c]} g \leq \ell \leq \sup_{(a,c]} g.
	\]
	Define $u:(a,b)\to\setR$ such that
	\[
		u(x) = \begin{cases} \ell & x\in(a,c), \\ g(x) & x\in(c,b), \end{cases}
	\]
	and $u$ is regular in $c$. Then $u\in\BV\bigl((a,b)\bigr)$ and
	\[
		\TV\bigl(g;(a,b)\bigr) \geq \TV\bigl(u;(a,b)\bigr) + \abs{\ell-g^+(a)}.
	\]
\end{lemma}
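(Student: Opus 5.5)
The plan is to work with the pointwise variation of \autoref{def:TV} and split at $c$ using the additivity properties of \autoref{prop:further-properties}. Since $u$ is only prescribed off the point $c$ and must be regular there, $u(c)$ is some value between $\ell=u^-(c)$ and $g^+(c)=u^+(c)$. Hence
\[
	\TV\bigl(u;(a,b)\bigr) = \abs{g^+(c)-\ell} + \TV\bigl(g;(c,b)\bigr),
\]
because $u$ is constant on $(a,c)$ and, by \autoref{prop:basic-properties}, regularity at $c$ makes the pointwise variation there equal to the jump $\abs{u^+(c)-u^-(c)}$. In particular this quantity is finite, so $u\in\TV$. For $g$ we have
\[
	\TV\bigl(g;(a,b)\bigr) = \TV\bigl(g;(a,c]\bigr) + \abs{g^+(c)-g(c)} + \TV\bigl(g;(c,b)\bigr).
\]
It therefore suffices to prove
\[
	\TV\bigl(g;(a,c]\bigr) + \abs{g^+(c)-g(c)} \geq \abs{g^+(c)-\ell} + \abs{\ell-g^+(a)}.
\]

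The key step is to find points in $(a,c]$ at which $g$ lies on the far side of $\ell$. Fix $\eps>0$. Since $\ell\in[\inf_{(a,c]}g,\sup_{(a,c]}g]$, we can pick a point $y\in(a,c]$ with $g(y)$ within $\eps$ of the "correct side".

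\emph{Case $\ell\geq g^+(a)$.} Take $y$ with $g(y)\geq\ell-\eps$. Take also $x<y$ close to $a$ with $\abs{g(x)-g^+(a)}<\eps$; this is possible by \autoref{prop:BV is regulated}, provided $y$ is not itself arbitrarily close to $a$.

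Then the partition $x<y\leq c$, followed by the limit to $c^+$, gives
\[
	\TV\bigl(g;(a,c]\bigr) + \abs{g^+(c)-g(c)} \geq \abs{g(y)-g(x)} + \abs{g^+(c)-g(y)}.
\]
Here $\abs{g^+(c)-g(c)}$ accounts for the last leg, via the triangle inequality through $g(c)$. We then use two facts:
\begin{itemize}
	\item $\abs{g(y)-g(x)}\geq \ell-g^+(a)-2\eps$;
	\item $\abs{g^+(c)-g(y)}\geq\abs{g^+(c)-\ell}-\eps$ whenever $g^+(c)\leq \ell$. If instead $g^+(c)>\ell$, we choose $y$ with $g(y)$ near $\inf g$ or simply use the direct path through $\ell$.
\end{itemize}
This needs care: the sum $\abs{g(y)-g^+(a)}+\abs{g^+(c)-g(y)}$ is at least the "path through $\ell$" length $\abs{\ell-g^+(a)}+\abs{g^+(c)-\ell}$ whenever $g(y)$ lies beyond $\ell$ relative to both endpoints. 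When $g^+(c)$ and $g^+(a)$ lie on opposite sides of $\ell$, no detour is needed and the triangle inequality alone suffices. When they lie on the same side, the detour to a $y$ beyond $\ell$ is needed, and that is exactly what $\inf\leq\ell\leq\sup$ provides.

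\emph{Case $\ell<g^+(a)$.} This case is symmetric, with $\inf$ in place of $\sup$.

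The main obstacle is the degenerate situation where the supremum (or infimum) over $(a,c]$ is approached only as $y\to a^+$. In that case, however, $g^+(a)$ itself equals that extreme value, so $\ell$ is squeezed to $g^+(a)$ and the term $\abs{\ell-g^+(a)}$ vanishes; only $\abs{g^+(c)-\ell}$ remains, and it is bounded by $\TV(g;(a,c])+\abs{g^+(c)-g(c)}$ via the points near $a$ and $c$. Letting $\eps\to0$ then concludes the proof.
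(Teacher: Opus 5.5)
Your argument is correct, but it is organized differently from the paper's. You make the same reduction to $\TV(g;(a,c]) + \abs{g^+(c)-g(c)} \geq \abs{g^+(c)-\ell} + \abs{\ell-g^+(a)}$, and then split according to where $g^+(a)$ and $g^+(c)$ sit relative to $\ell$:
\begin{itemize}
\item if $\ell$ lies between them, the direct path from a point near $a$ to $c$ and on to $g^+(c)$ suffices, and the hypothesis on $\ell$ is not used at all;
\item if both lie on the same side, a single point $y\in(a,c]$ with $g(y)$ (up to $\eps$) beyond $\ell$ does the job, using only one of the two inequalities $\inf_{(a,c]}g\leq\ell$ and $\ell\leq\sup_{(a,c]}g$.
\end{itemize}

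The paper avoids any case distinction. It always takes two points in $(a,c]$ where $g$ is $\eps$-close to $m=\inf_{(a,c]}g$ and to $M=\sup_{(a,c]}g$, in whichever order they occur. It bounds the variation from below by the path $g^+(a)\to m\to M\to g(c)\to g^+(c)$ (or with $m$ and $M$ swapped), writes $\abs{m-M}=\abs{m-\ell}+\abs{\ell-M}$, and finishes with two triangle inequalities. Your route shows which part of the hypothesis each configuration actually needs; the paper's is shorter and uniform.

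Two small points about your write-up:
\begin{itemize}
\item The ``main obstacle'' is not real. Since $x$ is chosen after $y$, for any fixed $y>a$ there is some $x\in(a,y)$ with $\abs{g(x)-g^+(a)}<\eps$. The paper simply uses $\TV(g;(a,x'_\eps])\geq\abs{g^+(a)-g(x'_\eps)}$ for every $x'_\eps>a$. So the proviso on $y$ and your final paragraph can be dropped.
\item In the sub-case $g^+(c)>\ell\geq g^+(a)$, the alternative of choosing $y$ with $g(y)$ near $\inf g$ is unnecessary. Here $\ell$ lies between $g^+(a)$ and $g^+(c)$, so the direct path suffices, exactly as your following paragraph says.
\end{itemize}
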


\begin{proof}
	Since $u$ is constant in $(a,c)$, is regular in $c$, and $u=g$ in $(c,b)$, we have
	\[
		\begin{split}
			\TV\bigl(u;(a,b)\bigr)
			= \TV\bigl(u;(a,c)\bigr) + \abs{u^-(c)-u^+(c)} + \TV\bigl(u;(c,b)\bigr)
			= \abs{\ell-g^+(c)} + \TV\bigl(g;(c,b)\bigr),
		\end{split}
	\]
	hence $u\in\BV\bigl((a,b)\bigr)$.
	Let $m=\inf_{(a,c]}g$ and $M=\sup_{(a,c]}g$.
	For every $\eps>0$ there exists $x'_\eps,x''_\eps\in(a,c]$ with $x'_\eps\leq x''_\eps$ such that
	\begin{gather*}
		\abs{g(x'_\eps)-m} \leq \eps \quad\wedge\quad \abs{g(x''_\eps)-M} \leq \eps
		\qquad\text{or}\qquad
		\abs{g(x'_\eps)-M} \leq \eps \quad\wedge\quad \abs{g(x''_\eps)-m} \leq \eps.
	\end{gather*}
	Let us treat the first alternative; the second is analogous.
	Exploiting $\abs{m-M}=\abs{m-\ell}+\abs{\ell-M}$, we can estimate
	\[
		\begin{split}
			&\TV\bigl(g;(a,b)\bigr) \\
			&= \TV\bigl(g;(a,x'_\eps]\bigr) + \TV\bigl(g;[x'_\eps,x''_\eps]\bigr)
			+ \TV\bigl(g;[x''_\eps,c]\bigr) + \TV\bigl(g;[c,b)\bigr) \\
			&\geq \abs{g^+(a)-g(x'_\eps)} + \abs{g(x'_\eps)-g(x''_\eps)} + \abs{g(x''_\eps)-g(c)}
			+ \abs{g(c)-g^+(c)} + \TV\bigl(g;(c,b)\bigr) \\
			&\geq \abs{g^+(a)-m} + \abs{m-M} + \abs{M-g(c)} + \abs{g(c)-g^+(c)} + \TV\bigl(g;(c,b)\bigr) - 4\eps \\
			&= \abs{g^+(a)-m} + \abs{m-\ell} + \abs{\ell-M} + \abs{M-g(c)} + \abs{g(c)-g^+(c)}
			+ \TV\bigl(g;(c,b)\bigr) - 4\eps \\
			&\geq \abs{g^+(a)-\ell} + \abs{\ell-g^+(c)} + \TV\bigl(g;(c,b)\bigr) - 4\eps \\
			&= \abs{g^+(a)-\ell} + \TV\bigl(u;(a,b)\bigr) - 4\eps.
		\end{split}
	\]
	The thesis follows letting $\eps\to0^+$.
\end{proof}

\begin{theorem}\label{thm:not-min-alpha-growth}
	Let $g\in\BV\bigl((a,b)\bigr)$ be a function with the property that there exist $\alpha,c>0$ and
	$0<\delta<b-a$ such that $g(x)-g^+(a)\geq c(x-a)^\alpha$ for all $x\in(a,a+\delta]$.
	If
	\begin{equation}\label{eq:gamma-p-alpha}
		\gamma > \left(1+\frac1{p\alpha}\right)^{-1} = \frac{p\alpha}{p\alpha+1}
	\end{equation}
	then $g$ is not a minimizer of the functional
	\[
		F(u) = \TV(u) + \lambda\norm{u-g}_p^\gamma
	\]
	for every $p\in[1,\infty]$ and $\lambda>0$.

	If instead equality holds in \eqref{eq:gamma-p-alpha}, the same conclusion is true provided that
	\begin{equation}\label{eq:lambda-small}
		\lambda
		< K(p,\alpha)^{-\gamma/p} c^{\gamma/(p\alpha)}
		= K(p,\alpha)^{-\alpha/(1+p\alpha)} c^{1/(1+p\alpha)}.
	\end{equation}
\end{theorem}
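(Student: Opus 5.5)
We construct a competitor that truncates $g$ from below near the left endpoint $a$, in the spirit of the proof of \autoref{thm:not-min-cantor-function}. Since $g$ is a minimizer candidate, $F(g)=\TV(g)$. For small $h>0$ we set $x_h=a+(h/c)^{1/\alpha}$, so that $x_h\le a+\delta$ for $h$ small. We define
\[
	u_h = g\vee\bigl(g^+(a)+h\bigr) \quad\text{on }(a,x_h], \qquad u_h=g \text{ elsewhere}.
\]
Since $g(x)\geq g^+(a)+c(x-a)^\alpha\geq g^+(a)+h$ for $x\in[x_h,a+\delta]$, this agrees with the global truncation $u_h=g\vee(g^+(a)+h)$ restricted to a neighbourhood of $a$. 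Alternatively, and more cleanly, we take $u_h$ as in \autoref{lem:truncation} with $c$ replaced by $x_h$ and $\ell=g^+(a)+h$. The hypothesis $\inf_{(a,x_h]}g\le\ell\le\sup_{(a,x_h]}g$ holds: the supremum is at least $g(x_h)\ge\ell$, and the infimum is at most $\lim_{x\to a^+}g=g^+(a)<\ell$. \autoref{lem:truncation} then gives
\[
	\TV(u_h)\le \TV(g)-h .
\]

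Next we estimate the fidelity term. On $(a,x_h)$ we have $g\ge g^+(a)+c(x-a)^\alpha$. The region where $u_h\neq g$ and $g<\ell$ lies in $\{x\in(a,x_h): g(x)<\ell\}$, and there
\[
	0\le \ell-g(x)\le h-c(x-a)^\alpha .
\]
Points where $g>\ell$ inside $(a,x_h)$ are impossible for the lower bound alone, but $g$ may exceed $\ell$ there. In that case $u_h-g=\ell-g<0$, which is not controlled by the growth hypothesis. This is why I prefer the truncation $u_h=g\vee\ell$ on $(a,x_h]$, leaving $g$ unchanged elsewhere. Its total variation is still at most $\TV(g)-h$ by a direct argument: on $(a,x_h]$ the truncated function has variation at most that of $g$ minus the descent to $g^+(a)$, and the junction at $x_h$ is continuous because $g(x_h)\ge\ell$. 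Its error satisfies $0\le u_h-g\le (h-c(x-a)^\alpha)^+$. Then \autoref{lem:holder-integral} yields
\[
	\norm{u_h-g}_p^p\le K(p,\alpha)c^{-1/\alpha}h^{p+1/\alpha}
\]
for $p<\infty$, and $\norm{u_h-g}_\infty\le h$.

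Putting these together,
\[
	F(u_h)\le \TV(g)-h+\lambda K(p,\alpha)^{\gamma/p}c^{-\gamma/(p\alpha)}h^{\gamma(1+1/(p\alpha))}.
\]
Under \eqref{eq:gamma-p-alpha} the exponent $\gamma(1+1/(p\alpha))$ exceeds $1$, so $F(u_h)<F(g)$ for small $h$. In the equality case the exponent equals $1$, and we need $\lambda K^{\gamma/p}c^{-\gamma/(p\alpha)}<1$, which is precisely \eqref{eq:lambda-small}.

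The main obstacle is the rigorous bound $\TV(u_h)\le\TV(g)-h$ for the local truncation. I would prove it by taking partitions and using that $\TV(g;(a,x_h])\ge \TV(g\vee\ell;(a,x_h])+(\ell-g^+(a))$. This follows by inserting points near $a$, where $g$ is close to $g^+(a)$, mirroring the proof of \autoref{lem:truncation} with \autoref{prop:tv-composition} applied to the $1$-Lipschitz monotone map $t\mapsto t\vee\ell$.
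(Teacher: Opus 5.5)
Your proof is correct. It ends at the same inequality as the paper, $F(\cdot)-F(g)\leq -h+\lambda K(p,\alpha)^{\gamma/p}c^{-\gamma/(p\alpha)}h^{\gamma(1+1/(p\alpha))}$, but it uses a different competitor.

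The paper fixes $y$ near $a$ and replaces $g$ on $(a,y]$ by the constant $\omega(y)=\sup_{(a,y]}g$. This constant lies above $g$, so the error satisfies $0\leq\omega(y)-g\leq h-c(x-a)^\alpha$ with $h=\omega(y)-g^+(a)$. The bound $y-a\leq(h/c)^{1/\alpha}$ follows from $h\geq c(y-a)^\alpha$, and the gain $\TV(g)\geq\TV(u_y)+h$ is exactly \autoref{lem:truncation} with $\ell=\sup_{(a,y]}g$. Taking the level to be a supremum, rather than prescribing it, is what avoids the difficulty you noticed with $\ell=g^+(a)+h$, where $g$ may exceed $\ell$. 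The price is that $h$ is only implicitly tied to $y$, so one needs $\omega^+(a)=g^+(a)$ to send $h\to0$.

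You instead clip from below at a freely chosen level, as in \autoref{thm:not-min-cantor-function}. This makes the fidelity estimate immediate, since $(\ell-g)^+\leq\bigl(h-c(x-a)^\alpha\bigr)^+$ vanishes outside $(a,x_h)$. However, \autoref{lem:truncation} no longer supplies the variation gain, and \autoref{prop:tv-composition} only gives $\TV(g\vee\ell)\leq\TV(g)$ with no gain. Your sketch for that step is therefore not yet an argument.

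The clean way to close it is the pointwise identity $\abs{s-t}=\abs{s\vee\ell-t\vee\ell}+\abs{s\wedge\ell-t\wedge\ell}$.
\begin{itemize}
\item Applied on a common refinement of two partitions, it gives $\TV(g;J)\geq\TV(g\vee\ell;J)+\TV(g\wedge\ell;J)$ for $J=(a,x_h]$.
\item Since $g(x_h)\geq\ell$ and $g(z)\to g^+(a)<\ell$ as $z\to a^+$, one has $\TV(g\wedge\ell;J)\geq\ell-g^+(a)=h$.
\item Splitting $\TV$ at $x_h$, where $u_h(x_h)=g(x_h)$, then yields $\TV(u_h)\leq\TV(g)-h$.
\end{itemize}
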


\begin{proof}
	Define the non-decreasing function $\omega:(a,a+\delta]\to\setR$
	\[
		\omega(x) = \sup_{(a,x]} g
	\]
	and observe that we have $\omega(x) \geq g(x) \geq g^+(a) + c(x-a)^\alpha$ and $\omega^+(a)=g^+(a)$.
	Given $y\in(a,a+\delta]$, introduce the auxiliary parameter $h=h(y)=\omega(y)-g^+(a)\geq c(y-a)^\alpha>0$
	and define the truncated function $u_y\in\BV\bigl((a,b)\bigr)$
	\[
		u_y(x) = \begin{cases} \omega(y) & x\in(a,y], \\ g(x) & x\in(y,b), \end{cases}
	\]
	which, thanks to \autoref{lem:truncation} applied with $c=y$ and $\ell=\omega(y)$, satisfies
	\[
		\TV\bigl(g,(a,b)\bigr)
		\geq \TV\bigl(u_y,(a,b)\bigr) + \abs{\omega(y)-g^+(a)}
		= \TV\bigl(u_y,(a,b)\bigr) + h.
	\]
	For every $x\in(a,y]$ we have $g^+(a)+c(x-a)^\alpha \leq g(x) \leq \omega(y) = g^+(a) + h = u_y(x)$,
	therefore $\abs{u_y(x)-g(x)} = u_y(x)-g(x) \leq h-c(x-a)^\alpha$.
	Moreover from $c(y-a)^\alpha\leq h$ we deduce $y\leq a+(h/c)^{1/\alpha}$, hence by
	\autoref{lem:holder-integral} we get
	\[
		\norm{u_y-g}_{L^p\bigl((a,b)\bigr)}^p
		\leq \int_a^y \bigl(h-c(x-a)^\alpha\bigr)^p \dx
		\leq \int_a^{a+(h/c)^{1/\alpha}} \bigl(h-c(x-a)^\alpha\bigr)^p \dx
		= K(p,\alpha) c^{-1/\alpha} h^{p+1/\alpha}.
	\]
	Combining the estimates for the total variation and fidelity term we obtain
	\[
		F(u_y)-F(g)
		\leq -h + \lambda K(p,\alpha)^{\gamma/p} c^{-\gamma/(p\alpha)} h^{\gamma\bigl(1+\frac{1}{p\alpha}\bigr)},
	\]
	which is negative for $y$ sufficiently small because $h(y)\to0^+$ for $y\to a^+$ and
	the exponent of $h$ in the last term is greater than $1$ by \eqref{eq:gamma-p-alpha},
	hence the negative linear term dominates.

	If instead $\gamma$ satisfies the equality in \eqref{eq:gamma-p-alpha}, the exponent of $h$ is $1$ and
	the negativity follows from \eqref{eq:lambda-small}.
\end{proof}

The following remark highlights a strong structural property of the minimizers of $F$ when the input
is monotone.

\begin{remark}
	Assume that the input $g$ is non-decreasing and let $u$ be a minimizer of
	$F(u) = \TV(u) + \lambda\norm{u-g}_p^\gamma$; $u$ must be non-decreasing. Define
	\begin{align*}
		x'  & = \sup\set{x\in(a,b)}{g\leq u^+(a) \text{ in } (a,x)}, \\
		x'' & = \inf\set{x\in(a,b)}{g\geq u^-(b) \text{ in } (x,b)},
	\end{align*}
	and the non-decreasing regular function
	\[
		\bar u(x) = \begin{cases}
			u^+(a) & x\in(a,x'],   \\
			g(x)   & x\in(x',x''), \\
			u^-(b) & x\in[x'',b).
		\end{cases}
	\]
	We have $\TV(\bar u)=u^-(b)-u^+(a)=\TV(u)$.
	In $(a,x')$ we have $g\leq\bar u\leq u$, whereas in $(x'',b)$ the reverse inequalities hold, therefore
	\begin{align*}
		\norm{\bar u-g}_{L^p\bigl((a,x')\bigr)}  & \leq \norm{u-g}_{L^p\bigl((a,x')\bigr)},  &
		\norm{\bar u-g}_{L^p\bigl((x'',b)\bigr)} & \leq \norm{u-g}_{L^p\bigl((x'',b)\bigr)}.
	\end{align*}
	In $(x',x'')$ we have $\bar u=g$, therefore
	\[
		\norm{\bar u-g}_{L^p\bigl((x',x'')\bigr)} = 0 \leq \norm{u-g}_{L^p\bigl((x',x'')\bigr)}.
	\]
	In all three cases, equality holds if and only if $u=\bar u$ in the respective intervals.
	This shows that $F(\bar u) \leq F(u)$, with equality if and only if $u=\bar u$ in $(a,b)$.

	This proves that all minimizers must be of the form $\bar u$, although we do not show uniqueness
	because the values at the endpoints $u^+(a)$ and $u^-(b)$ might not be uniquely determined.

	The main content of \autoref{thm:not-min-alpha-growth} is showing that $u^+(a)>g^+(a)$,
	hence a minimizer cannot coincide with $g$.
\end{remark}

\begin{theorem}\label{thm:every-min-constant}
	Let $g\in\BV\bigl((a,b)\bigr)$ be a non-decreasing function and let $p\in[1,\infty]$.
	If $0<\lambda<(b-a)^{-1/p}$ then every minimizer $u\in\BV\bigl((0,1)\bigr)$ of the functional
	\[
		F(u) = \TV(u) + \lambda \norm{u-g}_p
	\]
	is a constant function. Moreover, the minimizer is unique if $p>1$.
\end{theorem}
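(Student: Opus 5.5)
By \autoref{prop:increasing-minimizer} (applied with $\gamma=1$), every minimizer $u$ is non-decreasing, and by \autoref{prop:global-truncation} it takes values in $[\inf g,\sup g]$. In particular $\TV(u)=u^-(b)-u^+(a)$. The plan is to show that flattening $u$ to a constant strictly decreases $F$ unless $u$ is already constant, using only the smallness of $\lambda$.

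Fix a level $t\in[u^+(a),u^-(b)]$ and compare $u$ with the constant competitor $t$. We have $\TV(t)=0$, and by the triangle inequality in $L^p$,
\[
	F(t) = \lambda\norm{t-g}_p \le \lambda\norm{u-g}_p + \lambda\norm{u-t}_p .
\]
Since $u$ is monotone with values in $[u^+(a),u^-(b)]$, we get $\abs{u-t}\le u^-(b)-u^+(a)=\TV(u)$ pointwise. Hence
\[
	\norm{u-t}_p\le (b-a)^{1/p}\,\TV(u),
\]
with the convention $(b-a)^{1/\infty}=1$. It follows that
\[
	F(t)\le F(u) - \bigl(1-\lambda(b-a)^{1/p}\bigr)\TV(u).
\]
Since $\lambda(b-a)^{1/p}<1$, minimality of $u$ forces $\TV(u)=0$, so $u$ is constant (a.e., and everywhere after choosing the regular representative). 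This step is essentially a two-line estimate; the only care needed is that $\norm{u-t}_p$ is bounded by the oscillation, which is exactly where monotonicity from \autoref{prop:increasing-minimizer} enters. Without monotonicity one could still argue via $\sup u-\inf u\le\TV(u)$ from \eqref{eq:tv-sup-inf}, so in fact the monotonicity step may even be dispensable.

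For uniqueness when $p>1$: any minimizer is a constant $t$, so it minimizes the function $t\mapsto\norm{t-g}_p$ over $\setR$. For $1<p<\infty$ this map is strictly convex, since $L^p$ is strictly convex and the functions $t-g$ for distinct $t$ are not positively proportional unless $g$ is constant; when $g$ is constant the minimizer is $g$ itself. Hence the minimizing $t$ is unique. The main obstacle I expect is the case $p=\infty$, where $t\mapsto\norm{t-g}_\infty=\max\{t-\essinf g,\esssup g-t\}$ is not strictly convex. It is still uniquely minimized at the midpoint $(\essinf g+\esssup g)/2$, so uniqueness holds directly by computing this maximum of two affine functions. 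For $p=1$ the minimizers are the medians of $g$ (with respect to the normalized Lebesgue measure on $(a,b)$), which may form an interval; this is why the statement excludes $p=1$ from uniqueness.
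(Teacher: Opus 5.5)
Your argument is correct and is essentially the paper's proof: compare $u$ with a constant level $t\in[u^+(a),u^-(b)]$, use the triangle inequality in $L^p$, and bound $\norm{u-t}_p\le(b-a)^{1/p}\TV(u)$ to get $F(t)\le F(u)-\bigl(1-\lambda(b-a)^{1/p}\bigr)\TV(u)$. Your two side remarks are also correct and slightly sharpen the paper. First, by \eqref{eq:tv-sup-inf} the monotonicity of $u$, and hence the hypothesis that $g$ is non-decreasing, is not actually needed. Second, the paper's bare appeal to strict convexity of $c\mapsto\lambda\norm{c-g}_p$ is not literally valid for $p=\infty$ or for constant $g$, whereas your midpoint computation and non-proportionality argument cover these cases.
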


\begin{proof}
	We treat the case $p<\infty$; the case $p=\infty$ follows from a straightforward adaptation.
	Let $u\in\BV\bigl((a,b)\bigr)$ be a minimizer. By \autoref{prop:increasing-minimizer} $u$ is non-decreasing.
	Moreover, by \autoref{prop:global-truncation} we have $g^+(a)\leq u\leq g^-(b)$.

	Let $c\in[u^+(a),u^-(b)]$ and define $h=c-u^+(a)\geq0$ and $k=u^-(b)-c\geq0$.
	Using the fact that
	\[
		\norm{u-c}_p \leq (b-a)^{1/p}\norm{u-c}_\infty = (b-a)^{1/p}\max\{h,k\} \leq (b-a)^{1/p}(h+k)
	\]
	we can estimate
	\[
		\begin{split}
			F(u) - F(c)
			&= \TV(u) - \TV(c) + \lambda(\norm{u-g}_p-\norm{c-g}_p)
			\geq u^-(b)-u^+(a) - \lambda\norm{u-c}_p \\
			&\geq h+k - \lambda(b-a)^{1/p}(h+k)
			= \bigl(1-\lambda(b-a)^{1/p}\bigr)(h+k).
		\end{split}
	\]
	Under the assumption on $\lambda$, the inequality $F(u)\geq F(c)$ is strict unless $h+k=0$,
	i.e.\ $h=k=0$, that is $u\equiv c$. This proves that every minimizer is constant.

	If $p>1$, then $F(c)=\lambda\norm{c-g}_p$ is strictly convex, therefore the minimizer is unique.
\end{proof}

\subsection{The input is the minimizer}\label{sec:linear-min}

In this section we find conditions that ensure that the input $g$ is the minimizer.
This has the interesting consequence that,
under the assumptions of either \autoref{thm:min-flat}, \autoref{thm:min-monotone}
or \autoref{thm:min-piecewise-monotone}, if $D^cg\neq0$, then also $D^cu\neq0$ where $u$ is the
minimizer of $F(u) = \TV(u) + \lambda\norm{u-g}_p^\gamma$,
showing that in general minimizers can have the Cantor part of the derivative.

\begin{theorem}\label{thm:min-flat}
	Let $g\in\BV\bigl((0,1);[0,1]\bigr)$ be a non-decreasing function with $g\equiv0$ on $(0,\delta)$ and $g\equiv1$ on $(1-\delta,1)$ for some $\delta\in(0,1/2)$, and let $\lambda,\gamma>0$, $p\in[1,\infty]$.
	If $\gamma\leq1$ and $\lambda\delta^{\gamma/p}>2$ then $u=g$ is the unique minimizer of the functional $F:\BV\bigl((0,1)\bigr)\to\setR$ given by
	\[
		F(u) = \TV(u) + \lambda\norm{u-g}_p^\gamma.
	\]
\end{theorem}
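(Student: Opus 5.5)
Let $u$ be a minimizer; we show $u=g$. First we reduce to monotone, bounded competitors. By \autoref{prop:global-truncation} we have $0\leq u\leq1$. By \autoref{prop:increasing-minimizer}, $u$ is non-decreasing. Set $h=u^+(0)\geq0$ and $k=1-u^-(1)\geq0$. Then $\TV(u)=u^-(1)-u^+(0)=1-h-k$, while $F(g)=\TV(g)=1$. It therefore suffices to show that
\[
	\lambda\norm{u-g}_p^\gamma \geq h+k,
\]
with strict inequality (or equality forcing $u=g$).

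The key step is a lower bound on the fidelity that only uses the flat zones. On $(0,\delta)$ we have $g=0$ and $u\geq h$, so $\abs{u-g}\geq h$ there. On $(1-\delta,1)$ we have $g=1$ and $u\leq1-k$, so $\abs{u-g}\geq k$ there. Hence for $p<\infty$
\[
	\norm{u-g}_p^p \geq \delta(h^p+k^p) \geq \delta\max\{h,k\}^p,
\]
and so $\norm{u-g}_p\geq\delta^{1/p}\max\{h,k\}\geq\delta^{1/p}(h+k)/2$. The same bound holds for $p=\infty$ with $\delta^{1/p}=1$. This is a version of \autoref{lem:fidelity lower bound} with $\alpha\to\infty$ in spirit, but here direct.

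Next we use $\gamma\leq1$ to compare with the linear loss. Since $h+k\leq1$ and $\gamma\leq1$, we get $(h+k)^\gamma\geq h+k$. Therefore
\[
	\lambda\norm{u-g}_p^\gamma \geq \lambda\delta^{\gamma/p}2^{-\gamma}(h+k)^\gamma \geq \frac{\lambda\delta^{\gamma/p}}{2}(h+k),
\]
using $2^{-\gamma}\geq 1/2$. By the hypothesis $\lambda\delta^{\gamma/p}>2$, this strictly exceeds $h+k$ whenever $h+k>0$. So $F(u)>F(g)$ unless $h=k=0$, which contradicts minimality.

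In the remaining case $h=k=0$, $u$ is non-decreasing with $\TV(u)=1=\TV(g)$, so minimality forces $\norm{u-g}_p=0$. Thus $u=g$ a.e., and as good representatives they coincide.

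The main obstacle I expect is ensuring that a minimizer exists and that the reduction via \autoref{prop:increasing-minimizer} applies within the class considered. An alternative is to bypass minimizers and argue directly against an arbitrary competitor $v$, showing $F(v)>F(g)$ for $v\neq g$. For this I would replace $v$ by its truncation to $[0,1]$ and then by its monotone rearrangement (\autoref{prop:rearrangement-contraction}), which do not increase $F$. I would then run the same estimate, tracking the equality cases to get uniqueness. One must also handle the possibility $h+k>1$; this is excluded after truncation, since it forces $u^+(0)\leq u^-(1)$.
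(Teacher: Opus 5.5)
Your proof is correct and follows the same route as the paper. Both reduce to a non-decreasing minimizer with values in $[0,1]$, lose $h+k$ in total variation, bound the fidelity from below by $\delta^{1/p}$ times a mean of $(h,k)$ using only the flat zones $(0,\delta)$ and $(1-\delta,1)$, and then use $\gamma\leq1$ (the paper via $M_p(h,k)^\gamma\geq M_p(h,k)\geq M_1(h,k)$, you via $\max\{h,k\}\geq (h+k)/2$ and $(h+k)^\gamma\geq h+k$) together with $\lambda\delta^{\gamma/p}>2$ to force $h=k=0$ and hence $u=g$. The paper additionally shows that $u$ is constant on the flat zones, which your direct appeal to monotonicity ($h\leq u\leq 1-k$) makes unnecessary.
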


\begin{proof}
	Let $u\in\BV\bigl((0,1)\bigr)$ be a minimizer. By \autoref{prop:increasing-minimizer} $u$ is non-decreasing. Moreover we have $0\leq u\leq1$ because $F(0\vee u\wedge1)\leq F(u)$ with strict inequality otherwise.

	We claim that $u\equiv u(0^+)$ on $(0,\delta)$ and $u\equiv u(1^-)$ on $(1-\delta,1)$. In fact if this is not the case then the function
	\[
		\tilde u(x)
		= \begin{cases}
			u(0^+) & x \in (0, \delta)        \\
			u(x)   & x \in (\delta, 1-\delta) \\
			u(1^-) & x \in (1-\delta, 1)
		\end{cases}
	\]
	has the property $\TV(\tilde u)\leq\TV(u)$ and $\norm{\tilde u-g}_p<\norm{u-g}_p$.

	We have the estimates $\norm{u-g}_\infty \geq \max\{u(0^+),1-u(1^-)\}$ and
	\[
		\begin{split}
			\norm{u-g}_p^p
			&\geq \int_{(0,\delta)\cup(1-\delta,1)} \abs{u(x)-g(x)}^p \d x \\
			&= \int_0^\delta u(0^+)^p \d x + \int_{1-\delta}^1 \bigl(1-u(1^-)\bigr)^p \d x
			= \delta \bigl[ u(0^+)^p + \bigl(1-u(1^-)\bigr)^p \bigr].
		\end{split}
	\]
	Letting $h=u(0^+)$ and $k=1-u(1^-)$, these can be summarized as $\norm{u-g}_p \geq \delta^{1/p}M_p(h,k)$
	where $M_p$ is the $p$-mean.\footnote{See \autoref{def:p-mean}.} Notice that $M_p(h,k)\leq1$, therefore
	$M_p(h,k)^\gamma\geq M_p(h,k)\geq M_1(h,k)$ for $\gamma\in(0,1]$.
	Using these estimates and $F(g)=1$ we can bound from below
	\[
		F(u)
		= u(1^-)-u(0^+) + \lambda\norm{u-g}_p^\gamma
		\geq 1 - (h+k) + \lambda\delta^{\gamma/p}M_p(h,k)^\gamma
		\geq F(g) + (\lambda\delta^{\gamma/p}-2) M_1(h,k)
	\]
	and this inequality is strict unless $M_1(h,k)=0$, i.e. $h=k=0$, that is $u(0^+)=0$ and $u(1^-)=1$.

	In this case $\TV(u)=u(1^-)-u(0^+)=1$, hence $F(u)=1+\lambda\norm{u-g}_p^\gamma=F(g)+\lambda\norm{u-g}_p^\gamma$ and for $u$ to be a minimizer we must have $u=g$.
\end{proof}

The next theorem treats the case of a monotone input. This will be later generalized to piecewise
monotone functions in \autoref{thm:min-piecewise-monotone}, with the additional difficulty of having
to deal with the jumps between the subintervals of monotonicity.

\begin{theorem}\label{thm:min-monotone}
	Let $g\in\BVreg\bigl((a,b)\bigr)$ be a non-decreasing function
	with at most $\alpha$-growth at the endpoints $a$ and $b$,
	i.e.\ there exist $\alpha>0$ and $C>0$ such that $g(x)-g^+(a)\leq C(x-a)^\alpha$ and
	$g^-(b)-g(x)\leq C(b-x)^\alpha$ for all $x\in(a,b)$.
	If the conditions
	\begin{align*}
		\gamma  & \leq \left(1+\frac1{p\alpha}\right)^{-1} = \frac{p\alpha}{p\alpha+1} &
		        & \text{and}                                                           &
		\lambda & > 2^{\frac{(p-1)\alpha+1}{p\alpha+1}}
		K(p,\alpha)^{-\gamma/p} C^{\gamma/(p\alpha)} H^{1-\gamma\bigl(1+1/(p\alpha)\bigr)}
	\end{align*}
	are satisfied, then $g$ is essentially the unique minimizer of the functional
	\begin{gather*}
		F     : \BV\bigl((x_0,x_n)\bigr)\to[0,\infty)  \\
		F(u)  = \TV(u) + \lambda\norm{u-g}_p^\gamma,
	\end{gather*}
	in the sense that any other minimizer coincides with $g$ outside of the jump set of $g$.
\end{theorem}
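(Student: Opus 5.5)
The proof follows the pattern of \autoref{thm:min-flat}, with \autoref{lem:fidelity lower bound} supplying the fidelity lower bound. I read $H$ as the total height $H=g^-(b)-g^+(a)=\TV(g)$ and the domain as $(a,b)$. Let $u$ be a minimizer.

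First, by \autoref{prop:increasing-minimizer}, $u$ is non-decreasing. By \autoref{prop:global-truncation}, $g^+(a)\le u\le g^-(b)$. Set
\[
	h=u^+(a)-g^+(a)\ge0,\qquad k=g^-(b)-u^-(b)\ge0 .
\]
Since $u$ is monotone, $\TV(u)=u^-(b)-u^+(a)=H-(h+k)$. Moreover $h=\inf u-g^+(a)$ and $k=g^-(b)-\sup u$. The growth hypotheses are exactly those of \autoref{lem:fidelity lower bound}, so
\[
	\norm{u-g}_p^p\ge K(p,\alpha)C^{-1/\alpha}\bigl(h^{p+1/\alpha}+k^{p+1/\alpha}\bigr).
\]
For $p=\infty$ the analogous bound $\norm{u-g}_\infty\ge\max\{h,k\}$ holds directly. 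Setting $\beta=p+1/\alpha$, this gives
\[
	\norm{u-g}_p^\gamma\ge K^{\gamma/p}C^{-\gamma/(p\alpha)}\,2^{\gamma/p}M_\beta(h,k)^{\gamma\beta/p}.
\]
The exponent is $\gamma\beta/p=\gamma(1+1/(p\alpha))\le1$ by hypothesis.

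Next I compare with $F(g)=H$:
\[
	F(u)-F(g)\ge -(h+k)+\lambda K^{\gamma/p}C^{-\gamma/(p\alpha)}2^{\gamma/p}M_\beta(h,k)^{\gamma\beta/p}.
\]
Since $h,k\le H$, we have $M_\beta(h,k)\le H$. With $e=\gamma\beta/p\le1$ this gives $M_\beta^e\ge H^{e-1}M_\beta$, and also $M_\beta\ge M_1=(h+k)/2$. The result is a lower bound of the form $(h+k)\bigl(-1+\lambda\,c\,H^{e-1}\bigr)$, with an explicit constant $c$ built from powers of $2$. The main bookkeeping step is matching the power of $2$ to the stated $2^{\frac{(p-1)\alpha+1}{p\alpha+1}}$. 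One should not use the crude $M_\beta\ge M_1$ naively but track the sharper constant. For instance, minimizing $(h^\beta+k^\beta)^{\gamma/p}/(h+k)$ at fixed $h+k$ occurs at $h=k$, which gives the factor $2^{\gamma/p}\cdot2^{-\gamma\beta/p}$, to be combined with $H^{e-1}$. At the critical $\gamma$ this yields exactly the stated exponent of $2$. Under the stated lower bound on $\lambda$, the inequality then gives $F(u)>F(g)$ unless $h=k=0$.

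Finally, if $h=k=0$ then $\TV(u)=H=\TV(g)$, so $F(u)=F(g)+\lambda\norm{u-g}_p^\gamma$. Minimality forces $u=g$ a.e. Both functions are monotone, hence regulated. Since $g$ is regular, \autoref{prop:basic-properties} gives that $u$ and $g$ coincide at every continuity point of $g$, i.e.\ outside $J_g$ (wherever $u$ is also regular). At points of $J_g$, $u$ can take any value between the one-sided limits without changing $\TV$, which accounts for the \enquote{essentially unique} statement. The delicate step is the sharp constant in the second paragraph, especially when $\gamma$ is strictly below the threshold and the factor $H^{e-1}$ must absorb the slack.
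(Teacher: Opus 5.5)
Your route is the paper's: monotone, truncated minimizer; $\TV(u)=H-(h+k)$; \autoref{lem:fidelity lower bound}; a power-mean estimate forcing $h=k=0$; then $u=g$ off $J_g$. However, the step you flag as delicate is a genuine gap, not just bookkeeping. The estimate you actually write uses only $h,k\le H$, hence $M_\beta(h,k)\le H$. Together with $M_\beta\ge M_1$ it gives the threshold $\lambda>2^{1-\gamma/p}K(p,\alpha)^{-\gamma/p}C^{\gamma/(p\alpha)}H^{1-e}$, with your $e=\beta\gamma/p$. Because $\gamma/p\le 1/\beta$, the factor $2^{1-\gamma/p}$ equals the stated $2^{1-1/\beta}=2^{\frac{(p-1)\alpha+1}{p\alpha+1}}$ only at the critical exponent $\gamma=p\alpha/(p\alpha+1)$. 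For smaller $\gamma$ it is strictly larger, so for $\lambda$ between the two thresholds your inequality does not force $h=k=0$. The factor $H^{e-1}$ cannot ``absorb the slack'', since it appears identically in both thresholds. Your fixed-sum variant has the same problem: the step $(h+k)^e\ge H^{e-1}(h+k)$ requires $h+k\le H$, and with only $h+k\le 2H$ you fall back to $2^{1-\gamma/p}$.

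The missing ingredient is $h+k\le H$, which you already have, since $\TV(u)=H-(h+k)\ge0$. By superadditivity of $t\mapsto t^\beta$ it gives $(h^\beta+k^\beta)^{1/\beta}\le h+k\le H$, i.e.\ $M_\beta(h,k)\le 2^{-1/\beta}H$. Hence $M_\beta^e\ge 2^{(1-e)/\beta}H^{e-1}M_\beta$, and since $\gamma/p+(1-e)/\beta=1/\beta$, your first computation then yields exactly the constant $2^{1-1/\beta}$. This is precisely what the paper's inequality $(h^\beta+k^\beta)^{\gamma/p}\ge H^{\beta\gamma/p-1}(h^\beta+k^\beta)^{1/\beta}$ relies on, even though it only cites $h,k\le H$. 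Alternatively, in your fixed-sum route, $s=h+k\le H$ gives $(h^\beta+k^\beta)^{\gamma/p}\ge 2^{-(\beta-1)\gamma/p}H^{e-1}(h+k)$. Its constant satisfies $2^{(\beta-1)\gamma/p}\le 2^{1-1/\beta}$, so the stated hypothesis on $\lambda$ suffices a fortiori. With this fix the rest of your argument is fine. A non-decreasing $u$ is automatically regular at interior points, so your parenthetical caveat in the last step is unnecessary.
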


\begin{figure}
	\centering
	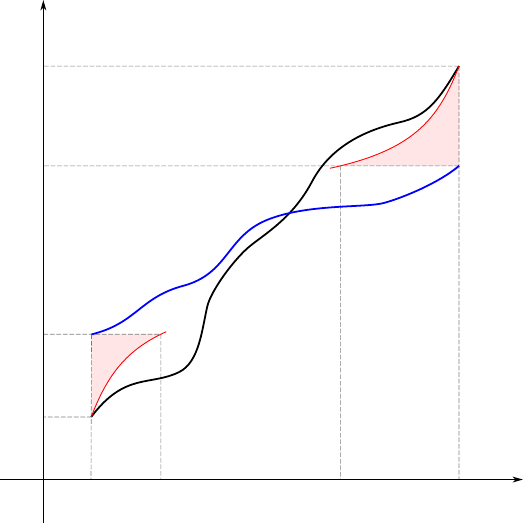
	\caption{Visual aid for the proof of \autoref{thm:min-monotone}.
		The two red arcs are $g(a)+C(x-a)^\alpha$ and $g(b)-C(b-x)^\alpha$,
		which bound $g$ from above and below respectively.
		The shaded regions are the two contributions considered in the estimate of $\norm{u-g}_p^p$.}
	\label{fig:holder}
\end{figure}

\begin{proof}
	Let $u$ be a minimizer and let $H=g^-(b)-g^+(a)$.
	From \autoref{prop:global-truncation} and \autoref{prop:increasing-minimizer} we know that
	$u$ is non-decreasing and $g^+(a)\leq u(x)\leq g^-(b)$ for every $x\in(a,b)$.
	Let $u^+(a)=g^+(a)+h$ and $u^+(b)=g^+(b)-k$ with $0\leq h,k\leq H$.
	Our first goal is to prove that $h=k=0$.
	With regard to the total variation, we have $\TV(u)=u^-(b)-u^+(a)=g^-(b)-g^+(a)-(h+k)=\TV(g)-(h+k)$.
	With regard to the fidelity term, from \autoref{lem:fidelity lower bound} we have that the
	estimate \eqref{eq:fidelity lower bound} holds, that is
	\[
		\norm{u-g}_p^p
		\geq K(p,\alpha) C^{-1/\alpha} \bigl(h^{p+1/\alpha}+k^{p+1/\alpha}\bigr).
	\]
	For convenience, introduce the parameter $\beta=p+1/\alpha>1$ appearing as exponent in the
	previous estimate.
	Recalling that $h,k\leq H$, if $\gamma\leq p/\beta=\frac{p\alpha}{p\alpha+1}$ by the inequalities
	between means we have
	\begin{align*}
		h+k                                    & \leq 2^{1-1/\beta}\bigl(h^\beta+k^\beta\bigr)^{1/\beta},       &
		\bigl(h^\beta+k^\beta\bigr)^{\gamma/p} & \geq H^{\beta\gamma/p-1}\bigl(h^\beta+k^\beta\bigr)^{1/\beta}.
	\end{align*}
	Combining these estimates leads to
	\[
		\begin{split}
			0
			&\geq F(u)-F(g)
			\geq -(h+k)
			+\lambda K(p,\alpha)^{\gamma/p} C^{-\gamma/(p\alpha)} \bigl(h^\beta+k^\beta\bigr)^{\gamma/p} \\
			&\geq \left[\lambda K(p,\alpha)^{\gamma/p} C^{-\gamma/(p\alpha)} H^{\beta\gamma/p-1}
				- 2^{1-1/\beta}\right]
			\bigl(h^\beta+k^\beta\bigr)^{1/\beta}.
		\end{split}
	\]
	The assumption on $\gamma$ implies that the coefficient in the square brackets is positive,
	therefore we must have $h=k=0$.

	Now that we have $u^+(a)=g^+(a)$ and $u^-(b)=g^-(b)$, we conclude by noticing that $\TV(u)=\TV(g)$, hence
	$F(u)-F(g)=\lambda\norm{u-g}^\gamma\geq0$, with equality if and only if $u=g$.
\end{proof}

\begin{proposition}\label{prop:u_contained_in_g}
	Let $x_0<\dots<x_n$ and let $g\in\BV\bigl((x_0,x_n)\bigr)$ be a regular function which is monotone
	in every interval $I_i=(x_{i-1},x_i)$.
	If the conditions
	\begin{align*}
		p       & \geq 1,                                     &
		\gamma  & \leq 1,                                     &
		\lambda & > 2/G(I_i) \quad \forall i\in\{1,\dots,n\},
	\end{align*}
	are satisfied, where for every $J\subseteq I\coloneqq(x_0,x_n)$
	\[
		G(J) = \frac{\gamma}{\bigl(H\leb^1(I)^{1/p}\bigr)^{1-\gamma}}
		\begin{cases}
			\leb^1(J)^{1/p}
			 & \text{$p=1$ or $g\rvert_{J}$ constant}, \\
			p \left(\frac{\norm{g-\inf_{J}g}_{L^{p-1}(J)}}{\norm{g-\inf_{J}g}_{L^p(I)}}
			\wedge
			\frac{\norm{g-\sup_{J}g}_{L^{p-1}(J)}}{\norm{g-\sup_{J}g}_{L^p(I)}}\right)^{p-1}
			 & \text{otherwise},
		\end{cases}
	\]
	then any minimizer $u\in\BV\bigl((x_0,x_n)\bigr)$ of the functional
	\begin{gather*}
		F    : \BV\bigl((x_0,x_n)\bigr)\to[0,\infty)  \\
		F(u) = \TV(u) + \lambda\norm{u-g}_p^\gamma.
	\end{gather*}
	satisfies
	\begin{align*}
		\inf_{I_i} g \leq \inf_{I_i} u & \leq \sup_{I_i} u \leq \sup_{I_i} g &
		                               & \forall i\in\{1,\dots,n\}.
	\end{align*}
\end{proposition}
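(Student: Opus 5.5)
We argue by contradiction, via a local truncation competitor. Suppose $u$ is a minimizer and that on some interval $I_i$ we have, say, $\sup_{I_i}u>\sup_{I_i}g=:M_i$; the case of the infimum is symmetric. For small $t>0$ we consider the competitor
\[
	u_t = \begin{cases} u\wedge(\sup_{I_i}u - t) & \text{on } I_i,\\ u & \text{elsewhere},\end{cases}
\]
and more precisely we lower the excess $u-M_i$ on $I_i$ by the truncation level $s=\sup_{I_i}u-t\geq M_i$. Since $u$ can be assumed regular, $u_t$ is again regular. Exactly as in the proof of \autoref{lem:truncation}, combined with \autoref{rmk:BV-truncation} applied locally, truncating from above on $I_i$ decreases the total variation by at least $t$. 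Heuristically, the part of $u$ above level $s$ inside $I_i$ forms a bump whose removal saves at least $t$ for each excursion, and at least $t$ overall. If the bump touches an endpoint $x_{i-1}$ or $x_i$, the jump there may increase. Even so, the pointwise variation across the endpoint together with the interior oscillation still loses at least $t$, because $\sup_{I_i}u$ is attained (in the $\limsup$ sense) at a point of $\overline{I_i}$ and we cut it down. This is the step where regularity of $g$ and $u$ and the one-sided limits at the $x_i$ have to be handled carefully. So we aim for
\[
	\TV(u_t)\leq \TV(u)-t.
\]

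For the fidelity we need a matching upper bound
\[
	\lambda\bigl(\norm{u-g}_p^\gamma-\norm{u_t-g}_p^\gamma\bigr)\geq \tfrac{\lambda}{2}\,G(I_i)\,t + o(t),
\]
which leads to the contradiction $F(u_t)<F(u)$ as soon as $\lambda G(I_i)>2$. Two ingredients give this bound.

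\begin{enumerate}
	\item By concavity of $r\mapsto r^\gamma$ (since $\gamma\leq1$), $\norm{\cdot}_p^\gamma$ decreases at least at rate $\gamma\norm{u-g}_p^{\gamma-1}$ times the decrease of $\norm{\cdot}_p$. By \autoref{prop:global-truncation} we have $\norm{u-g}_p\leq H\leb^1(I)^{1/p}$, where $H$ is the oscillation of $g$. This produces the prefactor $\gamma/(H\leb^1(I)^{1/p})^{1-\gamma}$ in $G$.
	\item For the decrease of $\norm{u-g}_p$, we use that on the set $\{u>s\}\cap I_i$ we have $u-g\geq s-g\geq M_i-g\geq 0$. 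Lowering $u$ by up to $t$ there therefore reduces $\abs{u-g}^p$ pointwise. The first-order gain is $p\int\abs{u-g}^{p-1}\cdot(\text{decrease})$, divided by $p\norm{u-g}_p^{p-1}$.
\end{enumerate}

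Here lies the main obstacle. The set $\{u>s\}$ may have tiny measure, so the first-order gain is not directly comparable to $t$. The trick will be to first truncate $u$ entirely down to $M_i$ on $I_i$. The total variation gain is then $h=\sup_{I_i}u-M_i$, while the fidelity loss is controlled via $u-g\geq \sup_{I_i}g-g$ wherever $u$ is large. To get linear control in $h$, we use convexity in the truncation level, $\tau\mapsto\norm{u\wedge\tau-g}_p^\gamma$, so that the average slope over $[M_i,\sup u]$ is bounded below by the slope near $M_i$. There $\{u>\tau\}$ could still be small, so instead we compare with lowering $u$ on all of $I_i$ where $g$ is below $M_i$, using monotonicity of $g$ on $I_i$. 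This yields the quantity $p\bigl(\norm{g-\sup g}_{L^{p-1}(I_i)}/\norm{g-\sup g}_{L^p(I)}\bigr)^{p-1}$ in $G$, and in the case $p=1$ or $g$ constant on $I_i$ simply $\leb^1(I_i)^{1/p}$.

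The factor $2$ in $\lambda>2/G$ absorbs the possibility that both the upper and lower truncations, or both endpoint jumps, interact. Once the estimates are assembled, we conclude that $h=0$, which is exactly the claimed bound $\sup_{I_i}u\leq\sup_{I_i}g$.
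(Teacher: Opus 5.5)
\textbf{There is a genuine gap.} It sits in the step you treat as easy: the estimate $\TV(u_t)\leq\TV(u)-t$ is false in general, and in the critical case it has the wrong sign.

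\emph{First counterexample.} Take an interior interval $I_i$ and a $u$ that is constant, equal to some $c>\sup_{I_i}g$, on a neighbourhood of $\overline{I_i}$. Lowering $u$ on $I_i$, whether by $t$ or down to $M_i$, creates two new jumps at $x_{i-1}$ and $x_i$. It therefore \emph{increases} the total variation by $2t$.

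\emph{Second counterexample.} Your claim that truncating down to $M_i$ gains $h=\sup_{I_i}u-M_i$ also fails. Let $u$ cross $M_i$ inside $I_i$ and then stay at height $\sup_{I_i}u$ on $I_{i+1}$. The truncation only moves the rise into a jump at $x_i$, so the variation is unchanged.

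A symptom of the problem is that your argument never genuinely uses $\lambda G(I_i)>2$. If both the variation and the fidelity decreased, $F(u_t)<F(u)$ would hold for every $\lambda>0$. The factor $2$ is not a safety margin; it is exactly the cost of the two endpoint jumps.

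\textbf{What is missing is the case distinction on which the paper's proof rests.} The paper writes it for the lower bound, with $m=\inf_{I_i}g$, competitor $\tilde u=u\vee m$ on $I_i$, and $\delta=m-\sup_{I_i}u$; your upper-bound version is symmetric.

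\emph{Case $\delta\leq0$.} Here $u$ reaches the range of $g$ inside $I_i$. The local truncation does not increase the total variation. This needs a careful endpoint analysis using the regularity of $u$ and the one-sided limits at $x_{i-1}$ and $x_i$. The fidelity strictly decreases because $u<m$ on a nondegenerate interval. So no quantitative bound and no condition on $\lambda$ are needed, and your worry that $\{u>s\}$ may have tiny measure never arises.

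\emph{Case $\delta>0$.} Here $u|_{I_i}$ lies entirely below $m$, so $\tilde u\equiv m$ on $I_i$ and $\TV(\tilde u)\leq\TV(u)+2\delta$. But now $\abs{u-g}\geq\abs{\tilde u-g}+\delta$ on the \emph{whole} of $I_i$. With $f=\abs{\tilde u-g}$, the function $\Phi(\delta)=\norm{f+\delta\bm1_{I_i}}_p-\norm{f}_p$ is convex with $\Phi(0)=0$, so $\Phi(\delta)\geq\Phi'(0)\delta$; this produces the $L^{p-1}/L^p$ ratio in $G$. Combined with your correct concavity step for $r\mapsto r^\gamma$, it gives $F(u)-F(\tilde u)\geq(\lambda G(I_i)-2)\delta>0$.

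Two further points in your proposal need correcting. Your ``lowering $u$ on all of $I_i$'' is a legitimate comparison only in this second case. The convexity of $\tau\mapsto\norm{u\wedge\tau-g}_p^\gamma$ that you invoke is unjustified, since for $\gamma<1$ the outer power is concave. Convexity should instead be applied to the norm of $f+\delta\bm1_{I_i}$, which depends affinely on $\delta$.
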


\begin{proof}
	Let $H=\sup_I g-\inf_I g$.

	We may assume that $u$ is regular, otherwise regularizing it decreases strictly the total variation without
	affecting the fidelity term (see \autoref{prop:basic-properties}\ref{it:regularizations} and
	\autoref{prop:further-properties}\ref{it:regular-better}).
	We may also assume that $\inf_I g\leq\inf_I u\leq\sup_I u\leq\sup_I g$,
	thanks to \autoref{rmk:BV-truncation}.

	Exploiting the symmetry, assume by contradiction that the inequality $\inf_{I_i}g\leq\inf_{I_i}u$
	is violated for some index $i\in\{1,\dots,n\}$ and that $g$ is increasing in $I_i$.
	Letting $m=\inf_{I_i}g$, our goal is to show that the function $\tilde u$ such that $\tilde u=u$
	in $(x_0,x_n)\setminus I_i$ and $\tilde u=u\vee m$ in $I_i$ performs strictly better than $u$,
	i.e.\ $F(\tilde u)<F(u)$, therefore contradicting the absurd assumption.
	To this end we let
	\[
		\delta = \inf_{I_i}g - \sup_{I_i}u = m - \sup_{I_i}u
	\]
	and we distinguish two cases: either $\delta>0$, or $\delta\leq0$.

	\textit{Case $\delta>0$.}
	We start by showing that
	\begin{equation}\label{eq:tv-u-tildeu-delta}
		\TV\bigl(\tilde u;(x_0,x_n)\bigr) \leq \TV\bigl(u;(x_0,x_n)\bigr) + 2\delta
	\end{equation}
	We split
	\[
		\TV\bigl(u;(x_0,x_n)\bigr)
		=
		\TV\bigl(u;(x_0,x_{i-1}]\bigr)
		+ \TV\bigl(u;[x_{i-1},x_i]\bigr)
		+ \TV\bigl(u;[x_i,x_n)\bigr)
	\]
	and similarly for $\tilde u$.
	Since $\tilde u$ coincides with $u$ in $(x_0,x_{i-1}]\cup[x_i,x_n)$, we have
	\begin{align*}
		\TV\bigl(u;(x_0,x_{i-1}]\bigr) & = \TV\bigl(\tilde u;(x_0,x_{i-1}]\bigr), &
		\TV\bigl(u;[x_i,x_n)\bigr)     & = \TV\bigl(\tilde u;[x_i,x_n)\bigr).
	\end{align*}
	Let us focus on the total variations of $u$ and $\tilde u$ over the interval $[x_{i-1},x_i]$.
	Since $\sup_{I_i}u=m-\delta$, for every $\eps>0$ there exists $x^\star_\eps\in I_i$ such that
	$m-\delta-\eps\leq u(x^\star_\eps)\leq m-\delta$; moreover $\tilde u=m$ in $(x_{i-1},x_i)$.
	\begin{align*}
		\TV\bigl(u,[x_{i-1},x_i]\bigr)
		 & \geq	\abs{u(x^\star_\eps)-u(x_{i-1})} + \abs{u(x_i)-u(x^\star_\eps)}, \\
		\TV\bigl(\tilde u,[x_{i-1},x_i]\bigr)
		 & =	\abs{m-\tilde u(x_{i-1})} + \abs{\tilde u(x_i)-m}.
	\end{align*}
	From these and $\abs{u(x^\star_\eps)-m}\leq\delta+\eps$ we deduce
	$\TV\bigl(\tilde u;[x_{i-1},x_i]\bigr) \leq \TV\bigl(u;[x_{i-1},x_i]\bigr) + 2\delta + 2\eps$,
	which leads to \eqref{eq:tv-u-tildeu-delta} by letting $\eps\to0^+$.

	Let us now study the fidelity terms associated to $u$ and $\tilde u$.
	In $I\setminus I_i$ the functions $u$ and $\tilde u$ coincide,
	whereas in the interval $I_i$ we have $u\leq m-\delta<\tilde u=m\leq g$,
	hence $\abs{u-g}=g-u\geq g-m+\delta$ and $\abs{\tilde u-g}=g-\tilde u=g-m$.
	Letting $f=\abs{\tilde u-g}$ we have
	\[
		\norm{u-g}_p - \norm{\tilde u-g}_p
		\geq \norm*{f+\delta\bm1_{I_i}}_p - \norm{f}_p
		\eqqcolon \Phi(\delta).
	\]
	The function $\Phi:[0,\infty)\to\setR$ is non-negative, increasing, convex, and $\Phi(0)=0$.
	If $p=1$ then $\Phi(\delta)=\leb^1(I_i)\delta$;
	for $p>1$ instead, if $f\equiv0$ then $\Phi(\delta)=\leb^1(I_i)^{1/p}\delta$, otherwise
	\[
		\begin{split}
			\Phi'(\delta)
			&= \frac\d{\d\delta}\left(\int_I (f+\delta\bm1_{I_i})^p\right)^{\frac1p}
			= \frac1p\left(\int_I (f+\delta\bm1_{I_i})^p\right)^{\frac1p-1}
			\frac\d{\d\delta}\int_I (f+\delta\bm1_{I_i})^p \\
			&= \left(\int_I (f+\delta\bm1_{I_i})^p\right)^\frac{1-p}p
			p \int_{I_i} (f+\delta)^{p-1}
			= p\left(\frac{\norm{f+\delta}_{L^{p-1}(I_i)}}{\norm{f+\delta\bm1_{I_i}}_{L^p(I)}}\right)^{p-1},
		\end{split}
	\]
	hence
	\[
		\Phi(\delta)
		\geq \Phi'(0) \delta
		= p \left(\frac{\norm{f}_{L^{p-1}(I_i)}}{\norm{f}_{L^p(I)}}\right)^{p-1} \delta.
	\]
	In conclusion,
	\[
		\norm{u-g}_p - \norm{\tilde u-g}_p
		\geq C_i \delta
	\]
	where
	\[
		C_i = \begin{cases}
			\leb^1(I_i)^{1/p}
			 & p=1 \vee g\equiv m\ \text{in}\ I_i, \\
			\displaystyle p \left(\frac{\norm{f}_{L^{p-1}(I_i)}}{\norm{f}_{L^p(I)}}\right)^{p-1}
			 & \text{otherwise}.
		\end{cases}
	\]

	Since $0\leq\norm{\tilde u-g}_p\leq\norm{u-g}_p\leq H\leb^1(I)^{1/p}$, from the previous inequality
	and Lagrange's Mean Value Theorem we deduce
	\begin{equation}\label{eq:u-tildeu-fidelity}
		\norm{u-g}_p^\gamma - \norm{\tilde u-g}_p^\gamma
		\geq \frac{\gamma C_i \delta}{\bigl(H\leb^1(I)^{1/p}\bigr)^{1-\gamma}}
		\geq G(I_i)\delta.
	\end{equation}

	Combining the estimates \eqref{eq:tv-u-tildeu-delta} for the total variation and
	\eqref{eq:u-tildeu-fidelity} for the fidelity term, we get
	\[
		0 \geq F(u)-F(\tilde u)
		= \bigl(\TV(u)-\TV(\tilde u)\bigr)
		+ \lambda\left( \norm{u-g}_{L^p(I)}^\gamma - \norm{\tilde u-g}_{L^p(I)}^\gamma \right)
		\geq (\lambda G(I_i)-2)\delta,
	\]
	which is a contradiction because both factors in the right hand side are strictly positive.

	\textit{Case $\delta\leq0$.}
	We treat the total variation and the fidelity term separately.
	With regard to the total variation, our goal is to show that
	\begin{equation}\label{eq:tv-u-tildeu}
		\TV\bigl(\tilde u;(x_0,x_n)\bigr)\leq\TV\bigl(u;(x_0,x_n)\bigr).
	\end{equation}
	By hypothesis $\sup_{I_i}u\geq\inf_{I_i}g$, so for every $\eps>0$ there is $x^\star_\eps\in I_i$ such that
	$u(x^\star_\eps)>\inf_{I_i}g-\eps\eqqcolon m_\eps$.
	We split
	\[
		\TV\bigl(u;(x_0,x_n)\bigr)
		=
		\TV\bigl(u;(x_0,x_{i-1}]\bigr)
		+ \TV\bigl(u;[x_{i-1},x^\star_\eps]\bigr)
		+ \TV\bigl(u;[x^\star_\eps,x_i]\bigr)
		+ \TV\bigl(u;[x_i,x_n)\bigr)
	\]
	and similarly for $\tilde u$.
	Since $\tilde u$ coincides with $u$ in $(x_0,x_{i-1}]\cup[x_i,x_n)$, we have
	\begin{align*}
		\TV\bigl(u;(x_0,x_{i-1}]\bigr) & = \TV\bigl(\tilde u;(x_0,x_{i-1}]\bigr), &
		\TV\bigl(u;[x_i,x_n)\bigr)     & = \TV\bigl(\tilde u;[x_i,x_n)\bigr).
	\end{align*}
	Let us focus on the total variations of $u$ and $\tilde u$ over the interval $[x_{i-1},x^\star_\eps]$,
	since those over $[x^\star_\eps,x_i]$ are treated analogously.
	In all following cases we will be proving that
	\begin{equation}\label{eq:tv-u-tildeu-eps}
		\TV\bigl(\tilde u;[x_{i-1},x^\star_\eps]\bigr) \leq \TV\bigl(u;[x_{i-1},x^\star_\eps]\bigr)+\eps,
	\end{equation}
	so that passing to the limit $\eps\to0^+$ we finally get \eqref{eq:tv-u-tildeu}.
	\begin{enumerate}
		\item If $u(x_{i-1})\geq m$, then $\tilde u = u\vee m$ on the whole interval $[x_{i-1},x^\star_\eps]$,
		      hence we have immediately
		      $\TV\bigl(\tilde u;[x_{i-1},x^\star_\eps]\bigr) \leq \TV\bigl(u;[x_{i-1},x^\star_\eps]\bigr)$.

		\item Consider now $u(x_{i-1})<m$.
		      \begin{enumerate}
			      \item If $u^+(x_{i-1})\geq m_\eps$ then
			            $u^+(x_{i-1})\leq\tilde u^+(x_{i-1})\leq u^+(x_{i-1})+\eps$, hence
			            \eqref{eq:tv-u-tildeu-eps} follows as
			            \[
				            \begin{split}
					            \TV\bigl(u;[x_{i-1},x^\star_\eps]\bigr)
					            &= \abs{u^+(x_{i-1})-u(x_{i-1})} + \TV\bigl(u;(x_{i-1},x^\star_\eps]\bigr) \\
					            &\geq \abs{\tilde u^+(x_{i-1})-\tilde u(x_{i-1})} - \eps
					            + \TV\bigl(\tilde u;(x_{i-1},x^\star_\eps]\bigr) \\
					            &= \TV\bigl(\tilde u;[x_{i-1},x^\star_\eps]\bigr) - \eps.
				            \end{split}
			            \]
			            \item\label{it:2b} Otherwise, if $u^+(x_{i-1})<m_\eps$, define
			            \[
				            x'_\eps = \max\set*{x\in(x_{i-1},x^\star_\eps]}{u\leq m_\eps \text{ in } (x_{i-1},x)}.
			            \]
			            We have that $u^-(x'_\eps)\leq m_\eps$.
			            \begin{enumerate}
				            \item If $x'_\eps<x^\star_\eps$ then $u^+(x'_\eps)\geq m_\eps$,
				                  so we may assume without loss of generality that $u(x'_\eps)=m_\eps$;
				                  moreover $\tilde u=m$ in $(x_{i-1},x'_\eps]$.
				                  \[
					                  \begin{split}
						                  \TV\bigl(u;[x_{i-1},x^\star_\eps]\bigr)
						                  &= \TV\bigl(u;[x_{i-1},x'_\eps]\bigr)
						                  + \TV\bigl(u;[x'_\eps,x^\star_\eps]\bigr) \\
						                  &\geq \abs{u(x'_\eps)-u(x_{i-1})}
						                  + \TV\bigl(u;[x'_\eps,x^\star_\eps]\bigr)
					                  \end{split}
				                  \]
				                  and
				                  \[
					                  \begin{split}
						                  \TV\bigl(\tilde u;[x_{i-1},x^\star_\eps]\bigr)
						                  &= \TV\bigl(\tilde u;[x_{i-1},x'_\eps]\bigr)
						                  + \TV\bigl(\tilde u;[x'_\eps,x^\star_\eps]\bigr) \\
						                  &= \abs{m-u(x_{i-1})}
						                  + \TV\bigl(\tilde u;[x'_\eps,x^\star_\eps]\bigr)
					                  \end{split}
				                  \]
				                  Observing that $\abs{u(x'_\eps)-m}=\abs{m_\eps-m}=\eps$ and
				                  $\TV\bigl(u;[x'_\eps,x^\star_\eps]\bigr)
					                  \geq \TV\bigl(\tilde u;[x'_\eps,x^\star_\eps]\bigr)$,
				                  we deduce \eqref{eq:tv-u-tildeu-eps}.
				            \item If instead $x'_\eps=x^\star_\eps$,
				                  then $u\leq m_\eps$ and $\tilde u=m$ in $(x_{i-1},x^\star_\eps)$, hence
				                  \[
					                  \begin{split}
						                  \TV\bigl(u;[x_{i-1},x^\star_\eps]\bigr)
						                  \geq \abs{u(x^\star_\eps)-u(x_{i-1})}
					                  \end{split}
				                  \]
				                  and, since $u(x_{i-1})<m\leq\tilde u(x^\star_\eps)$,
				                  \[
					                  \TV\bigl(\tilde u;[x_{i-1},x^\star_\eps]\bigr)
					                  = \abs{m-u(x_{i-1})} + \abs{\tilde u(x^\star_\eps)-m}
					                  = \abs{\tilde u(x^\star_\eps)-u(x_{i-1})};
				                  \]
				                  noticing that $u(x^\star_\eps)\leq\tilde u(x^\star_\eps)\leq u(x^\star_\eps)+\eps$
				                  we conclude again that \eqref{eq:tv-u-tildeu-eps} holds.
			            \end{enumerate}
		      \end{enumerate}
	\end{enumerate}
	This concludes the proof of \eqref{eq:tv-u-tildeu-eps}, and hence that of \eqref{eq:tv-u-tildeu}.

	On the other hand, since $\inf_{I_i}u<\inf_{I_i}g$, there is a point $x^\star\in I_i$ such that
	$u(x^\star)<\inf_{I_i}g$. By regularity of $u$, there is an open interval (one endpoint of which
	is $x^\star$) where $u<\inf_{I_i}g$. This implies that $\norm{\tilde u-g}_p<\norm{u-g}_p$,
	which together with \eqref{eq:tv-u-tildeu} leads to $F(\tilde u)<F(u)$, which contradicts the
	minimality of $u$.
\end{proof}

\begin{theorem}\label{thm:min-piecewise-monotone}
	Let $x_0<\dots<x_n$ and let $g\in\BVreg\bigl((x_0,x_n)\bigr)$ be a function which is monotone
	in every interval $I_i=(x_{i-1},x_i)$ and with at most $\alpha$-growth at their extremes,
	i.e.\ there exist $\alpha,C>0$ such that
	\begin{align*}
		\abs{g(x)-g^+(x_{i-1})} & \leq C(x-x_{i-1})^\alpha    &
		\abs{g(x)-g^-(x_i)}     & \leq C(x_i-x)^\alpha        &
		                        & \forall x\in (x_{i-1},x_i).
	\end{align*}
	Given some points $\xi_i\in I_i$, let $I_i^-=(x_{i-1},\xi_i)$ and $I_i^+=(\xi_i,x_i)$.
	If the conditions of \autoref{prop:u_contained_in_g} hold for the intervals $I_i$, $I_i^-$ and $I_i^+$,
	and in addition the inequalities
	\begin{align*}
		\gamma  & \leq \left(1+\frac1{p\alpha}\right)^{-1} = \frac{p\alpha}{p\alpha+1}                            &
		        & \text{and}                                                                                      &
		\lambda & > 2^{2+1/\beta} K(p,\alpha)^{-\gamma/p} C^{\gamma/(p\alpha)} n^{1-\gamma/p} H^{1-\beta\gamma/p}
	\end{align*}
	are satisfied, then $g$ is essentially the unique minimizer of the functional
	\begin{gather*}
		F     : \BV\bigl((x_0,x_n)\bigr)\to[0,\infty)  \\
		F(u)  = \TV(u) + \lambda\norm{u-g}_p^\gamma,
	\end{gather*}
	in the sense that any other minimizer coincides with $g$ outside of the jump set of $g$.
\end{theorem}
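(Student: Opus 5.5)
The plan is to reduce to the monotone case, \autoref{thm:min-monotone}, applied on each subinterval $I_i$, using \autoref{prop:u_contained_in_g} to control how $u$ can sit relative to $g$ on $I_i$, $I_i^-$ and $I_i^+$. Let $u$ be a minimizer, taken regular, and truncated between $\inf g$ and $\sup g$ via \autoref{rmk:BV-truncation}. Since the hypotheses of \autoref{prop:u_contained_in_g} hold for the families $\{I_i\}$, $\{I_i^\pm\}$, we obtain
\[
\inf_J g\le\inf_J u\le\sup_J u\le\sup_J g \qquad \text{for } J\in\{I_i,I_i^-,I_i^+\}.
\]
Suppose $g$ is non-decreasing on $I_i$. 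Then on $I_i^-$ we have $u\le\sup_{I_i^-}g=g^-(\xi_i)$, and on $I_i^+$ we have $u\ge g^+(\xi_i)$. Set
\[
h_i=\inf_{I_i^-}u-g^+(x_{i-1})\ge0,\qquad k_i=g^-(x_i)-\sup_{I_i^+}u\ge0 .
\]
The growth hypothesis then gives, exactly as in \autoref{lem:fidelity lower bound},
\[
\norm{u-g}_{L^p(I_i)}^p\ge K(p,\alpha)C^{-1/\alpha}\bigl(h_i^\beta+k_i^\beta\bigr),\qquad \beta=p+1/\alpha .
\]
The splitting at $\xi_i$ guarantees that the two boundary layers are separate and that $u$ stays on the correct side of $g$ near each endpoint.

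Next I bound the total variation deficit. I would split
\[
\TV(u;(x_0,x_n))=\sum_i\TV(u;I_i)+\sum_i\bigl(\text{jump contributions at }x_i\bigr)
\]
and compare with the same decomposition for $g$. On $I_i$, $\TV(u;I_i)\ge \sup_{I_i}u-\inf_{I_i}u\ge \TV(g;I_i)-(h_i+k_i)$, because $\sup_{I_i^+}u$ and $\inf_{I_i^-}u$ are within $k_i$ and $h_i$ of $g$'s endpoint values. At each $x_i$, the triangle inequality shows that $u$'s one-sided values differ from $g^\pm(x_i)$ by at most $h$- or $k$-type quantities from the neighbouring intervals. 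Hence the jump term of $u$ is at least that of $g$ minus those quantities.

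This step is where I expect the main difficulty. The one-sided limits of $u$ at $x_i$ are not directly $\inf_{I_i^-}u$ or $\sup_{I_i^+}u$. Moreover, the jump between two monotone pieces may be in either direction, so the sign bookkeeping needs case analysis on the monotonicity directions of $I_i$ and $I_{i+1}$. I expect every quantity to be charged at most twice, which yields
\[
\TV(g)-\TV(u)\le 2\sum_i(h_i+k_i).
\]
This factor $2$, together with the $2^{1+1/\beta}$ arising from means below, should account for the constant $2^{2+1/\beta}$.

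To conclude, combine the two estimates with the power-mean inequalities over the $2n$ numbers $h_i,k_i\le H$:
\[
\sum_i(h_i+k_i)\le (2n)^{1-1/\beta}\Bigl(\sum_i h_i^\beta+k_i^\beta\Bigr)^{1/\beta},
\qquad
\Bigl(\sum h_i^\beta+k_i^\beta\Bigr)^{\gamma/p}\ge (2nH^\beta)^{\gamma/p-1/\beta}\Bigl(\sum h_i^\beta+k_i^\beta\Bigr)^{1/\beta},
\]
where the second uses $\gamma\le p/\beta$. The $n$-powers then combine into $n^{1-\gamma/p}$. This gives
\[
0\ge F(u)-F(g)\ge\Bigl[\lambda K^{\gamma/p}C^{-\gamma/(p\alpha)}H^{\beta\gamma/p-1}c_n-2(2n)^{1-1/\beta}\Bigr]\Bigl(\sum_i h_i^\beta+k_i^\beta\Bigr)^{1/\beta},
\]
and the assumption on $\lambda$ makes the bracket positive, so all $h_i=k_i=0$. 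Then $u$ attains $g$'s extreme values near every $x_i$, so $\TV(u)\ge\TV(g)$. Therefore $\lambda\norm{u-g}_p^\gamma\le0$, and $u=g$ a.e. By \autoref{prop:basic-properties}(iii) the two regular functions agree off $J_g$.
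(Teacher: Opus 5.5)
The gap is in the total variation comparison, and it is not a matter of sign bookkeeping. You split $\TV(u)$ at the junctions $x_i$ and use $\TV(u;I_i)\ge\sup_{I_i}u-\inf_{I_i}u$. You then assert that $u^\pm(x_i)$ lie within $h$- or $k$-type quantities of $g^\pm(x_i)$. That assertion is false. The quantity $k_i$ controls only $\sup_{I_i^+}u$, and for $g$ non-decreasing on $I_i$ the containment on $I_i^+$ merely places $u^-(x_i)$ somewhere in $[g^+(\xi_i),g^-(x_i)]$.

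Concretely, take $g(x)=x$ on $(0,1)$, $g(x)=x-1$ on $(1,2)$, $\xi_1=1/2$, $\xi_2=3/2$. Let $u=g$ on $(0,1/2]\cup(1,2)$, $u(x)=2x-1/2$ on $[1/2,3/4]$, and $u(x)=5/2-2x$ on $[3/4,1)$. Every containment from \autoref{prop:u_contained_in_g} holds and $h_1=k_1=h_2=k_2=0$. Yet $u^-(1)=1/2\neq1=g^-(1)$, and the jump of $u$ at $1$ is $1/2$ against $1$ for $g$. Your chain therefore gives only $\TV(u)\ge1+1/2+1=5/2<3=\TV(g)$, while in fact $\TV(u)=3$. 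The variation that compensates for the smaller jump, namely the descent of $u$ from $1$ to $1/2$ inside $I_1^+$, has already been discarded by the bound $\sup-\inf$. So no case analysis on monotonicity directions can recover $\TV(g)-\TV(u)\le2\sum_i(h_i+k_i)$ from this decomposition. The same defect breaks your last step, ``$h_i=k_i=0$ implies $\TV(u)\ge\TV(g)$''.

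The missing idea is not to cut at the $x_i$ at all. Instead, bound $\TV\bigl(u;(x_0,x_n)\bigr)$ from below along one ordered partition $x'_1<x''_1<x'_2<\dots<x'_n<x''_n$. Take $x'_i\in I_i^-$ and $x''_i\in I_i^+$ nearly attaining $\inf_{I_i^-}u$ and $\sup_{I_i^+}u$ (roles swapped where $g$ is non-increasing), so that $\abs{u(x'_i)-g^+(x_{i-1})}\le h_i+\eps$ and $\abs{u(x''_i)-g^-(x_i)}\le k_i+\eps$. The reverse triangle inequality then gives, with no sign distinctions:
\begin{itemize}
\item $\abs{u(x''_i)-u(x'_i)}\ge\abs{g^-(x_i)-g^+(x_{i-1})}-h_i-k_i-2\eps$;
\item $\abs{u(x'_{i+1})-u(x''_i)}\ge\abs{g^+(x_i)-g^-(x_i)}-k_i-h_{i+1}-2\eps$.
\end{itemize}
Each cross increment spans the junction and automatically accounts for whatever $u$ does near $x_i$. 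Since $g$ is regular and piecewise monotone, $\TV(g)$ is exactly the sum of the $\abs{g^-(x_i)-g^+(x_{i-1})}$ and of the jumps $\abs{g^+(x_i)-g^-(x_i)}$. Summing gives the deficit bound, with each $h_i,k_i$ charged twice. Guaranteeing this left-to-right order of the near-extremal points inside each $I_i$ is the real purpose of the splitting points $\xi_i$.

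The remaining parts of your plan are sound. This covers the fidelity bound obtained separately on $I_i^-$ and $I_i^+$, and the identification of $u$ and $g$ off $J_g$. It also covers the power-mean step, which only needs $\lambda>2^{2-\gamma/p}K(p,\alpha)^{-\gamma/p}C^{\gamma/(p\alpha)}n^{1-\gamma/p}H^{1-\beta\gamma/p}$ and is therefore implied by the stated hypothesis.
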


In the course of the proof it would appear natural to rearrange monotonically $u\rvert_{I_i}$
via \autoref{prop:increasing-minimizer}. Unfortunately, this transformation does not keep track of
jumps at the endpoints of these intervals, which may cause an increase in energy.
This obstruction is overcome by the additional requests on the intervals $I_i^\pm$
and a more involved ad hoc proof strategy.

\begin{proof}
	Let $I=(x_0,x_n)$, $H=\sup_Ig-\inf_Ig$ and $H_i=\sup_{I_i}g-\inf_{I_i}g$.
	By \autoref{prop:u_contained_in_g}, for every $i\in\{1,\dots,n\}$ we have
	\begin{align*}
		\inf_{I_i} g \leq \inf_{I_i} u         & \leq \sup_{I_i} u \leq \sup_{I_i} g,         &
		\inf_{I_i^\pm} g \leq \inf_{I_i^\pm} u & \leq \sup_{I_i^\pm} u \leq \sup_{I_i^\pm} g.
	\end{align*}

	Fix $i\in\{1,\dots,n\}$. Assume that $g$ is non-decreasing in $I_i$. Define
	\begin{align*}
		h_i & = \inf_{I_i}u-g^+(x_{i-1}) = \inf_{I_i}u-\inf_{I_i}g \geq 0, &
		k_i & = g^-(x_i)-\sup_{I_i}u = \sup_{I_i}g-\sup_{I_i}u \geq 0.
	\end{align*}
	Notice that $h_i+k_i\leq H_i\leq H$.
	Moreover
	\[
		\inf_{I_i^-} u \leq \sup_{I_i^-} g = g^-(\xi_i) \leq g^+(\xi_i) = \inf_{I_i^+} g \leq \inf_{I_i^+} u
	\]
	and
	\[
		u(\xi_i) \geq \min\{u^-(\xi_i),u^+(\xi_i)\} \geq \min\left\{\inf_{I_i^-}u, \inf_{I_i^+}u\right\}
		\geq \inf_{I_i^-}u
	\]
	therefore
	\[
		\inf_{I_i^-} u = \inf_{I_i} u
		\qquad\text{and}\qquad
		\sup_{I_i^+} u = \sup_{I_i} u.
	\]

	Given $\eps>0$, let $x^\pm_{i,\eps}\in I_i^\pm$ such that
	\begin{align*}
		u(x^-_{i,\eps}) & \leq \inf_{I_i}u + \eps  &
		                & \text{and}               &
		u(x^+_{i,\eps}) & \geq \sup_{I_i}u - \eps.
	\end{align*}

	For every $i\in\{1,\dots,n\}$ there exits $x_{i-1}<x'_{i,\eps}<x''_{i,\eps}<x_i$ such that
	\begin{align}\label{eq:x' x'' ineq}
		\abs{u(x'_{i,\eps})-g^+(x_{i-1})} & \leq h_i + \eps, &
		\abs{u(x''_{i,\eps})-g^-(x_i)}    & \leq k_i + \eps.
	\end{align}

	By the monotonicity of $g\rvert_{I_i}$ we have
	\begin{equation}\label{eq:TV g}
		\begin{split}
			\TV(g;[x_0,x_n])
			&= \sum_{i=1}^n \TV(g;(x_{i-1},x_i)) + \sum_{i=1}^{n-1} \abs{[g](x_i)} \\
			&= \sum_{i=1}^n \abs{g^-(x_i)-g^+(x_{i-1})} + \sum_{i=1}^{n-1} \abs{g^+(x_i)-x^-(x_i)}.
		\end{split}
	\end{equation}
	On the other hand $\TV(u;[x_0,x_n])$ can be estimated from below along the sorted list of points
	\[
		x'_{1,\eps}<x''_{1,\eps} < \dots < x'_{i,\eps}<x''_{i,\eps} < \dots < x'_{n,\eps}<x''_{n,\eps}
	\]
	and using \eqref{eq:x' x'' ineq} and \eqref{eq:TV g}, obtaining
	\[
		\begin{split}
			&\TV(u;[x_0,x_n]) \\
			&\geq \sum_{i=1}^n \abs{u(x''_{i,\eps})-u(x'_{i,\eps})}
			+ \sum_{i=1}^{n-1} \abs{u(x'_{i+1,\eps})-u(x''_{i,\eps})} \\
			&\overset{\eqref{eq:x' x'' ineq}}\geq \sum_{i=1}^n \bigl(\abs{g^-(x_i)-g^+(x_{i-1})}-h_i-k_i-2\eps\bigr)
			+ \sum_{i=1}^{n-1} \bigl(\abs{g^+(x_i)-x^-(x_i)}-h_{i+1}-k_i-2\eps\bigr) \\
			&\overset{\eqref{eq:TV g}}\geq \TV(g;[x_0,x_n]) - 2\sum_{i=1}^n (h_i+k_i) - (4n-2)\eps.
		\end{split}
	\]
	Letting $\eps\to0$ we deduce $\TV(u;[x_0,x_n])\geq\TV(g;[x_0,x_n])-2\sum_{i=1}^n(h_i+k_i)$.

	Applying \autoref{lem:fidelity lower bound} to every interval $I_i$ and summing from $1$ to $n$ we get
	\[
		\norm{u-g}_{L^p(I)}^p
		\geq \sum_{i=1}^n \norm{u-g}_{L^p(I_i)}^p
		\geq K(p,\alpha) C^{-1/\alpha} \sum_{i=1}^n (h_i^{p+1/\alpha}+k_i^{p+1/\alpha}).
	\]
	Let us call $\beta=p+1/\alpha>1$ the exponent appearing in the fidelity term estimate. Using for
	convenience the shorter notation $M_q(h;k)=M_q(h_1,k_1,\dots,h_n,k_n)$ to denote the $q$-mean of
	the $2n$ parameters (see \autoref{def:p-mean}), we have that $M_1(h;k) \leq M_\beta(h;k)$.
	Since $t\mapsto t^\beta$ is super-additive, we have also
	$h_i^\beta+k_i^\beta\leq(h_i+k_i)^\beta\leq H_i^\beta\leq H^\beta$, hence
	\[
		M_\beta(h;k)
		= \left(\frac1{2n} \sum_{i=1}^n (h_i^\beta+k_i^\beta)\right)^{1/\beta}
		\leq \left(\frac1{2n} \sum_{i=1}^n H_i^\beta\right)^{1/\beta}
		\leq \left(\frac1{2n} nH^\beta\right)^{1/\beta}
		= 2^{-1/\beta}H.
	\]
	The assumption $\gamma\leq p/\beta$ ensures that $\beta\gamma/p\leq1$, therefore
	\[
		\left(\frac{M_\beta(h;k)}{2^{-1/\beta}H}\right)^{\beta\gamma/p}
		\geq \frac{M_\beta(h;k)}{2^{-1/\beta}H}.
	\]
	This implies
	\[
		\begin{split}
			\norm{u-g}_{L^p(I)}^\gamma
			&\geq K(p,\alpha)^{\gamma/p} C^{-\gamma/(p\alpha)}
			\left(\sum_{i=1}^n (h_i^{p+1/\alpha}+k_i^{p+1/\alpha})\right)^{\gamma/p} \\
			&= K(p,\alpha)^{\gamma/p} C^{-\gamma/(p\alpha)}
			\left(2n M_\beta(h;k)^\beta\right)^{\gamma/p} \\
			&= K(p,\alpha)^{\gamma/p} C^{-\gamma/(p\alpha)}
			(2n)^{\gamma/p} M_\beta(h;k)^{\beta\gamma/p} \\
			&\geq K(p,\alpha)^{\gamma/p} C^{-\gamma/(p\alpha)} (2n)^{\gamma/p}
			2^{-\gamma/p+1/\beta} H^{\beta\gamma/p-1} M_\beta(h;k) \\
			&= 2^{-1/\beta} K(p,\alpha)^{\gamma/p} C^{-\gamma/(p\alpha)} n^{\gamma/p}
			H^{\beta\gamma/p-1} M_\beta(h;k).
		\end{split}
	\]

	Combining the resulting lower bounds for the total variation and the fidelity terms leads to
	\[
		\begin{split}
			0
			&\geq F(u) - F(g)
			= \TV(u;[x_0,x_n]) - \TV(g;[x_0,x_n]) + \lambda\norm{u-g}_{L^p([x_0,x_n])}^\gamma \\
			&\geq - 4n M_1(h;k)
			+ \lambda 2^{-1/\beta} K(p,\alpha)^{\gamma/p} C^{-\gamma/(p\alpha)} n^{\gamma/p}
			H^{\beta\gamma/p-1} M_\beta(h;k) \\
			&\geq \bigl[\lambda 2^{-1/\beta} K(p,\alpha)^{\gamma/p} C^{-\gamma/(p\alpha)}
				n^{\gamma/p} H^{\beta\gamma/p-1} - 4n\bigr] M_\beta(h;k).
		\end{split}
	\]
	If $\lambda$ is sufficiently large (precisely as stated in the assumptions of the theorem),
	the square bracket is positive, hence we must have
	\[
		h_1=k_1=\dots=h_n=k_n=0.
	\]

	This implies that $\TV(u;[x_0,x_n])\geq\TV(g;[x_0,x_n])$, therefore $F(u)\geq F(g)$ and $g$ is a
	minimizer.

	Let us show that $u=g$ on $I\setminus J_g$. If it is not the case, there is $x\in I\setminus J_g$
	such that $u(x)\neq g(x)$. Since $u$ is regular, we have either $u^-(x)\neq g(x)$ or $u^+(x)\neq g(x)$.
	Moreover, $x$ is a continuity point for $g$, hence there is a small interval of the form
	$(x-\eps,x)$ or $(x,x+\eps)$ where $u$ and $g$ differ, leading to $\norm{u-g}_{L^p(I)}>0$ and
	contradicting the optimality of $u$.
\end{proof}

\begin{remark}\label{rmk:sharpness}
	Notice that the assumption on $\gamma$ in \autoref{thm:not-min-alpha-growth}%
	\footnote{As a particular case, \autoref{thm:not-min-cantor-function} falls in this setting too, with
		$\alpha=\log2/\log3$.}
	is complementary to that in \autoref{thm:min-monotone} and \autoref{thm:min-piecewise-monotone},
	demonstrating the sharpness of our result.
\end{remark}

\section{Nonlinear \texorpdfstring{$\TVphitheta$}{TV\_phi\_theta}}\label{sec:nonlinear}

In this section, we tackle the problem with the nonlinear total variation $\TVphitheta$. As we will
see in the ensuing sections, this case is remarkably more difficult than the linear case. In
particular, the functional does not behave well with respect to local truncations. To overcome this
difficulty, we study a suitable boundary value problem in \autoref{sec:bvp}, characterize its
minimizers, and use them to construct competitors for the problem with the nonlinear total
variation. \autoref{sec:nonlinear-not-min} is the nonlinear counterpart to
\autoref{sec:linear-not-min}, whereas the results in \autoref{sec:linear-min} cannot be reproduced
in this setting: in \autoref{sec:nonlinear-min} we prove \autoref{prop:nonlinear-competitors} where
we show that different behaviors of $\phi$ at infinity can disprove both \autoref{thm:min-monotone}
and \autoref{thm:min-piecewise-monotone}. As a consequence, we propose to study the penalized
functional in \eqref{eq:penalized-functional}, for which numerical results illustrate the behavior
of minimizers.

\subsection{Boundary value problem}\label{sec:bvp}

We want to minimize the nonlinear total variation on an interval of a function with prescribed limits
at the endpoints, which can be formulated as the optimization problem
\begin{equation}\label{eq:E-def}
	E(L,H) = \inf\set*{\TVphitheta\bigl(u;(0,L)\bigr)}{u\in\BV\bigl((0,L)\bigr),\ u^+(0)=0,\ u^-(L)=H},
\end{equation}
where $L>0$ and $H\geq0$ are two prescribed constants.

\begin{theorem}\label{thm:bvp}
	The problem \eqref{eq:E-def} admits at least a minimizer $u\in\BVreg\bigl((0,L)\bigr)$ of the form%
	\footnote{$\Theta(\plchldr)$ is the Heaviside step function.}
	\[
		u(x) = ax + b\Theta(x-\bar x),
	\]
	for some $a,b\geq0$ and $\bar x\in(0,L)$, hence
	\begin{equation}\label{eq:E-min-ab}
		\begin{split}
			E(L,H)
			&= \min\set*{L\phi(a)+\theta(b)}{a,b\geq0,\ La+b=H} \\
			&= \min\set*{L\phi(a)+\theta(H-La)}{0\leq a\leq H/L} \\
			&= \min\set*{L\phi\bigl((H-b)/L\bigr)+\theta(b)}{0\leq b\leq H} \\
			&= \min\set*{\phi_L(H\alpha) + \theta\bigl(H(1-\alpha)\bigr)}{0\leq\alpha\leq1}.
		\end{split}
	\end{equation}
	In particular, there is at least one minimizer which has no Cantor part.
	If $\phi$ is strictly convex and $\theta$ is strictly concave then any minimizer is of this form.
\end{theorem}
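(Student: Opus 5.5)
The plan is to reduce an arbitrary competitor to one with only an absolutely continuous part and a single jump, without increasing $\TVphitheta$, and then minimize over the two remaining parameters. Let $u\in\BV\bigl((0,L)\bigr)$ with $u^+(0)=0$ and $u^-(L)=H$. Set
\[
	A=\int_0^L\abs{u'}\dx,\qquad C=\abs{\D^cu}\bigl((0,L)\bigr),\qquad B=\sum_{x\in J_u}\abs{[u](x)}.
\]
The boundary conditions give $A+B+C\geq H$.

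The reduction has three steps.
\begin{enumerate}
	\item For the absolutely continuous part, Jensen's inequality with $\phi$ convex gives $\int_0^L\phi(\abs{u'})\dx\geq L\phi(A/L)$. This is the perspective $\phi_L(A)$ of \autoref{sec:perspective}.
	\item For the jump part, $\theta$ is concave with $\theta(0)=0$, hence subadditive, so $\sum_{J_u}\theta(\abs{[u]})\geq\theta(B)$.
	\item For the Cantor part, I use $\theta(t)\leq t$, which follows from concavity and $\theta'(0)=1$. Then $C+\theta(B)\geq\theta(B)+\theta(C)\geq\theta(B+C)$, again by subadditivity.
\end{enumerate}
Together these give $\TVphitheta(u)\geq L\phi(A/L)+\theta(B+C)$.

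Next I bring the total height down to exactly $H$. Both $\phi$ and $\theta$ are non-decreasing, so I may decrease $A$ and $B'\coloneqq B+C$ until $A+B'=H$. This yields $a=A'/L$ and $b=B''$ with $La+b=H$ and $\TVphitheta(u)\geq L\phi(a)+\theta(b)$. Conversely, $u(x)=ax+b\Theta(x-\bar x)$ is admissible, lies in $\BVreg$, has no Cantor part, and attains exactly $L\phi(a)+\theta(b)$. Hence $E(L,H)$ equals the constrained infimum in \eqref{eq:E-min-ab}. That infimum is a minimum because $a\mapsto L\phi(a)+\theta(H-La)$ is continuous on the compact interval $[0,H/L]$. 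Convexity gives continuity of $\phi$ on $(0,\infty)$. At $0$ I use $\phi(0)=0$ together with monotonicity and convexity, which give $\phi(t)\leq t\phi(1)$ for $t\leq1$. Concavity gives continuity of $\theta$ on $(0,\infty)$, and $\theta(t)\leq t$ gives it at $0$. The remaining equalities in \eqref{eq:E-min-ab} are just the reparametrizations $b=H-La$ and $La=H\alpha$.

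The main obstacle is the uniqueness statement under strict convexity of $\phi$ and strict concavity of $\theta$: every minimizer must be of this form. I would track the equality cases of each inequality above.
\begin{enumerate}
	\item Equality in Jensen with $\phi$ strictly convex forces $\abs{u'}$ to be a.e.\ constant.
	\item Strict concavity with $\theta(0)=0$ gives strict subadditivity, $\theta(s)+\theta(t)>\theta(s+t)$ for $s,t>0$. So equality forces at most one jump.
	\item For the Cantor part: strict concavity with $\theta'(0)=1$ gives $\theta(t)<t$ for $t>0$. Equality in $C+\theta(B)\geq\theta(B+C)$ then forces $C=0$.
	\item The height-reduction step must be an equality, and I also need $u$ to be monotone. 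If $u'$ changed sign, or if the jump were opposite to $u'$, then $A+B>H$ strictly. Reducing the excess would strictly lower the energy unless $\phi$ or $\theta$ is locally constant there, and strict convexity or concavity together with monotonicity rules this out. In particular, strict convexity of $\phi$ with $\phi(0)=0$ and $\phi$ non-decreasing makes $\phi$ strictly increasing on $(0,\infty)$.
\end{enumerate}
The remaining difficulty is $u'$ of constant modulus but alternating sign. I would handle it by noting that any sign change yields $\abs{\int_0^Lu'\dx}<A$, so the same strict-excess argument excludes it. I would then verify that $u$ has the claimed form after choosing the regular representative.
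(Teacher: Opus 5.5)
Your proof is correct. It uses the same inequalities as the paper, but organizes them differently.

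\begin{itemize}
\item \textbf{Shared ingredients.} Both arguments use Jensen for the absolutely continuous part, subadditivity of $\theta$ for the jumps, $\theta(t)\le t$ to absorb the Cantor part, and monotonicity of $\phi,\theta$ to remove the excess $A+B+C-H$.
\item \textbf{The paper's route.} It starts from a minimizer and modifies it through a chain of explicit competitors. It first makes the minimizer non-decreasing by rescaling $\abs{\D u}$ by the factor $H/(A+C+J)$. It then moves the Cantor part either into the absolutely continuous part, via $\phi(\beta)-\phi(\alpha)\le\beta-\alpha$, or into the jump part. Finally it averages $u'$ and coalesces the jumps.
\item \textbf{Your route.} You prove a single scalar lower bound $\TVphitheta(u)\ge L\phi(A/L)+\theta(B+C)$, valid for every admissible $u$. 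You trim the heights only at the end and then exhibit $ax+b\Theta(x-\bar x)$ as an optimal competitor.
\end{itemize}

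What your organization gains is that existence of a minimizer falls out of the argument. This relies on your check that the one-variable minimum is attained, which uses continuity of $\phi$ and $\theta$ at $0$. The paper's proof instead begins with ``let $u$ be a minimizer'' and does not separately justify existence, although its constructions would apply to any admissible function.

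The equality analysis is equivalent in the two proofs. Your strict-excess argument, that $A+B+C>H$ can be strictly reduced, plays the role of the paper's observation that $H=A+C+J$ if and only if $u$ is non-decreasing.

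One small point to make explicit: with countably many jumps, $\sum_{J_u}\theta(\abs{[u]})\ge\theta(B)$ follows from finite subadditivity on partial sums plus the continuity of $\theta$, which you establish only later.
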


\begin{proof}
	Let $I=(0,L)$, let $u$ be a minimizer, let $Du=u'\leb^1+D^cu+[u]\haus^0\rvert_{J_u}$ be the
	decomposition of its derivative, and let
	\begin{align*}
		A & = \abs{\D^au}(I), &
		C & = \abs{\D^cu}(I), &
		J & = \abs{\D^ju}(I).
	\end{align*}

	Observe that $H = \D u(I) = \D^au(I)+\D^cu(I)+\D^ju(I) \leq A+C+J$, with equality if and only if
	all three measures are non-negative, i.e.\ $u$ is non-decreasing.

	\begin{itemize}
		\item \textit{Monotonicity}. The function $\tilde u$ such that
		      $\D\tilde u=\frac{H}{A+C+J}\left(\abs{\D^au}+\abs{\D^cu}+\abs{\D^ju}\right)$
		      is non-decreasing and satisfies
		      \[
			      \begin{split}
				      \TVphitheta(\tilde u)
				      &= \int_0^L \phi\oleft(\frac H{A+C+J}\abs{u'(x)}\right)\dx
				      + \frac H{A+C+J}C + \sum_{x\in J_u} \theta\oleft(\frac H{A+C+J}\abs{[u](x)}\right) \\
				      &\leq \int_0^L \phi\bigl(\abs{u'(x)}\bigr)\dx
				      + C + \sum_{x\in J_u} \theta\bigl(\abs{[u](x)}\bigr)
				      = \TVphitheta(u)
			      \end{split}
		      \]
		      because $\frac H{A+C+J}\leq1$ and $\phi$ and $\theta$ are non-decreasing.
		      This implies that there is always a non-decreasing minimizer.

		      Let us investigate the equality conditions.
		      If $C>0$ we must have $H=A+C+J$, i.e.\ $u$ non-decreasing.
		      If $A>0$ we must have $\phi\oleft(\frac H{A+C+J}\abs{u'(x)}\right) = \phi(\abs{u'})$,
		      which implies $H=A+C+J$, i.e.\ $u$ non-decreasing, if $\phi$ is strictly increasing.%
		      \footnote{Notice that $\phi$ strictly convex implies $\phi$ strictly increasing.}
		      If $J>0$ we must have $\theta\oleft(\frac H{A+C+J}\abs{[u](x)}\right) = \theta(\abs{[u](x)})$,
		      which implies $H=A+C+J$, i.e.\ $u$ non-decreasing, if $\theta$ is strictly increasing.%
		      \footnote{Notice that $\theta$ strictly concave implies $\theta$ strictly increasing.}

		      This means that a minimizer must be non-decreasing if $\phi$ and $\theta$ are strictly increasing.
		\item \textit{Transfer Cantor part to absolutely continuous part}. Assume that $u$ is non-decreasing.
		      Observe that $\phi(\beta)-\phi(\alpha)\leq\beta-\alpha$ for every $0\leq\alpha<\beta$;
		      moreover, if $\phi$ is strictly convex, then this inequality is strict for every $\alpha<\beta$.

		      \begin{itemize}
			      \item Assume $A>0$. Then the function $\tilde u$ such that
			            $\D\tilde u = \left(1+\frac CA\right)\D^au+\D^ju$ satisfies
			            \[
				            \begin{split}
					            \TVphitheta(\tilde u)
					            &= \int_0^L \phi\bigl(\tilde u'(x)\bigr) \dx
					            + \sum_{x\in J_u} \theta\bigl(\abs{[\tilde u](x)}\bigr) \\
					            &= \int_0^L \phi\oleft(\left(1+\frac CA\right)u'(x)\right) \dx
					            + \sum_{x\in J_u} \theta\bigl(\abs{[u](x)}\bigr) \\
					            &\leq \int_0^L \phi\bigl(u'(x)\bigr) \dx
					            + \int_0^L \frac CA u'(x) \dx
					            + \sum_{x\in J_u} \theta\bigl(\abs{[u](x)}\bigr) \\
					            &= \int_0^L \phi\bigl(u'(x)\bigr) \dx + C
					            + \sum_{x\in J_u} \theta\bigl(\abs{[u](x)}\bigr)
					            = \TVphitheta(u).
				            \end{split}
			            \]
			            In this case, if $\phi$ is strictly convex, equality holds if and only if $\frac CAu'(x)=0$ for
			            almost every $x$, hence $C=0$.

			      \item If $A=0$, then the function $\tilde u$ such that $\D\tilde u = \frac CL + \D^ju$ satisfies
			            \[
				            \begin{split}
					            \TVphitheta(\tilde u)
					            &= \int_0^L \phi\oleft(\frac CL\right) \dx
					            + \sum_{x\in J_u} \theta\bigl(\abs{[\tilde u](x)}\bigr) \\
					            &\leq \int_0^L \frac CL \dx
					            + \sum_{x\in J_u} \theta\bigl(\abs{[u](x)}\bigr) \\
					            &= C + \sum_{x\in J_u} \theta\bigl(\abs{[u](x)}\bigr)
					            = \TVphitheta(u).
				            \end{split}
			            \]
			            If $\phi\neq\Id$, then $\phi(t)<t$ for every $t>0$, therefore equality holds
			            if and only if $C/L=0$, hence $C=0$.
		      \end{itemize}
		\item \textit{Transfer Cantor part to jump part.} Assume that $u$ is non-decreasing.
		      Observe that the function $\theta$ is subadditive, i.e.\
		      $\theta(\alpha+\beta)\leq\theta(\alpha)+\theta(\beta)$ for every $\alpha,\beta\geq0$;
		      moreover, if $\theta$ is strictly concave, then this inequality is strict for every
		      $\alpha,\beta>0$.
		      \begin{itemize}
			      \item Assume $J>0$. Then the function $\tilde u$ such that
			            $\D\tilde u = \D^au+\left(1+\frac CJ\right)\D^ju$ satisfies
			            \[
				            \begin{split}
					            \TVphitheta(\tilde u)
					            &= \int_0^L \phi\bigl(\abs{u'(x)}\bigr)\dx
					            + \sum_{x\in J_u} \theta\oleft(\left(1+\frac{C}{J}\right)\abs{[u](x)}\right) \\
					            &\leq \int_0^L \phi\bigl(\abs{u'(x)}\bigr)\dx
					            + \sum_{x\in J_u} \theta\bigl(\abs{[u](x)}\bigr)
					            + \sum_{x\in J_u} \theta\oleft(\frac CJ\abs{[u](x)}\right) \\
					            &\leq \int_0^L \phi\bigl(\abs{u'(x)}\bigr)\dx
					            + \sum_{x\in J_u} \theta\bigl(\abs{[u](x)}\bigr)
					            + \sum_{x\in J_u} \frac CJ\abs{[u](x)} \\
					            &= \int_0^L \phi\bigl(\abs{u'(x)}\bigr)\dx
					            + \sum_{x\in J_u} \theta\bigl(\abs{[u](x)}\bigr)
					            + C
					            = \TVphitheta{(u)}.
				            \end{split}
			            \]
			            If $\theta(t)<t$ for every $t>0$, then equality holds if and only if
			            $\frac CJ\abs{[u](x)}=0$ for every $x\in J_u$, hence $C=0$.
			      \item If $J=0$, then the function $\tilde u$ such that $\D\tilde u = \D^au + C\delta_{\bar x}$, with $\bar x\in I$, satisfies
			            \[
				            \begin{split}
					            \TVphitheta(\tilde u)
					            &= \int_0^L \phi\bigl(\abs{u'(x)}\bigr)\dx + \theta(C)
					            \leq \int_0^L \phi\bigl(\abs{u'(x)}\bigr)\dx + C
					            = \TVphitheta(u).
				            \end{split}
			            \]
			            If $\theta(t)<t$ for every $t>0$, then equality holds if and only if $C=0$.
		      \end{itemize}

		\item \textit{Transfer Cantor part to jump part, additively (optional)}.
		      This step is presented as an alternative way to transfer the Cantor part to the jump part.
		      Let $\tilde u$ be a function such that $\D\tilde u=\D^a u+\D^ju+\rho+\sigma$
		      where $\rho,\sigma$ are atomic measures satisfying
		      $\rho(I)+\sigma(I)=\D^cu(I)$, $\abs{\rho}(I)+\abs{\sigma}(I)=\abs{\D^cu}(I)$,
		      $\supp(\rho)\subseteq J_u$, $\sigma\perp D^ju$, $S=\supp(\sigma)$.
		      Then
		      \[
			      \begin{split}
				      \TVphitheta(\tilde u)
				      &= \int_0^L \phi(\abs{u'(x)})\dx
				      + \sum_{x\in J_u} \theta\bigl(\abs{[u](x)+\rho(x)}\bigr)
				      + \sum_{x\in S} \theta\bigl(\abs{\sigma(x)}\bigr) \\
				      &\leq \int_0^L \phi(\abs{u'(x)})\dx
				      + \sum_{x\in J_u} \theta\bigl(\abs{[u](x)}\bigr)
				      + \sum_{x\in J_u} \theta\bigl(\abs{\rho(x)}\bigr)
				      + \sum_{x\in S} \theta\bigl(\abs{\sigma(x)}\bigr) \\
				      &\leq \int_0^L \phi(\abs{u'(x)})\dx
				      + \sum_{x\in J_u} \theta\bigl(\abs{[u](x)}\bigr)
				      + \sum_{x\in J_u} \abs{\rho(x)}
				      + \sum_{x\in S} \abs{\sigma(x)} \\
				      &\leq \int_0^L \phi(\abs{u'(x)})\dx
				      + \sum_{x\in J_u} \theta\bigl(\abs{[u](x)}\bigr)
				      + \abs{\D^cu}(I)
				      \leq \TVphitheta(u).
			      \end{split}
		      \]

		      This works in particular by taking either $\rho$ or $\sigma$ to be $\D^cu(I)\delta_{\bar x}$
		      and the other equal to zero.

		      Since this step is optional, we do not discuss here the equality conditions.
		\item \textit{Averaging the absolutely continuous part}. By Jensen inequality we have
		      \[
			      \phi\oleft(\abs*{\frac{\D^au(I)}L}\right)
			      = \phi\oleft(\abs*{\dashint_0^L u'(x)\dx}\right)
			      \leq \dashint_0^L \phi\bigl(\abs{u'(x)}\bigr)\dx
			      = \frac1L \int_0^L \phi\bigl(\abs{u'(x)}\bigr)\dx;
		      \]
		      moreover, if $\phi$ is strictly convex, then equality holds if and only if $u'$
		      is constantly equal to $\D^au(I)/L$.%
		      \footnote{Notice that the function $\phi\circ\abs{\plchldr}:\setR\to\setR$ is strictly
			      convex if and only if $\phi$ is strictly convex.}

		      As a consequence, the function $\tilde u$ such that
		      $\D\tilde u = \frac{\D^au(I)}L\leb^1 + \D^cu + \D^ju$
		      satisfies
		      \[
			      \begin{split}
				      \TVphitheta(\tilde u)
				      &= \int_0^L \phi\oleft(\abs*{\frac{\D^au(I)}L}\right) \dx
				      + C + \sum_{x\in J_u} \theta\bigl(\abs{[u](x)}\bigr) \\
				      &\leq \int_0^L \phi\bigl(\abs{u'(x)}\bigr)\dx
				      + C + \sum_{x\in J_u} \theta\bigl(\abs{[u](x)}\bigr)
				      = \TVphitheta(u),
			      \end{split}
		      \]
		      with equality if and only if $u'$ is constantly equal to $\D^au(I)/L$,
		      under the strict convexity assumption of $\phi$.
		\item \textit{Coalescing jumps into a single one}.
		      The function $\tilde u$ such that $\D\tilde u = \D^au + \D^cu + \D^ju(I)\delta_{\bar x}$ for some
		      $\bar x\in I$ satisfies
		      \[
			      \begin{split}
				      \TVphitheta(\tilde u)
				      &= \int_0^L \phi\bigl(\abs{u'(x)}\bigr)\dx + \abs{\D^cu}(I)
				      + \theta\bigl(\abs{D^ju(I)}\bigr) \\
				      &= \int_0^L \phi\bigl(\abs{u'(x)}\bigr)\dx + \abs{\D^cu}(I)
				      + \theta\left(\abs*{\sum_{x\in J_u}[u](x)}\right) \\
				      &\leq \int_0^L \phi\bigl(\abs{u'(x)}\bigr)\dx + \abs{\D^cu}(I)
				      + \theta\left(\sum_{x\in J_u}\abs{[u](x)}\right) \\
				      &\leq \int_0^L \phi\bigl(\abs{u'(x)}\bigr)\dx + \abs{\D^cu}(I)
				      + \sum_{x\in J_u} \theta\bigl(\abs{[u](x)}\bigr)
				      = \TVphitheta(u).
			      \end{split}
		      \]
		      Let us investigate the equality conditions.
		      In the first inequality, equality holds if and only if $\abs{\D^ju(I)}=\abs{\D^ju}(I)$
		      if $\theta$ is strictly increasing;
		      in the second inequality, equality holds if and only if $u$ has at most one jump
		      if $\theta$ is strictly concave.
	\end{itemize}

	If $\phi$ is strictly convex and $\theta$ is strictly concave, then a minimizer $u$ must necessarily be
	non-decreasing, without any Cantor part ($\D^cu=0$), with $u'$ constant and at most one jump.
	Such functions can be written as $u(x)=ax+b\Theta(x-\bar x)$, where $\Theta$ is the Heaviside step function,
	with $\bar x\in(0,L)$ and $a,b\in[0,\infty)$, in which case the functional to minimize is
	$\TVphitheta(u)=L\phi(a)+\theta(b)$.

	The admissibility condition for the boundary value problem is $aL+b=H$, so we need to find
	$a\in[0,H/L]$ that minimizes $L\phi(a)+\theta(H-La)$, or equivalently $b\in[0,H]$ that minimizes
	$L\phi\bigl((H-b)/L\bigr)+\theta(b)$.
	The last formula in \eqref{eq:E-min-ab} follows letting $\alpha=\frac LHa$: then $a\in[0,H/L]$
	if and only if $\alpha\in[0,1]$ and we have
	\[
		L\phi(a) + \theta(H-La)
		= L\phi\left(\frac HL\alpha\right) + \theta\bigl(H(1-\alpha)\bigr)
		= \phi_L(H\alpha) + \theta\bigl(H(1-\alpha)\bigr). \qedhere
	\]
\end{proof}

\begin{remark}
	If $\phi$ and $\theta$ are differentiable, then a pair $(a,b)$ optimal for \eqref{eq:E-min-ab}
	and such that $a,b>0$ must necessarily satisfy $\phi'(a)=\theta'(b)$.
	Notice that in general there can be a finite, countable or uncountable number of minimizers of
	\eqref{eq:E-min-ab} and pairs satisfying the previous condition on the derivative.
\end{remark}

\begin{remark}
	If we extend $\phi,\theta$ to $\bar\phi,\bar\theta:\setR\to[0,\infty]$ by setting
	$\bar\phi(t)=\bar\theta(t)=+\infty$ for $t<0$,%
	\footnote{Notice that $\bar\phi$ is still convex, whereas $\bar\theta$ is neither convex
		nor concave globally.}
	then \eqref{eq:E-min-ab} can be rewritten as
	\[
		\begin{split}
			E(L,H)
			&= \min_{0\leq b\leq H} \left(L\phi\oleft(\frac{H-b}L\right)+\theta(b)\right)
			= \min_{b\geq0} \left(L\bar\phi\oleft(\frac{H-b}L\right)+\theta(b)\right) \\
			&= \min_{b\in\setR} \left(L\bar\phi\oleft(\frac{H-b}L\right)+\bar\theta(b)\right)
			= \min_{b\in\setR} \bigl(\bar\phi_L(H-b)+\bar\theta(b)\bigr)
			= (\bar\phi_L\square\bar\theta)(H),
		\end{split}
	\]
	where $\bar\phi_L(t)=L\bar\phi(t/L)$ is the perspective of $\bar\phi$ and $\square$ denotes the $\inf$-convolution.
\end{remark}

In the following propositions we study several properties of $E(L,H)$:
monotonicity and subadditivity (\autoref{prop:E-monotone}), upper bounds (\autoref{prop:E-upper-bounds}),
lower bounds (\autoref{prop:E-lower-bounds}), and continuous extension (\autoref{prop:E-continuous}).

\begin{proposition}\label{prop:E-monotone}
	The following properties are true.
	\begin{enumerate}
		\item\label{it:E-monotone} $E(L,H)$ is non-increasing in $L$ and non-decreasing in $H$.
		\item $E$ is jointly subadditive: $E(L_1+L_2,H_1+H_2) \leq E(L_1,H_1)+E(L_2,H_2)$.
	\end{enumerate}
\end{proposition}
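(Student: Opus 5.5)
I will work directly with the explicit formula \eqref{eq:E-min-ab} from \autoref{thm:bvp},
\[
	E(L,H)=\min\set*{L\phi(a)+\theta(b)}{a,b\geq0,\ La+b=H},
\]
rather than with the variational definition \eqref{eq:E-def}. Both claims will follow by exhibiting suitable admissible pairs $(a,b)$. Alternatively, one can argue with admissible functions, and I will keep that as a cross-check.

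\emph{Monotonicity in $L$.} Let $L_1\leq L_2$ and let $(a,b)$ be optimal for $E(L_1,H)$. Set $a'=L_1a/L_2\leq a$ and keep $b$. Then $L_2a'+b=H$, so the pair is admissible for $E(L_2,H)$. By the perspective monotonicity noted in \autoref{sec:perspective} (with $\phi(0)=0$, $s\mapsto s\phi(t/s)$ is non-increasing), we get
\[
	L_2\phi(L_1a/L_2)\leq L_1\phi(a).
\]
Hence $E(L_2,H)\leq E(L_1,H)$.

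\emph{Monotonicity in $H$.} Let $H_1\leq H_2$ and let $(a,b)$ be optimal for $E(L,H_2)$. I shrink it to an admissible pair for $H_1$ by scaling both components by $H_1/H_2\leq1$. Since $\phi$ and $\theta$ are non-decreasing, the cost does not increase, so $E(L,H_1)\leq E(L,H_2)$. If $H_2=0$ the claim is trivial.

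\emph{Subadditivity.} Take optimal pairs $(a_i,b_i)$ for $E(L_i,H_i)$, $i=1,2$. I use the competitor with slope
\[
	a=\frac{L_1a_1+L_2a_2}{L_1+L_2}
\]
and a single jump $b=b_1+b_2$. Admissibility holds because $(L_1+L_2)a+b=H_1+H_2$. For the two terms:
\begin{itemize}
	\item Jensen's inequality gives $(L_1+L_2)\phi(a)\leq L_1\phi(a_1)+L_2\phi(a_2)$; equivalently, this is convexity and $1$-homogeneity of the perspective.
	\item Subadditivity of the concave function $\theta$ with $\theta(0)=0$ gives $\theta(b_1+b_2)\leq\theta(b_1)+\theta(b_2)$; this was already used in the proof of \autoref{thm:bvp}.
\end{itemize}
Summing yields the claim.

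Equivalently, in function form, concatenate the two minimizers on $(0,L_1)$ and on $(L_1,L_1+L_2)$, the second shifted up by $H_1$. The $\TVphitheta$ energies add because no jump is created at $L_1$, thanks to the trace conditions $u_1^-(L_1)=H_1$ and $u_2^+(0)=0$. This gives the inequality directly from \eqref{eq:E-def}.

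The only delicate point is the monotonicity in $L$: the direction of the perspective monotonicity must be right, and if one argues with functions instead, a stretched competitor $u(L_1x/L_2)$ has derivative rescaled in exactly this perspective way. I expect no real obstacle beyond checking these directions and the degenerate cases $H=0$, $a=0$.
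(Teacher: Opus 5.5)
Your proof is correct. The monotonicity part coincides with the paper's. Fixing $\alpha$ in the last line of \eqref{eq:E-min-ab}, as the paper does, amounts to keeping $La$ and $b$ fixed when $L$ varies, and to rescaling $(a,b)$ by $H_1/H_2$ when $H$ varies. Appendix \autoref{sec:E-monotone-alt-proofs} contains exactly your two competitors.

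For subadditivity your primary route differs from the paper's. The paper glues the two optimal profiles of \eqref{eq:E-def}, which is what you record as a cross-check. You instead stay in the reduced formula and combine Jensen's inequality for $\phi$ with subadditivity of $\theta$.

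Your competitor is what one obtains by applying, to the glued function, the two reduction steps from the proof of \autoref{thm:bvp}: averaging the absolutely continuous part and coalescing the two jumps into one. So your argument is in effect ``glue, then reduce''.

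The paper's version is more economical and more robust. It uses only that $\TVphitheta$ is additive over adjacent intervals and that no jump is created at $L_1$. It therefore needs neither the convexity of $\phi$ nor the concavity of $\theta$, and not even \autoref{thm:bvp} if one works with near-minimizers instead of minimizers.

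Your version has two advantages. It reduces everything to two elementary scalar inequalities. It also exhibits an explicit competitor of the reduced form $ax+b\Theta(x-\bar x)$ for $E(L_1+L_2,H_1+H_2)$.
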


\begin{proof}
	We prove the two properties separately.
	\begin{enumerate}
		\item In the last representation formula in \eqref{eq:E-min-ab}, observe that for every
		      $\alpha\in[0,1]$ the function $\phi_L(H\alpha)+\theta\bigl(H(1-\alpha)\bigr)$ to be minimized
		      enjoys the claimed monotonicity (the monotonicity in $L$ follows from the monotonicity of
		      the perspective $\phi_L$ mentioned in \autoref{sec:perspective}).

		      In \autoref{sec:E-monotone-alt-proofs} we provide two alternative proofs of this property.

		\item Let $u_1\in\BV((0,L_1))$ and $u_2\in\BV((0,L_2))$ be optimal for $E(L_1,H_1)$ and $E(L_2,H_2)$ respectively. Define $u\in\BV\bigl((0,L_1+L_2)\bigr)$ by
		      \[
			      u(x) = \begin{cases}
				      u_1(x)           & x\in(0,L_1),       \\
				      H_1              & x=L_1,             \\
				      H_1 + u_2(x-L_1) & x\in(L_1,L_1+L_2).
			      \end{cases}
		      \]
		      Observe that $u$ is continuous in $L_1$ and has the correct limits at $0$ and $L_1+L_2$
		      to be admissible for $E(L_1+L_2,H_1+H_2)$, therefore
		      \[
			      \begin{split}
				      E(L_1+L_2,H_1+H_2)
				      &\leq \TVphitheta\bigl(u,(0,L_1+L_2)\bigr) \\
				      &= \TVphitheta\bigl(u,(0,L_1)\bigr)
				      + \TVphitheta\bigl(u,(L_1,L_1+L_2)\bigr) \\
				      &= \TVphitheta\bigl(u_1,(0,L_1)\bigr)
				      + \TVphitheta\bigl(u_2,(0,L_2)\bigr) \\
				      &= E(L_1,H_1) + E(L_2,H_2). \qedhere
			      \end{split}
		      \]
	\end{enumerate}
\end{proof}

\begin{proposition}\label{prop:E-upper-bounds}
	We have the upper bounds
	\begin{align}\label{eq:E-upper-bounds}
		E(L,H) & \leq \phi_L(H) = L\phi(H/L), &
		E(L,H) & \leq \theta(H).
	\end{align}
\end{proposition}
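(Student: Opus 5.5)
The plan is to read off both bounds directly from the representation formula \eqref{eq:E-min-ab} of \autoref{thm:bvp}, by evaluating the minimized quantity at the two extreme values of the parameter. Using the last line of \eqref{eq:E-min-ab},
\[
	E(L,H) = \min\set*{\phi_L(H\alpha) + \theta\bigl(H(1-\alpha)\bigr)}{0\leq\alpha\leq1},
\]
I will simply test the minimum at $\alpha=1$ and at $\alpha=0$.

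For $\alpha=1$ the objective equals $\phi_L(H)+\theta(0)=L\phi(H/L)$, since $\theta(0)=0$ by \autoref{def:TV_phi_theta}; this yields the first inequality in \eqref{eq:E-upper-bounds}. For $\alpha=0$ it equals $\phi_L(0)+\theta(H)=L\phi(0)+\theta(H)=\theta(H)$, since $\phi(0)=0$; this yields the second inequality.

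Equivalently, and without relying on the minimization formula, one can exhibit explicit competitors in \eqref{eq:E-def}. The affine function $u(x)=Hx/L$ is admissible, has no Cantor or jump part, and gives $\TVphitheta(u;(0,L))=\int_0^L\phi(H/L)\dx=L\phi(H/L)$. The step function $u=H\,\Theta(\plchldr-\bar x)$ with $\bar x\in(0,L)$ is also admissible, has a single jump of size $H$, and gives $\TVphitheta(u;(0,L))=\theta(H)$.

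There is no real obstacle. The only points to check are that these competitors satisfy the boundary conditions $u^+(0)=0$ and $u^-(L)=H$ (they do), and that $\phi(0)=\theta(0)=0$, which is assumed in \autoref{def:TV_phi_theta}.
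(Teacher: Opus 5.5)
Your proposal is correct and is essentially the paper's proof: testing $\alpha=1$ and $\alpha=0$ in the last line of \eqref{eq:E-min-ab} is the same as the paper's choice $(a,b)=(H/L,0)$ and $(a,b)=(0,H)$. Your alternative with explicit competitors in \eqref{eq:E-def} uses exactly these two functions, and it has the minor advantage of needing only the definition of $E$ as an infimum rather than the representation formula of \autoref{thm:bvp}.
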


\begin{proof}
	These upper bounds follow directly from \eqref{eq:E-min-ab}, taking $a=H/L,b=0$ for the former and
	$a=0,b=H$ for the latter.
\end{proof}

\begin{proposition}\label{prop:E-lower-bounds}
	We have the lower bounds
	\begin{equation}\label{eq:E-lower-bound-simple}
		E(L,H) \geq \phi_L(H) + \theta(H) - H,
	\end{equation}
	and
	\begin{equation}\label{eq:E-lower-bound-complex}
		\begin{split}
			E(L,H)
			&\geq \phi_L(H) - \bigl(\phi_L'^-(H)H-\theta(H)\bigr)_+ \\
			&= \begin{cases}
				\phi_L(H)                         & \phi_L'^-(H) \leq \theta(H)/H, \\
				\phi_L(H)-\phi_L'^-(H)H+\theta(H) & \phi_L'^-(H) \geq \theta(H)/H,
			\end{cases}
		\end{split}
	\end{equation}
	where $\phi_L'^-(H)=\min\partial\phi_L(H)$ is the smallest element of the subdifferential.\footnote{
		The estimate is available for every element of the subdifferential, but taking the smallest
		leads to the best one.}
\end{proposition}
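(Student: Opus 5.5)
The natural starting point is the explicit representation \eqref{eq:E-min-ab} from \autoref{thm:bvp}. It gives
\[
	E(L,H) = \min_{0\leq b\leq H} \Bigl(\phi_L(H-b) + \theta(b)\Bigr),
\]
since $L\phi\bigl((H-b)/L\bigr)=\phi_L(H-b)$. Both lower bounds should then follow by bounding the function of $b$ from below uniformly on $[0,H]$. This uses only convexity of the perspective $t\mapsto\phi_L(t)$ (see \autoref{sec:perspective}) and the concavity of $\theta$ with $\theta(0)=0$.

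For \eqref{eq:E-lower-bound-simple}, I will bound the two summands separately.

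First, $\phi_L$ is convex with asymptotic slope $\phi'(\infty)=1$. Hence its difference quotients are bounded by $1$, so $\phi_L(H)-\phi_L(H-b)\leq b$, that is, $\phi_L(H-b)\geq\phi_L(H)-b$.

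Second, $\theta$ is concave and non-decreasing with $\theta'(0)=1$, so it is $1$-Lipschitz. This gives $\theta(b)\geq\theta(H)-(H-b)$.

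Adding the two estimates yields
\[
	\phi_L(H-b)+\theta(b)\geq\phi_L(H)+\theta(H)-H
\]
for every $b\in[0,H]$, and taking the minimum gives the claim.

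For \eqref{eq:E-lower-bound-complex}, I will use a sharper estimate on each summand.

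For the $\phi$ term I use the subgradient inequality at $H$ with $s=\phi_L'^-(H)\in\partial\phi_L(H)$, which gives $\phi_L(H-b)\geq\phi_L(H)-sb$.

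For the $\theta$ term I use concavity together with $\theta(0)=0$. This gives the chord bound $\theta(b)\geq\frac bH\theta(H)$ for $b\in[0,H]$.

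Combining the two,
\[
	\phi_L(H-b)+\theta(b)\geq\phi_L(H)-b\Bigl(s-\frac{\theta(H)}H\Bigr).
\]
The right-hand side is affine in $b$, so its minimum over $[0,H]$ is attained at an endpoint. At $b=0$ it equals $\phi_L(H)$; at $b=H$ it equals $\phi_L(H)-sH+\theta(H)$. The minimum of these two values is exactly $\phi_L(H)-(sH-\theta(H))_+$, which produces the two cases stated.

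The only point needing care is the choice of the smallest subgradient. For $b\geq0$ we move leftwards from $H$, and the inequality $\phi_L(H-b)\geq\phi_L(H)-sb$ holds for any $s\in\partial\phi_L(H)$. Smaller $s$ gives a better bound, which justifies taking $s=\min\partial\phi_L(H)$, the left derivative.

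Edge cases are trivial. If $H=0$, both sides vanish. If $b=H$, the value $\phi_L(0)=0$ is consistent with both estimates.

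I expect no real obstacle. The main point of care is checking that the slope bound on $\phi_L$ follows from $\lim\phi(t)/t=1$ via monotone difference quotients; this is true because the derivative of $\phi_L$ at $t$ equals $\phi'(t/L)\leq1$.
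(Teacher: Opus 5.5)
Your proposal is correct and follows essentially the paper's proof. The paper obtains the first bound by noting that the complementary functions $t\mapsto t-\phi_L(t)$ and $t\mapsto t-\theta(t)$ are non-decreasing, which is exactly your pair of slope-at-most-one estimates; it obtains the second bound by the same subgradient inequality at $H$, the chord bound $\theta(t)\geq\frac{\theta(H)}{H}t$, and minimization of the resulting affine function at an endpoint.
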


\begin{proof}
	Let us prove the first lower bound.
	Define the complementary functions $\psi,\rho:[0,\infty)\to[0,\infty)$ by $\psi(t)=t-\phi(t)$ and
	$\rho(t)=t-\theta(t)$.
	Observe that $\psi(0)=\rho(0)=0$, $\psi$ is concave, increasing and sublinear at $+\infty$,%
	\footnote{In the sense that $\lim_{t\to\infty} \psi(t)/t=0$.}
	whereas $\rho$ is convex and $\rho'(0)=0$, hence increasing too.
	If we define also $\psi_L(t)=L\psi(t/L)$, then we have $\psi_L(t)=t-\phi_L(t)$.
	Exploiting the monotonicity of $\psi_L$ and $\rho$, for every $\alpha\in[0,1]$ we have
	\[
		\begin{split}
			\phi_L(H\alpha) + \theta\bigl(H(1-\alpha)\bigr)
			&= H\alpha - \psi_L(H\alpha) + H(1-\alpha) - \rho\bigl(H(1-\alpha)\bigr) \\
			&= H - \psi_L(H\alpha) - \rho\bigl(H(1-\alpha)\bigr) \\
			&\geq H - \psi_L(H) - \rho(H)
			= \phi_L(H) + \theta(H) - H.
		\end{split}
	\]

	Let us now prove the second lower bound.
	Since $\phi_L$ is convex and $\theta$ is concave, for every $t\in[0,H]$ we have
	$\phi_L(t) \geq \phi_L(H) + \phi_L'^-(H)(t-H)$ and $\theta(t) \geq \theta(H)/H t$, therefore
	\[
		\begin{split}
			\phi_L(H\alpha) + \theta\bigl(H(1-\alpha)\bigr)
			&\geq \phi_L(H) + \phi_L'^-(H)(H\alpha-H) + \theta(H)(1-\alpha) \\
			&\geq \phi_L(H) - \bigl(\phi_L'^-(H)H - \theta(H)\bigr)(1-\alpha).
		\end{split}
	\]
	If $\phi_L'^-(H)H\leq\theta(H)$, the minimum is achieved for $1-\alpha=0$ and the lower bound is
	just $\phi_L(H)$; otherwise it is achieved for $1-\alpha=1$, and the lower bound is
	$\phi_L(H) - \phi_L'^-(H)H + \theta(H)(1-\alpha)$.
\end{proof}

\begin{remark}\label{rmk:E-lower-bound}
	If $\phi'(0)=1$, then necessarily $\phi(t)=t$ for every $t\in[0,\infty)$.
	In \eqref{eq:E-lower-bound-complex} we always have $\theta(H)/H\leq1=\phi_L'(H)$, therefore
	$E(L,H) \geq \theta(H)$. Combined with \eqref{eq:E-upper-bounds} we get $E(L,H)=\theta(H)$.

	If on the other hand $\phi'(0)<1$, since $\lim_{H\to0^+} \theta(H)/H=1$ we have that for $H$
	sufficiently small $\theta(H)/H\geq\phi_L'^-(H)$, hence from \eqref{eq:E-lower-bound-complex} we
	get $E(L,H)\geq\phi_L(H)$. Combined with \eqref{eq:E-upper-bounds} we get $E(L,H)=\phi_L(H)$ for
	$H$ sufficiently small.

	More in general, one can rewrite the right hand side of the lower bound \eqref{eq:E-lower-bound-simple} as
	$\phi_L(H)-\rho(H)=\theta(H)-\psi_L(H)$. Combined with the upper bounds \eqref{eq:E-upper-bounds},
	this leads to
	\begin{align*}
		\phi_L(H) - \rho(H)   & \leq E(L,H) \leq \phi_L(H), &
		\theta(H) - \psi_L(H) & \leq E(L,H) \leq \theta(H).
	\end{align*}
	Since $\rho(H)=o(H)$ for $H\to0$, the former estimate is more useful for $H$ small, whereas the
	latter is more useful for large values of $H$, especially when $\theta$ is sublinear.
\end{remark}

\begin{proposition}\label{prop:E-continuous}
	$E(L,H)$ is continuous in $(0,\infty)\times[0,\infty)$ and it admits a continuous extension to
	$[0,\infty)\times[0,\infty)$, with $E(0,H)=\theta(H)$.
\end{proposition}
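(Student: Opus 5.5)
The plan is to work entirely with the last representation in \eqref{eq:E-min-ab},
\[
	E(L,H) = \min_{\alpha\in[0,1]} f(L,H,\alpha),
	\qquad
	f(L,H,\alpha) \coloneqq \phi_L(H\alpha) + \theta\bigl(H(1-\alpha)\bigr),
\]
and to deduce continuity of $E$ from joint continuity of $f$ via the standard fact that the minimum over a compact parameter set of a jointly continuous function is continuous. Concretely, $\phi$ is convex and finite on $[0,\infty)$, hence continuous on $(0,\infty)$. It is also continuous at $0$, because it is non-decreasing with $\phi(0)=0$ and $\phi(t)\leq t$; indeed the difference quotient $\phi(t)/t$ is non-decreasing with limit $1$. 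The perspective $(t,L)\mapsto L\phi(t/L)$ is therefore continuous on $[0,\infty)\times(0,\infty)$. The function $\theta$ is concave on $[0,\infty)$, hence continuous on $(0,\infty)$. At $0$ it is continuous since $\lim_{t\to0^+}\theta(t)/t=1$ forces $\theta(t)\to0=\theta(0)$. Thus $f$ is continuous on $(0,\infty)\times[0,\infty)\times[0,1]$. For a sequence $(L_n,H_n)\to(L,H)$ I will argue as follows. The upper semicontinuity $\limsup E(L_n,H_n)\leq E(L,H)$ comes from evaluating at a fixed optimal $\alpha$. The lower semicontinuity comes from taking optimal $\alpha_n$, extracting a convergent subsequence by compactness of $[0,1]$, and passing to the limit in $f$.

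For the extension to $L=0$, the key input is \autoref{prop:perspective}. Since $\phi'(\infty)=\lim_{t\to\infty}\phi(t)/t=1$ is finite, the perspective extends continuously (and convexly) to $[0,\infty)\times[0,\infty)$ with $\phi(t,0)=\phi'(\infty)t=t$. Setting $f(0,H,\alpha)=H\alpha+\theta\bigl(H(1-\alpha)\bigr)$, the function $f$ becomes jointly continuous on the compact-in-$\alpha$ domain $[0,\infty)\times[0,\infty)\times[0,1]$. The same argument as above then shows that $\tilde E(L,H)\coloneqq\min_\alpha f(L,H,\alpha)$ is continuous on $[0,\infty)^2$ and coincides with $E$ for $L>0$.

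It remains to identify $\tilde E(0,H)=\min_{\alpha\in[0,1]}\bigl(H\alpha+\theta(H(1-\alpha))\bigr)$ with $\theta(H)$. The value $\alpha=0$ gives $\theta(H)$. Conversely, subadditivity of $\theta$ (from concavity and $\theta(0)=0$) together with $\theta(s)\leq s$ gives $\theta(H)\leq\theta(H\alpha)+\theta(H(1-\alpha))\leq H\alpha+\theta(H(1-\alpha))$. Here $\theta(s)\leq s$ holds because $\theta(s)/s$ is non-increasing with limit $\theta'(0)=1$ at $0$. Hence the minimum equals $\theta(H)$.

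The only mildly delicate point is the continuity of the perspective at $L=0$, uniformly near points $(t_0,0)$. This is exactly what \autoref{prop:perspective} provides, so I expect no real obstacle. I will only double check that the continuity of $\phi$ and $\theta$ at $0$ is justified, as argued above, since \autoref{def:TV_phi_theta} does not state it explicitly.
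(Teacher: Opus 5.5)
Your proposal is correct and follows essentially the same route as the paper. You get joint continuity of $(L,H,\alpha)\mapsto\phi_L(H\alpha)+\theta\bigl(H(1-\alpha)\bigr)$ on $[0,\infty)^2\times[0,1]$ via \autoref{prop:perspective}, take the minimum over the compact set $[0,1]$, and evaluate at $L=0$. The differences are minor: to show $\min_{\alpha}\bigl[H\alpha+\theta\bigl(H(1-\alpha)\bigr)\bigr]=\theta(H)$ the paper uses that $\rho(t)=t-\theta(t)$ is non-decreasing, whereas you use subadditivity of $\theta$ together with $\theta(s)\le s$; both work, and your explicit check that $\phi$ and $\theta$ are continuous at $0$ covers a point the paper leaves implicit.
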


\begin{proof}
	Thanks to \autoref{prop:perspective}, the function
	\[
		g(L,H,\alpha) = \phi_L(H\alpha) + \theta\bigl(H(1-\alpha)\bigr)
		= \phi(H\alpha,L) + \theta\bigl(H(1-\alpha)\bigr)
	\]
	is continuous in $[0,\infty)\times[0,\infty)\times[0,1]$, hence it is uniformly continuous in
	$[0,\bar L]\times[0,\bar H]\times[0,1]$ for every $\bar L,\bar H>0$. From this we deduce that
	in $(0,\bar L)\times[0,\bar H)$ the function
	\[
		E(L,H) = \min_{\alpha\in[0,1]} g(L,H,\alpha)
	\]
	is continuous.
	Recalling that $\phi'(\infty)=1$ and therefore $\phi(H,0)=H$,
	the continuous extension to $L=0$ is provided by the minimization problem in the right hand side,
	i.e.
	\[
		\begin{split}
			\min_{\alpha\in[0,1]} g(0,H,\alpha)
			&= \min_{\alpha\in[0,1]} \bigl[\phi(H\alpha,0) + \theta\bigl(H(1-\alpha)\bigr)\bigr]
			= \min_{\alpha\in[0,1]} \bigl[H\alpha + \theta\bigl(H(1-\alpha)\bigr)\bigr] \\
			&= \min_{\alpha\in[0,1]} \bigl[H - \rho\bigl(H(1-\alpha)\bigr)\bigr]
			= H - \rho(H) = \theta(H). \qedhere
		\end{split}
	\]
\end{proof}

\subsection{The input is not a minimizer}\label{sec:nonlinear-not-min}

Recall from \autoref{rmk:not-min-cantor-function} that the Cantor function provides a simple example
where the input is not a minimizer even in the nonlinear setting.
This is generalized in the following theorem to an input datum $g$ with at least $\alpha$-growth
at one endpoint, providing the nonlinear analogue of \autoref{thm:not-min-alpha-growth}.

\begin{theorem}\label{thm:not-min-alpha-growth-nonlinear}
	Let $g\in\BVreg\bigl((a,b)\bigr)$ be a function with the property that there exist $\alpha,c>0$ and
	$0<\delta<b-a$ such that $g(x)-g^+(a)\geq c(x-a)^\alpha$ for all $x\in(a,a+\delta]$.
	Assume that there exist $C,\eta>0$ such that $E(\delta,H)\geq C H^\eta$ for every $H\in[0,\bar H]$,
	where $\bar H=\esssup_{(a,a+\delta]}g-g^+(a)$.
	If
	\[
		\gamma > \eta \left(1+\frac1{p\alpha}\right)^{-1} = \frac{p\alpha\eta}{p\alpha+1}
	\]
	then $g$ is not a minimizer of the functional
	\[
		F(u) = \TVphitheta(u) + \lambda\norm{u-g}_p^\gamma
	\]
	for any $p\in[1,\infty]$ and $\lambda>0$.
\end{theorem}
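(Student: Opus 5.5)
The plan is to follow the proof of \autoref{thm:not-min-alpha-growth}: truncate $g$ to a constant near $a$ and compare energies. The linear saving $h$ coming from \autoref{lem:truncation} is replaced by a lower bound through the boundary value problem \eqref{eq:E-def}. We may assume $g^+(a)=0$. The main obstacle is that $\TVphitheta$ does not behave well under truncation. In particular, choosing the truncation level $\omega(y)=\sup_{(a,y]}g$ as in the linear case may create or alter a jump at $y$, whose cost under the concave $\theta$ we cannot control. We therefore choose the truncation point and the truncation value so that the competitor has no new jump at all.

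\emph{Construction.} For small $t\in(0,\bar H)$ let $y_t=\sup\set{x\in(a,a+\delta]}{\esssup_{(a,x)}g\leq t}$. Then $y_t>a$ because $g^+(a)=0$, and $y_t\to a^+$ as $t\to0^+$, since $g\geq c(x-a)^\alpha>0$. Set $H_t=g^+(y_t)$. By regularity of $g$ and the definition of $y_t$, we have $H_t\geq t\geq g$ a.e.\ on $(a,y_t)$; also $H_t\geq c(y_t-a)^\alpha$, and $H_t\to0$ because $g$ is regulated. Define $u_t=H_t$ on $(a,y_t]$ and $u_t=g$ on $(y_t,b)$. Then $u_t$ is continuous at $y_t$, so
\[
	\TVphitheta(g)-\TVphitheta(u_t)=\TVphitheta\bigl(g;(a,y_t]\bigr),
\]
where the right-hand side includes the possible jump of $g$ at $y_t$.

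\emph{Saving in the variation.} We claim $\TVphitheta\bigl(g;(a,y_t]\bigr)\geq E(\delta,H_t)$. Extend $g|_{(a,y_t]}$ by the constant $H_t$ on $(y_t,a+\delta)$ and translate to $(0,\delta)$. This gives an admissible function for $E(\delta,H_t)$ whose nonlinear variation is exactly $\TVphitheta(g;(a,y_t])$, with the jump at $y_t$ now interior; here one uses $y_t<a+\delta$, adjusting $t$ if necessary. Hence the saving is at least $E(\delta,H_t)\geq C H_t^\eta$ by hypothesis. If that extension were awkward, one could instead apply \eqref{eq:E-def} on $(0,y_t-a+\varepsilon)$ and use the monotonicity in $L$ from \autoref{prop:E-monotone}\ref{it:E-monotone} together with $y_t-a\leq\delta$.

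\emph{Fidelity and conclusion.} On $(a,y_t)$ we have $0\leq u_t-g\leq H_t-c(x-a)^\alpha$, and $y_t\leq a+(H_t/c)^{1/\alpha}$. As in \autoref{thm:not-min-alpha-growth}, \autoref{lem:holder-integral} gives
\[
	\norm{u_t-g}_p^\gamma\leq K(p,\alpha)^{\gamma/p}c^{-\gamma/(p\alpha)}H_t^{\gamma(1+1/(p\alpha))},
\]
with the obvious modification for $p=\infty$. Therefore
\[
	F(u_t)-F(g)\leq -CH_t^\eta+\lambda K(p,\alpha)^{\gamma/p}c^{-\gamma/(p\alpha)}H_t^{\gamma(1+1/(p\alpha))}.
\]
The assumption on $\gamma$ says exactly that the second exponent exceeds $\eta$. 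Since $H_t\to0^+$ as $t\to0^+$, the right-hand side is negative for $t$ small, so $g$ is not a minimizer for any $\lambda>0$.
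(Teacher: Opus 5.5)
Your proof is correct and follows the same strategy as the paper: truncate $g$ to a constant near $a$, bound the saved nonlinear variation from below through $E(\delta,\cdot)$, and bound the fidelity as in \autoref{thm:not-min-alpha-growth} via \autoref{lem:holder-integral}. The only difference is technical. The paper truncates at a continuity point $x_{y,\varepsilon}$ of $g$ at the level $\omega(y)$, pays a jump cost $\theta(\varepsilon)$, and uses the monotonicity of $E$ in both arguments before letting $\varepsilon\to0$. Your choice of $y_t$ and $H_t=g^+(y_t)$ instead makes the competitor exactly continuous at the truncation point, so extending by a constant gives the bound $E(\delta,H_t)$ directly with no error term.
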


\begin{proof}
	Define the non-decreasing function $\omega:(a,a+\delta]\to\setR$
	\[
		\omega(x) = \esssup_{(a,x]} g
	\]
	and observe that we have $\omega(x) \geq g^+(a) + c(x-a)^\alpha$ and $\omega^+(a)=g^+(a)$.
	Given $y\in(a,a+\delta]$, introduce the auxiliary parameter $h=h(y)=\omega(y)-g^+(a)\geq c(y-a)^\alpha>0$.

	For every $\eps>0$, the set $\set{x\in(a,y]}{g(x)\geq\omega(y)-\eps}$ has positive $\leb^1$-measure,
	and $\leb^1$-a.e.\ $x\in(a,y]$ is a continuity point of $g$, therefore there exists
	$x_{y,\eps}\in(a,y]$ such that $g(x_{y,\eps})\geq\omega(y)-\eps$ and $g$ is continuous in $x_{y,\eps}$.
	From this continuity, we deduce $g(x_{y,\eps})\leq\omega(y)$, hence $\abs{\omega(y)-g(x_{y,\eps})}\leq\eps$.
	Define the truncated function $u_{y,\eps}\in\BV\bigl((a,b)\bigr)$
	\[
		u_{y,\eps}(x) = \begin{cases} \omega(y) & x\in(a,x_{y,\eps}), \\ g(x) & x\in[x_{y,\eps},b). \end{cases}
	\]
	We have
	\[
		\begin{split}
			\TVphitheta\bigl(u_{y,\eps};(a,b)\bigr)
			&= \TVphitheta\bigl(u_{y,\eps};(a,x_{y,\eps})\bigr)
			+ \theta(\abs{u_{y,\eps}^-(x_{y,\eps})-u_{y,\eps}^+(x_{y,\eps})})
			+ \TVphitheta\bigl(u_{y,\eps};(x_{y,\eps},b)\bigr) \\
			&= \theta\bigl(\abs{\omega(y)-g(x_{y,\eps})}\bigr) + \TVphitheta\bigl(g;(x_{y,\eps},b)\bigr) \\
			&\leq \theta(\eps) + \TVphitheta\bigl(g;(x_{y,\eps},b)\bigr).
		\end{split}
	\]
	On the other hand, recalling the continuity of $g$ in $x_{y,\eps}$, $x_{y,\eps}-a\leq y-a\leq\delta$,
	the monotonicity stated in \autoref{prop:E-monotone}, and the lower bound for $E(\delta,\plchldr)$,
	we have
	\[
		\begin{split}
			\TVphitheta\bigl(g;(a,b)\bigr)
			&= \TVphitheta\bigl(g;(a,x_{y,\eps})\bigr) + \TVphitheta\bigl(g;(x_{y,\eps},b)\bigr) \\
			&\geq E\bigl(x_{y,\eps}-a,\abs{g(x_{y,\eps})-g^+(a)}\bigr) + \TVphitheta\bigl(g;(x_{y,\eps},b)\bigr) \\
			&\geq E\bigl(\delta,\abs{\omega(y)-g^+(a)}-\eps\bigr) + \TVphitheta\bigl(g;(x_{y,\eps},b)\bigr) \\
			&= E\bigl(\delta,h-\eps\bigr) + \TVphitheta\bigl(g;(x_{y,\eps},b)\bigr) \\
			&\geq C(h-\eps)^\eta + \TVphitheta\bigl(g;(x_{y,\eps},b)\bigr).
		\end{split}
	\]
	From these two total variation estimates we get
	\[
		\TVphitheta\bigl(g;(a,b)\bigr)
		\geq \TVphitheta\bigl(u_{y,\eps};(a,b)\bigr)
		+ C(h-\eps)^\eta
		- \theta(\eps).
	\]

	Since for $\leb^1$-a.e.\ $x\in(a,a+\delta]$ we have $u_{y,\eps}(x)\geq g(x)$,
	with a similar computation as in \autoref{thm:not-min-alpha-growth}, we have
	\[
		\norm{u_{y,\eps}-g}_{L^p((a,b))}^p
		\leq K(p,\alpha) C^{-1/\alpha} h^{p+1/\alpha}.
	\]

	Combining the estimates for the total variation and the fidelity term we arrive at
	\[
		F(u_{y,\eps})-F(g)
		\leq \theta(\eps) - C (h-\eps)^\eta + \lambda K(p,\alpha)^{\gamma/p} c^{-\gamma/(p\alpha)} h^{\gamma\bigl(1+\frac{1}{p\alpha}\bigr)},
	\]
	which can be made strictly negative under the assumption on the exponent $\gamma$
	sending $\eps\to0$ and $y\to0$, which implies $h(y)\to0$.
\end{proof}

Observe that \autoref{prop:E-lower-bounds} provides a convenient way to verify the assumption on
$E(\delta,\plchldr)$.
For instance, if $\phi(t)=c t^\eta+o(t^\eta)$ for $t\to0$ with $\eta>1$, then by
\autoref{rmk:E-lower-bound} we have $E(L,H)=\phi_L(H)=cL^{1-\eta}H^\eta+o(H^\eta)$ for $H\to0$, hence
the assumption is satisfied.

\subsection{Is the input a minimizer?}\label{sec:nonlinear-min}

The intent of this section is to try and replicate the results contained in \autoref{sec:linear-min}.
Unfortunately, as it turns out, the expected statements are not true. Indeed, the culprit lies in
the failure of the nonlinear analogue of \autoref{prop:u_contained_in_g}: more specifically, while
it would be possible to replicate the majority of its proof, the sub-case \ref{it:2b} proves to be an insurmountable obstruction, due to the fact that not all monotone functions are minimizers of the boundary value problem for $\TVphitheta$, a simple fact that was exploited in \ref{it:2b} to perform the local truncation without degrading the linear total variation.

Case \ref{it:nonlinear-competitors-g-not-min} of \autoref{prop:nonlinear-competitors} demonstrates a
simple situation where the nonlinear analogues of \autoref{prop:u_contained_in_g} and
\autoref{thm:min-piecewise-monotone} both fail, exploiting the behavior at infinity of the
nonlinearity $\phi$, see \eqref{eq:phi-asymptote-no}. The sharpness of the assumption is highlighted
by the opposite behavior under the complementary conditions in case
\ref{it:nonlinear-competitors-g-maybe-min} of the same proposition. In the particular case
$p=\gamma$, conditions \eqref{eq:phi-asymptote} amount to the existence or non-existence of an
oblique asymptote for $\phi$; in the case $\gamma>p$, they are related to a sufficiently fast
convergence rate to the oblique asymptote.

A more detailed analysis of \ref{it:2b} leads us to consider the penalized problem in
\autoref{prop:not-min-zero-nonlinear}, which reveals the underlying reason for the failure of the
desired nonlinear statements: constant functions are not optimal profiles for transitioning between
two levels, not even in the presence of a fidelity term with respect to a constant datum.

\begin{proposition}\label{prop:nonlinear-competitors}
	Let $L,H>0$. Let $g\in\BV\bigl((0,L)\bigr)$ given by
	\[
		g(x) = H\Theta(x-x_0) = \begin{cases} 0 & x < x_0, \\ H & x > x_0, \end{cases}
	\]
	with $x_0\in(0,L)$. For $\alpha\geq\alpha^*\coloneqq\frac H{2\min\{x_0,L-x_0\}}$, define the competitor
	$u_\alpha\in\BV\bigl((0,L)\bigr)$ as
	\[
		u_\alpha(x)
		= \left(\frac H2+\alpha(x-x_0)\right)\vee0\wedge H
		= \begin{cases}
			0                        & x<x_0-\frac H{2\alpha},                      \\
			\frac H2 + \alpha(x-x_0) & x_0-\frac H{2\alpha}<x<x_0+\frac H{2\alpha}, \\
			H                        & x<x_0-\frac H{2\alpha}.
		\end{cases}
	\]
	\begin{subequations}\label{eq:phi-asymptote}
		\begin{enumerate}
			\item \label{it:nonlinear-competitors-g-not-min} If $\theta(H)=H$ and
			      \begin{equation}\label{eq:phi-asymptote-no}
				      \inf_{\alpha\geq\alpha^*} \frac{\phi(\alpha)-\alpha}{\alpha^{1-\gamma/p}} = -\infty,
			      \end{equation}
			      then for every $\lambda\geq0$ there exists $\alpha\geq\alpha^*$ such that $F(u_\alpha)<F(g)$;
			      hence the input $g$ is not optimal.
			\item \label{it:nonlinear-competitors-g-maybe-min} If instead either $\theta(H)<H$, or $\theta(H)=H$ and
			      \begin{equation}\label{eq:phi-asymptote-yes}
				      \inf_{\alpha\geq\alpha^*} \frac{\phi(\alpha)-\alpha}{\alpha^{1-\gamma/p}} > -\infty,
			      \end{equation}
			      then for every $\lambda\geq0$ sufficiently large we have $F(u_\alpha)\geq F(g)$ for all
			      $\alpha\geq\alpha^*$; hence the input $g$ performs better than these particular competitors.
		\end{enumerate}
	\end{subequations}
\end{proposition}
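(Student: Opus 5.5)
Everything reduces to explicit computation of $F(u_\alpha)-F(g)$, with $F(u)=\TVphitheta(u)+\lambda\norm{u-g}_p^\gamma$. The condition $\alpha\geq\alpha^*$ guarantees that the ramp of $u_\alpha$, namely the interval $(x_0-\frac H{2\alpha},x_0+\frac H{2\alpha})$, lies inside $(0,L)$. Hence $u_\alpha$ is Lipschitz, with $u_\alpha'=\alpha$ on an interval of length $H/\alpha$ and $u_\alpha'=0$ elsewhere. The two energies are then
\[
	\TVphitheta(u_\alpha)=\frac H\alpha\,\phi(\alpha),
	\qquad
	\TVphitheta(g)=\theta(H),
\]
the latter because $g$ has a single jump of height $H$ and no other derivative.

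For the fidelity term, $\abs{u_\alpha-g}$ is supported in the ramp and is symmetric about $x_0$. It equals $\frac H2+\alpha(x-x_0)$ on the left half and $\frac H2-\alpha(x-x_0)$ on the right half. A direct integration gives, for $p<\infty$,
\[
	\norm{u_\alpha-g}_p^p=2\int_0^{H/(2\alpha)}(\alpha s)^p\,\mathrm ds=\frac{2}{p+1}\Bigl(\frac H2\Bigr)^{p+1}\alpha^{-1},
\]
and $\norm{u_\alpha-g}_\infty=H/2$. Therefore $\lambda\norm{u_\alpha-g}_p^\gamma=\lambda c_{p,H}\,\alpha^{-\gamma/p}$ for an explicit constant $c_{p,H}>0$, interpreting $\gamma/p=0$ when $p=\infty$. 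Combining,
\[
	F(u_\alpha)-F(g)=\frac H\alpha\bigl(\phi(\alpha)-\alpha\bigr)+\bigl(H-\theta(H)\bigr)+\lambda c_{p,H}\alpha^{-\gamma/p}
	=\alpha^{-\gamma/p}\Bigl[H\frac{\phi(\alpha)-\alpha}{\alpha^{1-\gamma/p}}+\lambda c_{p,H}\Bigr]+\bigl(H-\theta(H)\bigr).
\]
Both cases are read off from this identity.

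\emph{Case (i).} Here $\theta(H)=H$, so the last term vanishes. By \eqref{eq:phi-asymptote-no}, for any fixed $\lambda$ we can choose $\alpha\geq\alpha^*$ with $H(\phi(\alpha)-\alpha)/\alpha^{1-\gamma/p}<-\lambda c_{p,H}$. For that $\alpha$ the bracket is negative, so $F(u_\alpha)<F(g)$.

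\emph{Case (ii), with $\theta(H)=H$.} Let $-M>-\infty$ be the infimum in \eqref{eq:phi-asymptote-yes}. Taking $\lambda c_{p,H}\geq HM$ makes the bracket nonnegative for every $\alpha\geq\alpha^*$.

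\emph{Case (ii), with $\theta(H)<H$.} Now there is a positive gap $H-\theta(H)$, and the bracketed term may be negative. I would bound it below using only $\phi(\alpha)\geq0$: then $\frac H\alpha(\phi(\alpha)-\alpha)\geq -H$, which is too weak on its own. Instead I would use convexity together with $\phi'(\infty)=1$, which gives $\phi(\alpha)\leq\alpha$, so the quantity $\frac H\alpha(\phi(\alpha)-\alpha)$ lies in $[-H,0]$. The bracket is then at least $\alpha^{-\gamma/p}(\lambda c_{p,H})-H\bigl(1-\phi(\alpha)/\alpha\bigr)$, and since $\phi(\alpha)/\alpha\to1$, the negative part tends to $0$ as $\alpha\to\infty$. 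I would split the range of $\alpha$ into two parts:
\begin{itemize}
	\item For $\alpha$ large, $H\bigl(1-\phi(\alpha)/\alpha\bigr)<H-\theta(H)$, so $F(u_\alpha)-F(g)>0$ independently of $\lambda$.
	\item On the remaining compact range $[\alpha^*,A]$, the factor $\alpha^{-\gamma/p}$ is bounded below by $A^{-\gamma/p}$, so choosing $\lambda\geq HA^{\gamma/p}/c_{p,H}$ makes the total nonnegative.
\end{itemize}

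The only delicate points are this compactness splitting in the $\theta(H)<H$ case, and the $p=\infty$ bookkeeping, where $\gamma/p=0$ and the fidelity term becomes the constant $\lambda(H/2)^\gamma$. Neither should be a serious obstacle.
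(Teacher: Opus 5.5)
Your proposal is correct and follows the paper's proof: you use the same explicit formulas $\TVphitheta(u_\alpha)=H\phi(\alpha)/\alpha$, $F(g)=\theta(H)$ and $\norm{u_\alpha-g}_p^\gamma=c_{p,H}\alpha^{-\gamma/p}$, factor out $\alpha^{-\gamma/p}$, and read off both cases. The only variation is in the subcase $\theta(H)<H$. There the paper uses continuity in $\alpha$ and the limits at $0$ and $\infty$ to show that the infimum of $\alpha^{\gamma/p}\bigl(H\phi(\alpha)/\alpha-\theta(H)\bigr)$ is finite. Your split into large $\alpha$ (where $\phi(\alpha)/\alpha\to1$ lets the gap $H-\theta(H)$ win for every $\lambda$) and a compact range (where only $\phi\geq0$ is needed, so the bound $\phi(\alpha)\leq\alpha$ is superfluous) reaches the same conclusion, and it also handles $p=\infty$ cleanly.
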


\begin{proof}
	We have $F(g)=\theta(H)$ and
	\[
		F(u_\alpha)
		= H \frac{\phi(\alpha)}\alpha
		+ \frac{\lambda H^{\gamma(1+1/p)}}{2^\gamma(p+1)^{\gamma/p}\alpha^{\gamma/p}}.
	\]
	Assume $\theta(H)<H$. The difference in energy can be written as
	\[
		F(u_\alpha)-F(g)
		= \alpha^{-\gamma/p} \left[ \alpha^{\gamma/p} \left(H\frac{\phi(\alpha)}\alpha-\theta(H)\right)
			+ \frac{\lambda H^{\gamma(1+1/p)}}{2^\gamma(p+1)^{\gamma/p}} \right].
	\]
	Since $H\frac{\phi(\alpha)}\alpha-\theta(H)$ is a bounded function of $\alpha$ and $\gamma/p>0$,
	we have
	\[
		\lim_{\alpha\to0^+}
		\left[\alpha^{\gamma/p} \left(H\frac{\phi(\alpha)}\alpha-\theta(H)\right)\right] = 0.
	\]
	On the other hand, $\lim_{\alpha\to\infty} \left(H\frac{\phi(\alpha)}\alpha-\theta(H)\right) =
		H-\theta(H) > 0$, therefore
	\[
		\lim_{\alpha\to\infty}
		\left[\alpha^{\gamma/p} \left(H\frac{\phi(\alpha)}\alpha-\theta(H)\right)\right] = \infty.
	\]
	By continuity in $\alpha$, it follows that
	\[
		m \coloneqq
		\inf_{\alpha\geq\alpha^*} \left[\alpha^{\gamma/p} \left(H\frac{\phi(\alpha)}\alpha-\theta(H)\right)\right]
		\geq \inf_{\alpha>0} \left[\alpha^{\gamma/p} \left(H\frac{\phi(\alpha)}\alpha-\theta(H)\right)\right]
		> -\infty.
	\]
	This implies that for every $\lambda$ sufficiently large, namely $\lambda >
		-\frac{2^{\gamma}(p+1)^{\gamma/p}}{H^{\gamma(1+1/p)}}m$, we have $F(u_\alpha)-F(g)>0$
	for every $\alpha$.

	Assume $\theta(H)=H$. In this case the difference in energy can be written as
	\[
		F(u_\alpha)-F(g)
		= \alpha^{-\gamma/p} \left[ H \frac{\phi(\alpha)-\alpha}{\alpha^{1-\gamma/p}}
			+ \frac{\lambda H^{\gamma(1+1/p)}}{2^\gamma(p+1)^{\gamma/p}} \right].
	\]
	In case (i) we have that for every $\lambda\geq0$ the infimum of the square bracket is $-\infty$,
	therefore there exists $\alpha\geq\alpha^*$ for which it is negative, hence $F(u_\alpha)-F(g)<0$.
	In case (ii) we have that for $\lambda$ sufficiently large, namely
	\[
		\lambda \geq -2^\gamma(p+1)^{\gamma/p}H^{1-\gamma(1+1/p)}
		\inf_{\alpha\geq\alpha^*} \frac{\phi(\alpha)-\alpha}{\alpha^{1-\gamma/p}},
	\]
	the infimum of the square bracket is non-negative, hence $F(u_\alpha)-F(g)\geq0$ for all
	$\alpha\geq\alpha^*$.
\end{proof}

To replicate the truncation argument used in \ref{it:2b}, we would need the zero function to be a
global minimizer of the functional
\begin{equation}\label{eq:penalized-functional}
	F(u) = \TVphitheta\bigl(u;(0,L)\bigr) + \theta\bigl(\abs{H-u^-(L)}\bigr) + \lambda\norm{u}_p^\gamma
\end{equation}
among all functions $u\in BV\bigl((0,L)\bigr)$ with $u^+(0)=0$.
In \autoref{prop:not-min-zero-nonlinear} below we show that this is not the case.

By a line of reasoning similar to that in the proof of \autoref{thm:bvp}, we can show that a
minimizer $u$ must be increasing (see the proof of monotonicity at the beginning of the proof) and
with values in $[0,H]$, its Cantor and jump part can be moved forward to $x=L$ lowering both the
total variation and the fidelity term, hence we may assume $u\in W^{1,1}\bigl((0,L)\bigr)$, and
finally $u$ must be convex since the increasing rearrangement of $u'$ does not change the total
variation but lowers the fidelity.

\begin{proposition}\label{prop:not-min-zero-nonlinear}
	Let $L,H>0$, let $p,\gamma\geq1$, and assume $\theta'_-(H)>\phi'_+(0)$.
	Then the function $g=0$ is not a local minimizer in $\BV\bigl((0,L)\bigr)$ of the functional
	\[
		F(u) = \TVphitheta\bigl(u;(0,L)\bigr) + \theta\bigl(\abs{H-u^-(L)}\bigr) + \lambda\norm{u}_p^\gamma
	\]
	with boundary condition $u^+(0)=0$ for any $\lambda>0$.
\end{proposition}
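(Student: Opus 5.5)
Because $g=0$ is the comparison state, we compute $F(0)=\theta(H)$. We will exhibit a one-parameter family of competitors $u_\eps$ that converge to $0$ in $\BV$ and satisfy $F(u_\eps)<F(0)$ for small $\eps>0$. Following the structural remarks preceding the statement, we look for $W^{1,1}$ ramps that are non-decreasing and convex and concentrate their growth near the right endpoint $x=L$, where the fidelity cost is smallest relative to the gain in the boundary penalty. A natural first try is the linear ramp $u_\eps(x)=\eps x/L$, which satisfies $u_\eps^+(0)=0$, $u_\eps^-(L)=\eps$ and $\TVphitheta(u_\eps)=L\phi(\eps/L)$. A better choice is a ramp supported on $(L-\ell,L)$ with slope $s$, i.e.\ $u(x)=s\,(x-L+\ell)_+$, of height $s\ell$. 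We then expand each of the three terms to first order in the height.

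\textbf{Expansion of the terms.} For height $\eps=s\ell$ small, the total variation term equals $\ell\,\phi(s)$. The boundary penalty equals $\theta(H-\eps)\le\theta(H)-\theta'_-(H)\,\eps$ by concavity; in fact $\theta(H-\eps)=\theta(H)-\theta'_-(H)\eps+o(\eps)$, and concavity gives at least the inequality in the favourable direction, since the left derivative bounds the secant slope from below. The fidelity term equals $\lambda\bigl(s^p\ell^{p+1}/(p+1)\bigr)^{\gamma/p}$. Writing $\ell=\eps/s$, it becomes $\lambda c_p\,\eps^{\gamma(1+1/p)} s^{-\gamma/p}$ with $c_p=(p+1)^{-\gamma/p}$, while the total variation term becomes $\eps\,\phi(s)/s$. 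Hence
\[
F(u)-F(0)\le \eps\Bigl(\frac{\phi(s)}{s}-\theta'_-(H)\Bigr)+\lambda c_p\,\eps^{\gamma(1+1/p)}s^{-\gamma/p}.
\]

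\textbf{Choice of the slope and order of the argument.} The plan proceeds in three steps. First, since $\phi$ is convex with $\phi(0)=0$, we have $\phi(s)/s\downarrow\phi'_+(0)$ as $s\to0^+$. The hypothesis $\theta'_-(H)>\phi'_+(0)$ therefore lets us fix $s_0>0$ small such that $\phi(s_0)/s_0<\theta'_-(H)-\kappa$ for some $\kappa>0$. Second, with $s=s_0$ fixed, we let $\eps\to0^+$; this requires $\ell=\eps/s_0\le L$, which holds for small $\eps$. Because $\gamma\ge1$, the exponent satisfies $\gamma(1+1/p)>1$, so the fidelity term is $o(\eps)$ and the bracket yields $F(u_\eps)-F(0)\le-\kappa\eps+o(\eps)<0$. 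Third, we note that $\TVphitheta(u_\eps)\to0$ and $\norm{u_\eps}_\infty\to0$, so $u_\eps\to0$ in $\BV$ and in every reasonable topology; this defeats local minimality for every $\lambda>0$.

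\textbf{Main obstacle.} The key step is the choice of slope. Using the natural ``full-length'' ramp with slope $\eps/L\to0$ would also give $\phi(\eps/L)L\approx\phi'_+(0)\eps$ and work, but with the short ramp we must ensure that the relevant slope quotient is really controlled by $\phi'_+(0)$ rather than by $\phi'(\infty)=1$. We also need to justify the first-order bound $\theta(H-\eps)\le\theta(H)-\theta'_-(H)\eps$, which follows directly from concavity, since the left derivative at $H$ is at most the secant slope on $[H-\eps,H]$. Both facts are elementary, so I expect the proof to be short, the only care being the case $p=\infty$ (if allowed), where the fidelity term is $\lambda\eps^\gamma$ and we need $\gamma>1$. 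We would therefore use the linear ramp over all of $(0,L)$, whose $L^p$ norm is $O(\eps)$, and treat the borderline $\gamma=1$ by noting that the fidelity contributes $\lambda\eps(L/(p+1))^{1/p}$, which must be compared with $\kappa\eps$. For $p<\infty$ this last issue disappears by shrinking $\ell$, which is why the localized ramp is preferred.
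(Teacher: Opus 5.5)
Your proof is correct for $p<\infty$, but it takes a different route from the paper.

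\textbf{The paper's route.} The paper uses the first-variation formulas of the appendix. Along a fixed non-negative, non-decreasing direction $v$ with $v^+(0)=0$, the one-sided G\^ateaux derivative at $0$ is $\bigl(\phi'_+(0)-\theta'_-(H)\bigr)v^-(L)+\delta_{\gamma,1}\lambda\norm{v}_p$. It is made negative by choosing $v(x)=(x/L)^q$ with $q$ large, depending on $\lambda$, so that for $\gamma=1$ the term $\lambda\norm{v}_p$ is negligible against $v^-(L)=1$.

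\textbf{Your route.} You leave rays $\eps v$ and follow a curve of competitors whose shape changes: ramps of fixed small slope $s_0$ supported on $(L-\eps/s_0,L)$. You then estimate the three terms directly; your concavity bound $\theta(H-\eps)\le\theta(H)-\theta'_-(H)\eps$ and your computation of $\norm{u}_p^\gamma$ are right.

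\textbf{Comparison.} Your argument is self-contained, avoids the first-variation computation, and uses an $s_0$ independent of $\lambda$. It also proves more. Since the fidelity is $O(\eps^{\gamma(1+1/p)})$, the conclusion holds whenever $\gamma>p/(p+1)$, which is the threshold of \autoref{thm:not-min-alpha-growth} with $\alpha=1$, the same concentration mechanism. This includes $\gamma\in(p/(p+1),1)$, where along every ray the fidelity term dominates, so the paper's method cannot apply. The paper's route, in turn, exhibits the failure of stationarity of $0$ along a single fixed direction. It also plugs into the first-variation machinery reused later for the optimality conditions of the penalized problem.

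\textbf{Two minor points.} First, the convergence $u_\eps\to0$ in $\BV$ is properly justified by $\abs{Du_\eps}\bigl((0,L)\bigr)=\eps$, not by $\TVphitheta(u_\eps)\to0$. Second, your caveat about $p=\infty$ is well founded, but your last paragraph is muddled. For $\gamma=1$ the full-length ramp only works for small $\lambda$, and for $p=\infty$ it is no better than the localized one. In fact, for $p=\infty$, $\gamma=1$, $\lambda\ge1$ no competitor can work at all. Since $\theta$ is $1$-Lipschitz and $\norm{u}_\infty\ge\abs{u^-(L)}$, one gets $F(u)\ge\theta(H)+(\lambda-1)\abs{u^-(L)}\ge F(0)$. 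The paper's choice $v=(x/L)^q$ also has $\norm{v}_\infty=1$, so $p<\infty$ is implicitly assumed in the statement when $\gamma=1$.
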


\begin{proof}
	Considering non-negative variations $v\in W^{1,1}\bigl((0,L);[0,\infty)\bigr)$ with $v^+(0)=0$,
	the one-sided directional derivative of the functional $F$ at $u=g$ is (see
	\autoref{sec:first-variation})
	\[
		\begin{split}
			\lim_{\eps\to0^+} \frac{F(g+\eps v)-F(g)}\eps
			&= \int_0^L \phi'_+(0)v'(x)\dx - \theta'_-(H)v^-(L) + \delta_{\gamma,1} \lambda \norm{v}_p \\
			&= \bigl(\phi'_+(0)-\theta'_-(H)\bigr) v^-(L) + \delta_{\gamma,1} \lambda \norm{v}_p.
		\end{split}
	\]
	For any $\lambda$, this can be made strictly negative by taking $v(x)=(x/L)^q$ with $q>0$ large enough.
	This disproves the local minimality of $g\in W^{1,1}\bigl((0,L)\bigr)$, hence in $BV\bigl((0,L)\bigr)$.
\end{proof}

\subsubsection{Numerical examples}\label{sec:numerical-examples}

We conclude this section by investigating numerically the nature of the global minimizer in a simple
situation. Let us take $p=\gamma=1$ and $\phi,\theta$ differentiable.
Recall that the minimizer is a convex function $u\in W^{1,1}\bigl((0,L);[0,H]\bigr)$ with $u^+(0)=0$.
Let $x_0=\max\set{x\in[0,L]}{u\rvert_{(0,x]}=0}$.
From \autoref{sec:first-variation-simple} we know that in the interval $(x_0,L)$ the optimal profile
$u$ is given by \eqref{eq:optimal-profile-explicit}:
\[
	u(x) = \begin{cases}
		0                                                       & x\in(0,x_0], \\
		\frac1\lambda \phi^*\bigl(\phi'(0)+\lambda(x-x_0)\bigr) & x\in(x_0,L).
	\end{cases}
\]

In this sub section, we consider two explicit examples:
\begin{align*}
	\phi_1(t)   & = t-\log(1+t),    &
	\phi_2(t)   & = \sqrt{1+t^2}-1,   \\
	\theta_1(t) & = \log(1+t),      &
	\theta_2(t) & = \sqrt{1+2t}-1.
\end{align*}
For the non-linearity $\phi_1$ we have $\phi_1'(0)=0$ and $\phi_1^*(\tau) = -\tau-\log(1-\tau)$,
therefore in the interval $(x_0,L)$
\[
	u(x) = -(x-x_0) - \frac{\log\bigl(1-\lambda(x-x_0)\bigr)}\lambda.
\]
On the other hand, for $\phi_2$ we have $\phi_2'(0)=0$ and $\phi_2^*(\tau) = 1-\sqrt{1-\tau^2}$,
hence in the interval $(x_0,L)$
\[
	u(x) = \frac{1-\sqrt{1-\lambda^2(x-x_0)^2}}\lambda.
\]
These two families of profiles are depicted in \autoref{fig:optimal-profiles}.
\begin{figure}[ht]
	\includegraphics[width=0.5\textwidth]{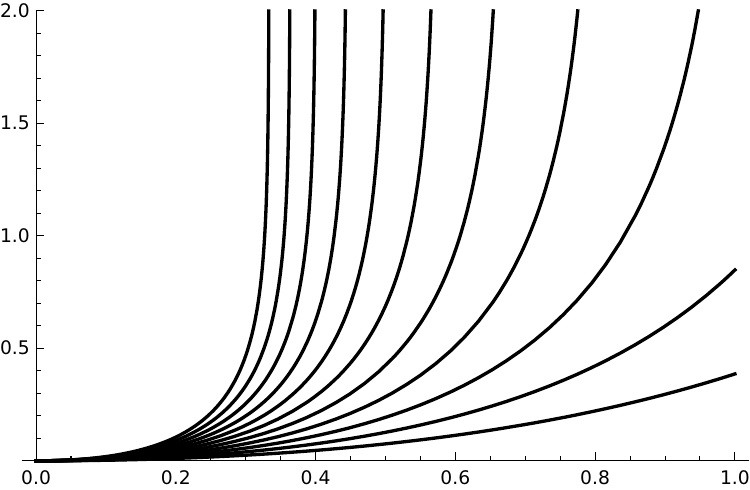}%
	\includegraphics[width=0.5\textwidth]{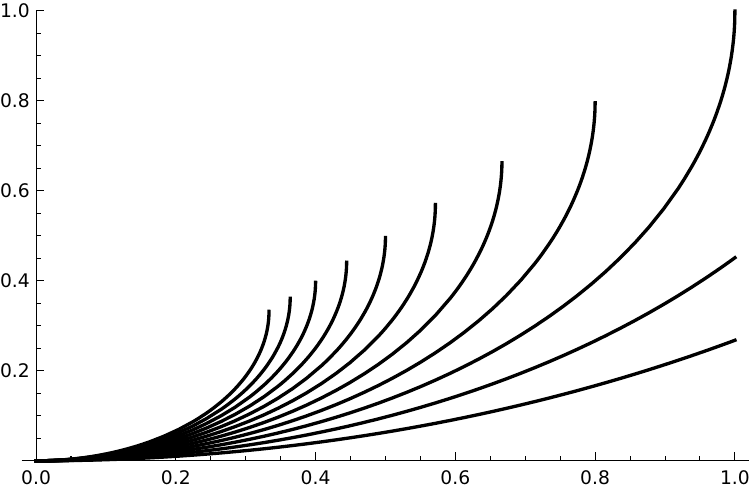}
	\caption{profiles $u$ shifted at $x_0=0$ for the non-linearities $\phi_1$ (left) and $\phi_2$ (right),
	with the parameter $\lambda$ ranging from $1/2$ (lowest curves) to $3$ (highest curves) in steps of $1/4$.
	Notice that in both cases the interval of definition of the profiles is $[0,1/\lambda)$.}
	\label{fig:optimal-profiles}
\end{figure}

In \autoref{fig:minimizers} we illustrate some minimizers $u$ of the functional $F(u)$ for the two
sets of non-linearities and various choices of the parameter $\lambda$. These are computed
numerically through a search of local minimizers and it is possible to verify that they follow the
shape of the analytical profiles introduced above.
\begin{figure}[ht]
	\includegraphics[width=0.5\textwidth]{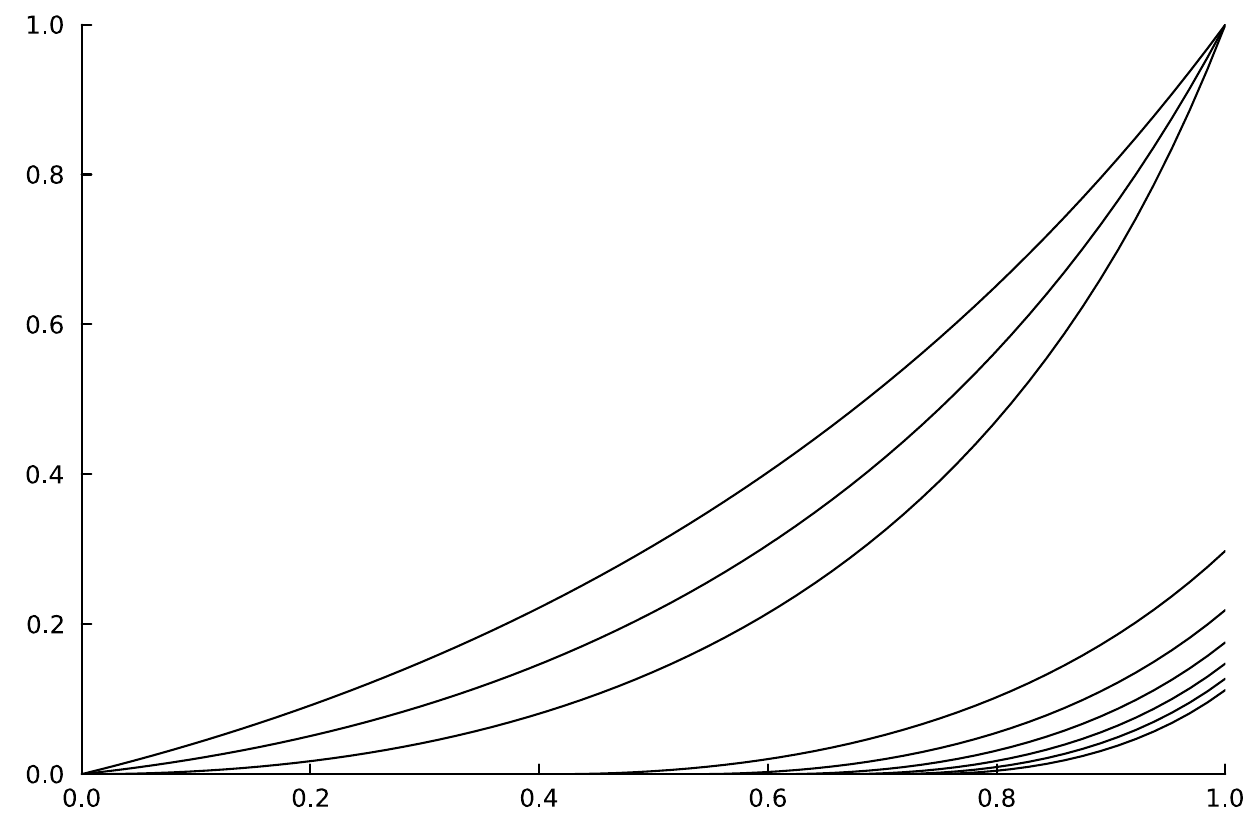}%
	\includegraphics[width=0.5\textwidth]{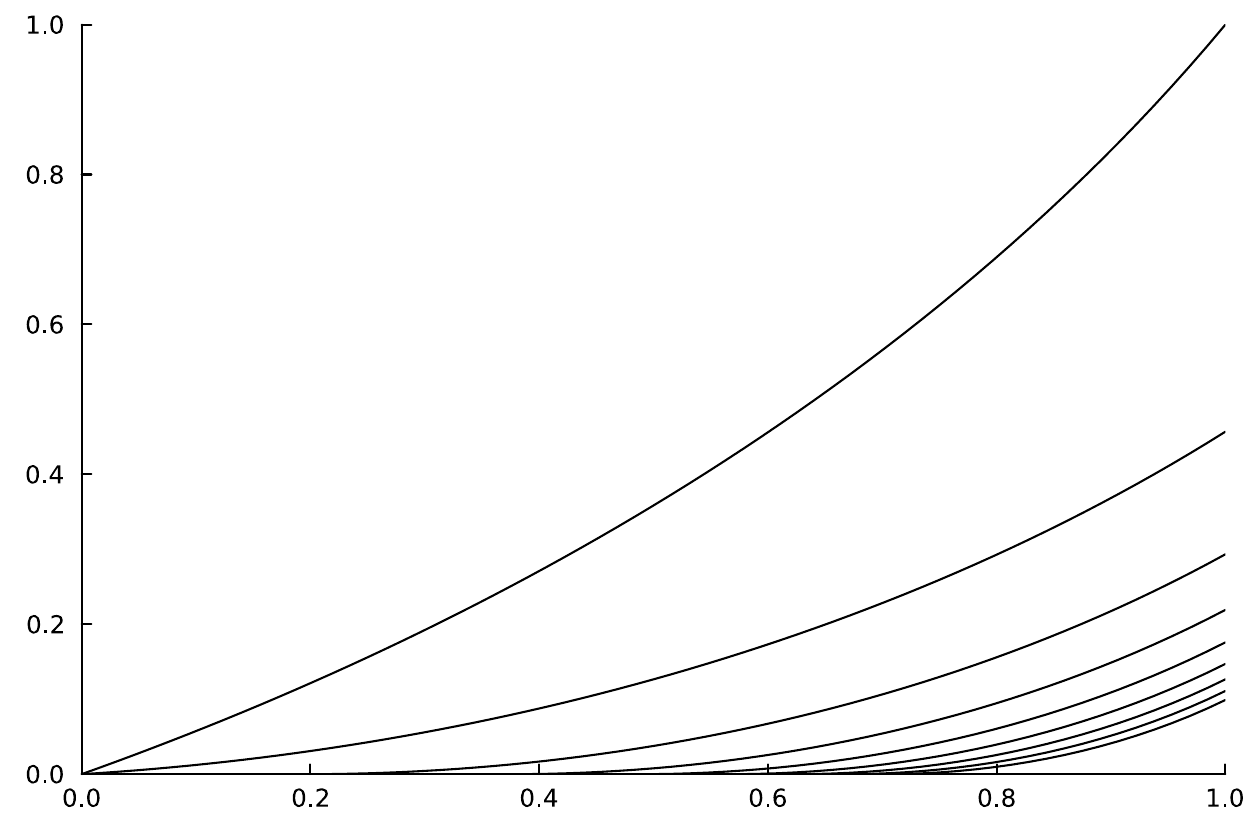}
	\caption{minimizers of $F$ with non-linearities $\phi_1,\theta_1$ (left) and $\phi_2,\theta_2$ (right),
		$L=H=1$, and with the parameter $\lambda$ ranging
		from $0.4$ (highest curves) to $2$ (lowest curves) in steps of $0.2$.}
	\label{fig:minimizers}
\end{figure}

In \autoref{fig:minimizer-slope} we plot the functions $u$, $u'$ and $\phi'(u')$, where $u$ is the
minimizer of the problem with non-linearities $\phi_1,\theta_1$ and parameters $L=H=1$, $\lambda=1.1$.
As predicted by the ODE \eqref{eq:optimal-profile-ode-simplified}, the function $\phi'(u')$ appears to be
affine in the interval where $u>0$.
\begin{figure}[ht]
	\centering
	\includegraphics[width=0.5\textwidth]{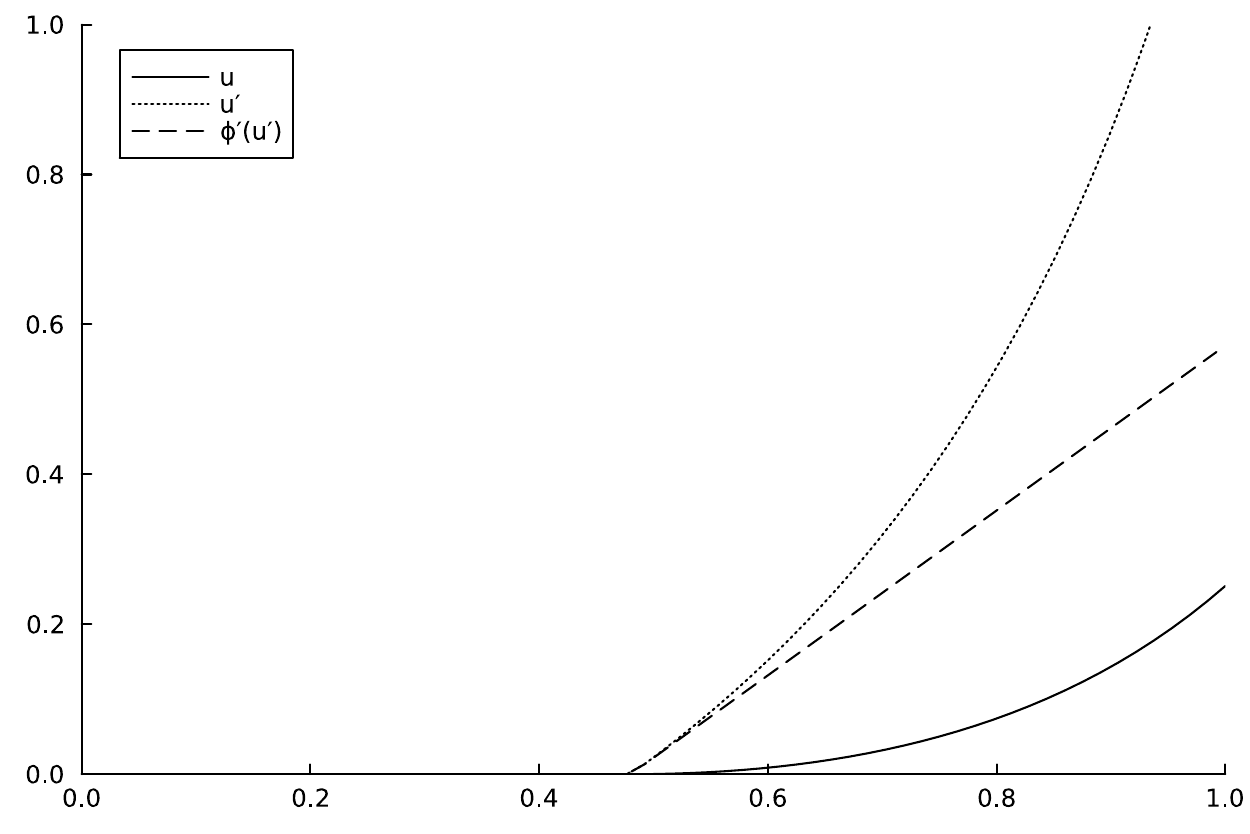}
	\caption{plot of $u$, $u'$ and $\phi'(u')$, where $u$ is the minimizer of the problem with
		non-linearities $\phi_1,\theta_1$ and parameters $L=H=1$, $\lambda=1.1$; notice that in the interval
		where $u>0$ the function $\phi'(u')$ is affine, in accordance with
		\eqref{eq:optimal-profile-ode-simplified}. The value of $x_0$, computed numerically with both
		Julia and Mathematica, is approximately $x_0\approx0.4801305$.}
	\label{fig:minimizer-slope}
\end{figure}

\paragraph{Numerical implementation}
The numerical approximations of the optimal profiles are obtained as follows.
Knowing that the minimizer of the functional $F$ should be a non-negative continuous function%
\footnote{Additionally, $u\in W^{1,1}\bigl((0,L);[0,H]\bigr)$ non-decreasing and convex.},
for a fixed $n\in\setN_+$ we introduce the uniform mesh $x_i=i\Delta x$, where $\Delta x=L/n$,
and approximate the unknown function $u$ with a continuous piecewise affine function
\[
	\hat u(x) = \sum_{i=1}^n \left(\frac{x_i-x}{x_i-x_{i-1}} u_{i-1}
	+ \frac{x-x_{i-1}}{x_i-x_{i-1}} u_i\right) \bm1_{(x_{i-1},x_i]}(x)
\]
parametrized by the vector $(u_1,\dots,u_n)\in[0,\infty)^n$, with the convention $u_0=0$.
The values $u_i=\hat u(x_i)$ correspond to a sampling of $u$ over the uniform mesh.
This leads to introducing the function $F_n:[0,\infty)^n\to[0,\infty)$ given by
\[
	\begin{split}
		F_n(u_1,\dots,u_n)
		&= F(\hat u)
		= \sum_{i=1}^n \phi\left(\abs*{\frac{u_i-u_{i-1}}{\Delta x}}\right) \Delta x
		+ \theta(\abs{H-u_n})
		+ \lambda \left( \sum_{i=1}^{n-1} u_i \Delta x + \frac{u_n}2 \Delta x \right) \\
		&= \left( \sum_{i=1}^n \phi\left(\abs*{\frac{u_i-u_{i-1}}{\Delta x}}\right)
		+ \lambda \left( \sum_{i=1}^{n-1} u_i + \frac{u_n}2 \right) \right) \Delta x
		+ \theta(\abs{H-u_n}),
	\end{split}
\]
which we minimize among the vectors $(u_1,\dots,u_n)\in[0,H]^n$.
The function $F_n$ amounts to approximating the $L^1$ norm in the fidelity with the trapezoidal rule,
the derivative $u'$ with finite differences, and the integral of $\phi(\abs{u'})$ with the rectangle rule.

Alternatively, the problem can be reformulated in terms of the discrete slopes
$(a_1,\dots,a_n)\in\setR^n$, which are related to the values $(u_1,\dots,u_n)\in\setR^n$ by the
transformation
\[
	a_i = \frac{u_i-u_{i-1}}{\Delta x}, \qquad
	u_i = \sum_{j=1}^i a_i \Delta x.
\]
Notice that the mapping $\setR^n\to\setR^n$ between $(u_i)_{i=1}^n$ and $(a_i)_{i=1}^n$ is a bijection.
Recalling the monotonicity of the optimal profile $u$, the slopes $(a_i)_{i=1}^n$ can be assumed to
be non-negative.

The function $G_n:[0,\infty)^n\to[0,\infty)$ which corresponds to $F_n$ in this set of coordinates,
i.e.\ $G_n(a_1,\dots,a_n)=F_n(u_1,\dots,u_n)$, is given by
\[
	\begin{split}
		& G_n(a_1,\dots,a_n) \\
		&= \Delta x \left( \sum_{i=1}^n \phi(a_i)
		+ \lambda \Delta x \left(\sum_{i=1}^{n-1} \sum_{j=1}^i a_j
			+ \frac12 \sum_{j=1}^n a_j \right) \right)
		+ \theta\oleft(\abs*{H - \Delta x \sum_{j=1}^n a_j}\right) \\
		&= \Delta x \left( \sum_{i=1}^n \phi(a_i)
		+ \lambda \Delta x \left(\sum_{i=1}^n a_i (n-i+1/2) \right) \right)
		+ \theta\oleft(\abs*{H - \Delta x \sum_{j=1}^n a_j}\right).
	\end{split}
\]

Not only the formulation in terms of the slopes avoids the bad conditioning arising from the finite
difference operator, but it also allows to easily restrict the search to monotone functions instead
of merely non-negative ones. These benefits manifest empirically in a faster convergence of the
optimization algorithm.

The actual numerical minimization has been implemented in the Julia programming language \cite{Julia}
using the box-constrained limited-memory Broyden--Fletcher--Goldfarb--Shanno (\mbox{L-BFGS-B}) algorithm
provided by the package \verb!Optim.jl! \cite{Optim.jl}. In addition, a multistart strategy from
some hand-picked and some random initial points has been employed in order to reduce the risk of
converging to a local minimum.

The approximate profiles obtained via minimization of $G_n$ have been validated by comparison with
the semi-analytical approach described in \autoref{sec:first-variation-simple}, which consists
in minimizing the scalar function defined in \eqref{eq:function-x_0} subjected to the constraint
\eqref{eq:constraint-x_0}.

As an example, in \autoref{fig:one_dim_min_phi_1} we plot the function $f$ defined in \eqref{eq:function-x_0}
associated with non-linearities $\phi_1,\theta_1$ and parameters $L=H=1$, $\lambda=1.1$; the minimum
is found at $x_0\approx0.4801305$, as already mentioned in \autoref{fig:minimizer-slope}.
In \autoref{fig:one_dim_min_phi_2} we illustrate the similar situation for $\phi_2,\theta_2$, $L=1$,
$H=0.55$ and $\lambda=1.5$.
\begin{figure}[ht]
	\includegraphics[width=0.49\textwidth]{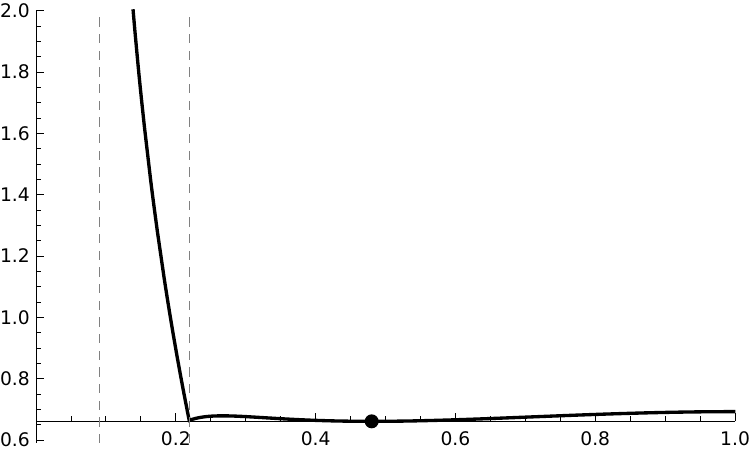}\hfill
	\includegraphics[width=0.49\textwidth]{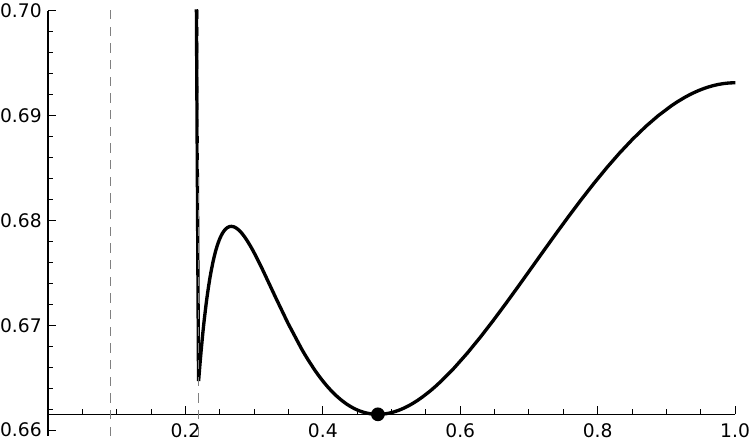}
	\caption{plot of the function $f$ defined in \eqref{eq:function-x_0} associated with
		non-linearities $\phi_1,\theta_1$ and parameters $L=H=1$, $\lambda=1.1$. The same plot is
		presented at two different vertical zoom levels. The vertical dashed lines, from left to right,
		are positioned at $L-\frac{1-\phi'(0)}\lambda\approx0.0909091$ and
		$L-\frac{(\phi^*)^{-1}(\lambda H)-\phi'(0)}\lambda\approx0.21909$.
		The minimum, highlighted with a black dot, is found at $x_0\approx0.4801305$, as already
		mentioned in \autoref{fig:minimizer-slope}.}
	\label{fig:one_dim_min_phi_1}
\end{figure}
\begin{figure}[ht]
	\includegraphics[width=0.49\textwidth]{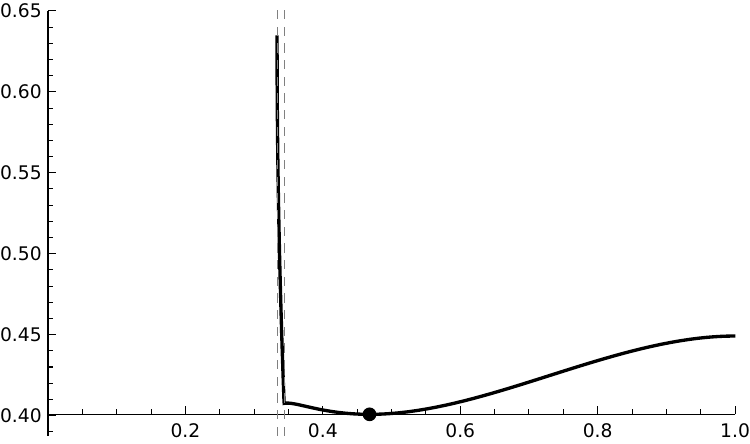}\hfill
	\includegraphics[width=0.49\textwidth]{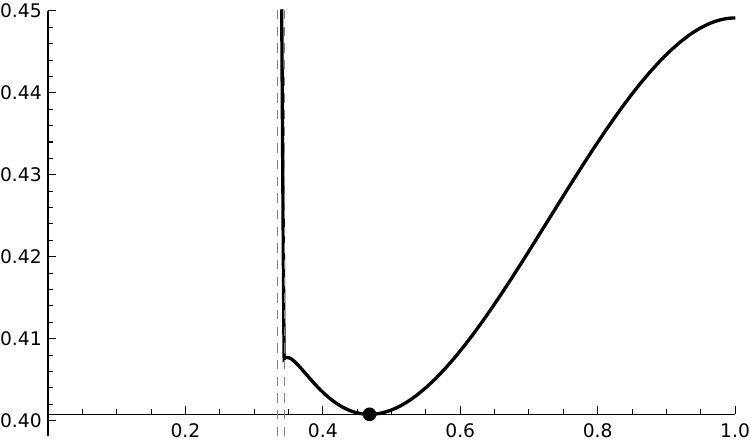}
	\caption{plot of the function $f$ defined in \eqref{eq:function-x_0} associated with
		non-linearities $\phi_2,\theta_2$ and parameters $L=1$, $H=0.55$, $\lambda=1.5$. The same plot is
		presented at two different vertical zoom levels. The vertical dashed lines, from left to right,
		are positioned at $L-\frac{1-\phi'(0)}\lambda\approx0.333333$ and
		$L-\frac{(\phi^*)^{-1}(\lambda H)-\phi'(0)}\lambda\approx0.343621$.
		The minimum, highlighted with a black dot, is found at $x_0\approx0.467959$.}
	\label{fig:one_dim_min_phi_2}
\end{figure}

\appendix

\section{Miscellanea}\label{sec:miscellanea}

\subsection{Alternative proofs of \autoref{prop:E-monotone}\ref{it:E-monotone}}\label{sec:E-monotone-alt-proofs}

\begin{proof}[Alternative proof of \autoref{prop:E-monotone}\ref{it:E-monotone}, from definition \eqref{eq:E-def}]
	\emph{Monotonicity in $L$.}
	Let $L_1\leq L_2$ and $u\in\BV\bigl((0,L_1)\bigr)$ be optimal for $E(L_1,H)$ and increasing.
	Define $v\in\BV\bigl((0,L_2)\bigr)$ by $v(x)=u(L_1/L_2x)$.
	The limits at the endpoints are preserved, so $v$ is admissible for $E(L_2,H)$.
	Setting $T(x)=L_2/L_1x$, we have $\D v=T_\#\D u$ and also $\D^{a,c,j}v=T_\#\D^{a,c,j}u$.
	Owing to the bijectivity of $T$, the contributions of $\D^c$ and $\D^j$ to the functional
	$\TVphitheta$ are preserved, because
	\[
		\begin{split}
			\abs{\D^cv}\bigl((0,L_2)\bigr)
			&= \abs{T_\#\D^cu}\bigl((0,L_2)\bigr)
			= T_\#\abs{\D^cu}\bigl((0,L_2)\bigr)
			= \abs{\D^cu}\bigl(T^{-1}\bigl((0,L_2)\bigr)\bigr)
			= \abs{\D^cu}\bigl((0,L_1))
		\end{split}
	\]
	and $\abs{\D^jv}$ and $\abs{\D^ju}$ have the same atom values.
	On the other hand, since $L_1/L_2\leq1$, we have $\phi\bigl((L_1/L_2)t\bigr)\leq (L_1/L_2)\phi(t)$
	for every $t\geq0$, hence
	\[
		\begin{split}
			\int_0^{L_2} \phi\bigl(v'(x)\bigr) \dx
			&= \int_0^{L_2} \phi\oleft(\frac{L_1}{L_2}u'\oleft(\frac{L_1}{L_2}x\right)\right) \dx \\
			&\leq \int_0^{L_2} \frac{L_1}{L_2} \phi\oleft(u'\oleft(\frac{L_1}{L_2}x\right)\right) \dx
			= \int_0^{L_1} \phi\bigl(u'(x)\bigr) \dx
		\end{split}
	\]
	Therefore
	$E(L_2,H)\leq\TVphitheta\bigl(v;(0,L_2)\bigr)=\TVphitheta\bigl(u;(0,L_1)\bigr)\leq E(L_1,H)$.

	\emph{Monotonicity in $H$.}
	Let $H_1\leq H_2$ and $u\in\BV\bigl((0,L)\bigr)$ optimal for $E(L,H_2)$.
	Define $v\in\BV\bigl((0,L)\bigr)$ by $v(x)=(H_1/H_2)u(x)$.
	We have $v^+(0)=0$ and $v^-(L)=H_1$, so $v$ is admissible for $E(L,H_1)$.
	Since $\phi$ and $\theta$ are monotone and $\abs{\D v}=(H_1/H_2)\abs{\D u}\leq \abs{\D u}$, we get
	$E(L,H_1)\leq\TVphitheta\bigl(v;(0,L)\bigr)\leq\TVphitheta\bigl(u;(0,L)\bigr)=E(L,H_2)$.
\end{proof}

\begin{proof}[Alternative proof of \autoref{prop:E-monotone}\ref{it:E-monotone}, from \eqref{eq:E-min-ab}]
	\emph{Monotonicity in $L$.}
	Recall the formula
	\[
		E(L,H)
		= \min\set*{L\phi\oleft(\frac{H-b}L\right)+\theta(b)}{0\leq b\leq H}
	\]
	in \eqref{eq:E-min-ab}.
	Observe that for every $t\geq0$ the function $L\mapsto L\phi(t/L)$ is decreasing.
	Indeed, if $L_1\leq L_2$, then $\phi(t/L_2)=\phi\bigl((L_1/L_2)(t/L_1)\bigr)\leq (L_1/L_2)\phi(t/L_1)$.
	This means that the objective function to minimize is decreasing in $L$, and therefore so
	is $E(L,H)$.

	\emph{Monotonicity in $H$.}
	Let $L_1\leq L_2$ and $a,b\geq0$ with $La+b=H_2$ be optimal in \eqref{eq:E-min-ab} for $E(L,H_2)$.
	Setting $\tilde a=(H_1/H_2)a\leq a$ and $\tilde b=(H_1/H_2)b\leq b$, we have $L\tilde a+\tilde b=H_1$.
	Since $\phi$ and $\theta$ are increasing, we deduce
	$E(L,H_1)\leq L\phi(\tilde a)+\theta(\tilde b)\leq L\phi(a)+\theta(b)=E(L,H_2)$.
\end{proof}

\section{First variation of the functional in \autoref{sec:nonlinear-min}}\label{sec:first-variation}

Let $\sigma:[0,\infty\bigr)\to\setR$ be a function which admits left and right derivatives at every point.
We introduce the following notation for the one-sided directional derivative of the composition
$\sigma\circ\abs\plchldr$:
\[
	\mathcal{D}_\sigma(a)[b]
	\coloneqq (\sigma\circ\abs\plchldr)'_{\sign b}(a)b
	= \lim_{\eps\to0^+} \frac{\sigma(\abs{a+\eps b})-\sigma(\abs{a})}\eps
	= \begin{cases}
		\sigma'_+(0)\abs{b}                   & a=0,    \\
		\sigma'_{\sign(ab)}(\abs{a})\sign(a)b & a\neq0,
	\end{cases}
\]
where $\sigma'_-,\sigma'_+$ denote the left and right derivatives respectively.
Notice that, for every fixed $a\in\setR$, the function $\mathcal{D}_\sigma(a)[b]$ is positively
1-homogeneous in $b$.
Moreover, if $\sigma$ is differentiable everywhere, then
$\mathcal{D}_\sigma(a)[b]=\sigma'(\abs{a})\sign(a)b$ is linear in $b$ for $a\neq0$, and it is linear for
$a=0$ if and only if $\sigma'_+(0)=0$.

Let us compute the one-sided G\^ateaux derivative of the functional $F$.
Starting with the total variation, for $u,v\in W^{1,1}\bigl((0,L)\bigr)$ we have
\[
	\begin{split}
		\lim_{\eps\to0^+} \frac{\TVphitheta(u+\eps v)-\TVphitheta(u)}\eps
		&= \int_0^L \lim_{\eps\to0^+} \frac{\phi\bigl(\abs{u'(x)+\eps v'(x)}\bigr)-\phi\bigl(\abs{u'(x)}\bigr)}\eps \dx \\
		&= \int_0^L \mathcal{D}_\phi\bigl(u'(x)\bigr)[v'(x)] \dx;
	\end{split}
\]
on the other hand, the derivative of the penalization at the right endpoint is
\[
	\lim_{\eps\to0^+} \frac{\theta\bigl(\abs{u^-(L)+\eps v^-(L)-H}\bigr)-\theta\bigl(\abs{u^-(L)-H}\bigr)}\eps
	= \mathcal{D}_\theta\bigl(u^-(L)-H\bigr)[v^-(L)].
\]
Let us now compute the one-sided G\^ateaux derivative of $\norm{u}_p^\gamma$.
When $u=0$ is the identically zero function, we have
\[
	\lim_{\eps\to0^+} \frac{\norm{u+\eps v}_p^\gamma-\norm{u}_p^\gamma}\eps
	= \lim_{\eps\to0^+} \frac{\eps^\gamma}\eps \norm{v}_p^\gamma
	= \begin{cases}
		0          & \gamma>1,  \\
		\norm{v}_p & \gamma=1,  \\
		\infty     & \gamma <1.
	\end{cases}
\]
Assume now $u\not\equiv0$, hence $\norm{u}_p>0$.
For $p=1$ we have
\[
	\begin{split}
		\lim_{\eps\to0^+} \frac{\norm{u+\eps v}_p^p-\norm{u}_p^p}\eps
		&= \lim_{\eps\to0^+} \int_0^L \frac{\abs{u(x)+\eps v(x)}-\abs{u(x)}}\eps \dx \\
		&= \int_{\{u\neq0\}} \sign u(x) v(x) \dx + \int_{\{u=0\}} \abs{v(x)} \dx,
	\end{split}
\]
whereas for $p>1$ we have
\[
	\begin{split}
		\lim_{\eps\to0^+} \frac{\norm{u+\eps v}_p^p-\norm{u}_p^p}\eps
		&= \lim_{\eps\to0^+} \int_0^L \frac{\abs{u(x)+\eps v(x)}^p-\abs{u(x)}^p}\eps \dx \\
		&= \int_0^L p \abs{u(x)}^{p-1} \sign u(x) v(x) \dx \\
		&= \int_{\{u\neq0\}} p \abs{u(x)}^{p-1} \sign u(x) v(x) \dx.
	\end{split}
\]
These last two formulas can be combined succinctly as
\begin{equation}\label{eq:norm-first-variation}
	\lim_{\eps\to0^+} \frac{\norm{u+\eps v}_p^p-\norm{u}_p^p}\eps
	= \int_{\{u\neq0\}} \abs{u(x)}^{p-1} \sign u(x) v(x) \dx
	+ \delta_{p,1} \int_{\{u=0\}} \abs{v(x)} \dx,
\end{equation}
valid for all $p\geq1$, where $\delta_{p,1}$ denotes the Kronecker delta.
Writing $\norm{u}_p^\gamma=(\norm{u}_p^p)^{\gamma/p}$, by the chain rule we finally get
\[
	\lim_{\eps\to0^+} \frac{\norm{u+\eps v}_p^\gamma-\norm{u}_p^\gamma}\eps
	= \frac\gamma p \norm{u}_p^{\gamma-p}
	\lim_{\eps\to0^+} \frac{\norm{u+\eps v}_p^p-\norm{u}_p^p}\eps,
\]
where this latter right derivative is given by \eqref{eq:norm-first-variation}.

In conclusion, the first variation of the functional $F$ at the identically zero function $u=0$ is
\[
	\lim_{\eps\to0^+} \frac{F(0+\eps v)-F(0)}\eps
	= \int_0^L \phi'_+(0) \abs{v'(x)} \dx
	+ \theta'_{\sign v^-(L)}(H) v^-(L)
	+ \begin{cases}
		0                  & \gamma>1,  \\
		\lambda \norm{v}_p & \gamma=1,  \\
		\infty             & \gamma <1;
	\end{cases}
\]
whereas for $u\not\equiv0$ instead we have
\[
	\begin{split}
		& \lim_{\eps\to0^+} \frac{F(u+\eps v)-F(u)}\eps \\
		&= \int_0^L \mathcal{D}_\phi\bigl(u'(x)\bigr)[v'(x)] \dx
		+ \mathcal{D}_\theta\bigl(u^-(L)-H\bigr)[v^-(L)]
		\spliteq + \lambda \gamma \norm{u}_p^{\gamma-p} \left(
		\int_{\{u\neq0\}} \abs{u(x)}^{p-1} \sign u(x) v(x) \dx
		+ \delta_{p,1} \int_{\{u=0\}} \abs{v(x)} \dx \right).
	\end{split}
\]

\subsection{Differentiable setting}

If $\phi$ and $\theta$ are differentiable everywhere, we can expand out the definition of
$\mathcal{D}_\phi,\mathcal{D}_\theta$ and obtain the simplified expression
\[
	\begin{split}
		& \lim_{\eps\to0^+} \frac{F(u+\eps v)-F(u)}\eps \\
		&= \int_{\{u'\neq0\}} \phi'\bigl(u'(x)\bigr)\sign\bigl(u'(x)\bigr) v'(x) \dx
		+ \int_{\{u'=0\}} \phi'(0) \abs{v'(x)} \dx
		\spliteq + \begin{cases}
			\abs{v^-(L)}                                             & u^-(L)=H     \\
			\theta'\bigl(\abs{u^-(L)-H}\bigr) \sign(u^-(L)-H) v^-(L) & u^-(L)\neq H
		\end{cases}
		\spliteq + \lambda \gamma \norm{u}_p^{\gamma-p} \left(
		\int_{\{u\neq0\}} \abs{u(x)}^{p-1} \sign u(x) v(x) \dx
		+ \delta_{p,1} \int_{\{u=0\}} \abs{v(x)} \dx \right) .
	\end{split}
\]

\subsubsection{Special case \texorpdfstring{$p=\gamma=1$}{p=γ=1}}\label{sec:first-variation-simple}

Since $u(x)=\int_0^x u'(y)\dy$, for non-decreasing functions the functional can be rewritten as
\[
	\begin{split}
		F(u)
		&= \int_0^L \phi\bigl(u'(x)\bigr)\dx + \theta\bigl(H-u^-(L)\bigr)
		+ \lambda \int_0^L\int_0^x u'(y)\dy\dx \\
		&= \int_0^L \bigl[\phi\bigl(u'(x)\bigr)+\lambda(L-x)u'(x)\bigr]\dx
		+ \theta\bigl(H-u^-(L)\bigr).
	\end{split}
\]
The first integral is convex in $u'$, whereas the second addend is concave.

\paragraph{Optimality conditions}
Let us derive simplified optimality conditions obtained from vertical variations.
\begin{itemize}
	\item If $u^-(L)=H$, the first variation with respect to $v\in C^1((0,L))$ with $v\rvert_{(0,x_0]}=0$ is
	      \[
		      \lim_{\eps\to0^+} \frac{F(u+\eps v)-F(u)}\eps
		      = \int_{x_0}^L \bigl[\phi'\bigl(u'(x)\bigr) v'(x) + \lambda(L-x) v'(x)\bigr] \dx + \abs{v^-(L)}.
	      \]
	      When $v^-(L)\geq0$ the one-sided variation is
	      \[
		      \int_{x_0}^L \bigl[\phi'\bigl(u'(x)\bigr) + \lambda(L-x) + 1\bigr] v'(x) \dx.
	      \]
	      If $u$ is a local minimizer, this must be positive for every $v$ such that
	      $\int_{x_0}^L v'(x) \dx \geq0$, hence $\phi'\bigl(u'(x)\bigr) + \lambda(L-x) + 1$ must be a
	      non-negative constant.

	      When $v^-(L)\leq0$ the one-sided variation is
	      \[
		      \int_{x_0}^L \bigl[\phi'\bigl(u'(x)\bigr) + \lambda(L-x) - 1\bigr] v'(x) \dx.
	      \]
	      If $u$ is a local minimizer, this must be positive for every $v$ such that
	      $\int_{x_0}^L v'(x) \dx \leq0$, hence $\phi'\bigl(u'(x)\bigr) + \lambda(L-x) - 1$ must be a
	      non-positive constant.

	      Putting these two conditions together we get that
	      $\phi'\bigl(u'(x)\bigr) + \lambda(L-x)$ must be a constant in $[0,1]$.

	\item If instead $u^-(L)<H$, then for every variation $v\in C^1\bigl((0,L)\bigr)$ with $v\rvert_{(0,x_0]}=0$
	      we have
	      \[
		      \begin{split}
			      \lim_{\eps\to0} \frac{F(u+\eps v)-F(u)}\eps
			      &= \int_{x_0}^L \bigl[\phi'\bigl(u'(x)\bigr) v'(x) + \lambda(L-x)v'(x)\bigr] \dx
			      - \theta'\bigl(H-u^-(L)\bigr)v^-(L) \\
			      &= \int_{x_0}^L \bigl[\phi'\bigl(u'(x)\bigr) + \lambda(L-x) - \theta'\bigl(H-u^-(L)\bigr)\bigr] v'(x) \dx.
		      \end{split}
	      \]
	      If $u$ is a local minimizer, then this variation must be zero, hence by the arbitrariness of $v$ we deduce
	      \begin{equation}\label{eq:nonlinear-minimizer-ode}
		      \phi'\bigl(u'(x)\bigr) + \lambda(L-x) = \theta'\bigl(H-u^-(L)\bigr)
		      \qquad \text{for a.e.\ $x\in(x_0,L)$}.
	      \end{equation}
	      Under continuity assumptions, evaluating at $x=L$ we get
	      \[
		      \phi'\bigl(u'(L)\bigr) = \theta'\bigl(H-u^-(L)\bigr).
	      \]
\end{itemize}

We may also consider horizontal variations arising from translations of $u$.
Given $\alpha\in\setR$, define the extended function
\[
	u_\alpha(x)
	= \begin{cases}
		0                   & x\leq0,   \\
		u(x)                & x\in(0,L) \\
		u^-(L)+\alpha(x-L), & x\geq L,
	\end{cases}
\]
and observe that $F(u)=F(u_\alpha)$.
\begin{itemize}
	\item \textit{Right translation.} Since $u'(L)\geq0$, we have
	      \[
		      \lim_{\tau\to0^+} \frac{F(u_\alpha(\plchldr-\tau))-F(u_\alpha)}\tau
		      = -\phi\bigl(u'(L)\bigr) + \theta'\bigl(H-u^-(L)\bigr)u'(L) - \lambda u^-(L) \geq 0.
	      \]
	\item \textit{Left translation.} The left translation is admissible if and only if $u$ is
	      identically zero in a right neighborhood of $0$.
	      If $u^-(L)<H$, we have
	      \[
		      \lim_{\tau\to0^+} \frac{F(u_\alpha(\plchldr+\tau))-F(u_\alpha)}\tau
		      = \phi(\alpha) - \theta'\bigl(H-u^-(L)\bigr)\alpha + \lambda u^-(L) \geq 0,
	      \]
	      from which we deduce the necessary condition
	      \[
		      \lambda u^-(L)
		      \geq \sup_{\alpha\in\setR} \bigl(\theta'\bigl(H-u^-(L)\bigr)\alpha-\phi(\alpha)\bigr)
		      = \phi^*\bigl(\theta'\bigl(H-u^-(L)\bigr)\bigr)
		      = \phi^*\bigl(\phi'\bigl(u'(L)\bigr)\bigr).
	      \]
	      If instead $u^-(L)=H$, then
	      \[
		      \lim_{\tau\to0^+} \frac{F(u_\alpha(\plchldr+\tau))-F(u_\alpha)}\tau
		      = \phi(\alpha) + \theta'\bigl(H-u^-(L)\bigr)\abs{\alpha} + \lambda u^-(L) \geq 0,
	      \]
	      which is a condition always satisfied since the three summands are individually non-negative.
\end{itemize}

Recall that the minimizer is a convex function $u\in W^{1,1}\bigl((0,L);[0,H]\bigr)$ with $u^+(0)=0$.
Let $x_0=\max\set{x\in[0,L]}{u\rvert_{(0,x]}=0}$.
From \eqref{eq:nonlinear-minimizer-ode} we deduce that the optimal profile $u$ must solve the ODE
\begin{equation}\label{eq:optimal-profile-ode}
	\phi'\bigl(u'(x)\bigr) + \lambda(L-x) = \theta'\bigl(H-u^-(L)\bigr) \eqqcolon C,
\end{equation}
where $C$ is a constant.
From the inequalities $\phi'(0)\leq\phi'\leq\phi'(\infty)=1$ and
$\theta'(\infty)\leq\theta'\leq\theta'(0)=1$,
evaluating \eqref{eq:optimal-profile-ode} at $x=L$ we deduce $\max\{\phi'(0),\theta'(\infty)\}\leq C\leq1$;
evaluating \eqref{eq:optimal-profile-ode} at $x=x_0$ we deduce $\lambda(L-x_0)\leq1-\phi'(0)$,
that is
\[
	x_0\geq L-\frac{1-\phi'(0)}\lambda.
\]
If $\lambda L>1-\phi'(0)$, this ensures $x_0>0$.%
\footnote{Incidentally, observe that in then linear case $\phi(t)=t$ we have
	$\lambda(L-x_0)\leq1-1=0$, therefore $x_0=L$ and the optimal profile cannot detach from $0$.
	This is consistent with the results in \autoref{sec:linear-min}.}

We now wish to find a condition relating $x_0$ and $C$. To do so, we perform yet another variation in
the following manner. Assuming $x_0\in(0,L)$, fix an arbitrary point $\bar x\in(x_0,L)$ and,
extending $u=0$ before $0$ as usual, define the function
\[
	u_\eps(x) = \begin{cases}
		u\bigl((1+\eps)(x-\bar x)+\bar x\bigr) & x\in(0,\bar x), \\
		u(x)                                   & x\in[\bar x,L).
	\end{cases}
\]
A straightforward computation shows that
\[
	\begin{split}
		0 &= \frac{\d}{\d\eps}\biggr\rvert_{\eps=0} F(u_\eps)
		= \int_{x_0}^{\bar x} \left\{ \phi'\bigl(u'(x)\bigr) \bigl[u'(x)+u''(x)(x-\bar x)\bigr]
		+ \lambda u(x) \right\} \dx,
	\end{split}
\]
from which, dividing by $\bar x-x_0$ and sending $\bar x\to x_0^+$, we infer
\[
	0 = \phi'\bigl(u'(x_0)\bigr) u'(x_0) + \lambda u(x_0) = \phi'\bigl(u'(x_0)\bigr) u'(x_0).
\]
If $\phi'>0$, then necessarily $u'(x_0)=0$, hence $\phi'\bigl(u'(x_0)\bigr)=\phi'(0)$.
Otherwise, let $[0,\delta]=\set{t\geq0}{\phi'(t)=0}$.
From the above condition we deduce either $u'(x_0)=0$ or $u'(x_0)\in[0,\delta]$, therefore again
$\phi'\bigl(u'(x_0)\bigr)=\phi'(0)$.
Evaluating \eqref{eq:optimal-profile-ode} at $x=x_0$ yields $C=\phi'(0)+\lambda(L-x_0)$,
and consequently \eqref{eq:optimal-profile-ode} becomes
\begin{equation}\label{eq:optimal-profile-ode-simplified}
	\phi'\bigl(u'(x)\bigr) = \phi'(0) + \lambda(x-x_0).
\end{equation}

\paragraph{Explicit formula for the optimal profile}
Recall the Fenchel--Young inequality $\phi(t)+\phi^*(\tau)\geq \tau t$ with its equality conditions
\begin{equation}\label{eq:Fenchel-Young-equality}
	\phi(t) + \phi^*(\tau) = t\tau
	\qquad\Longleftrightarrow\qquad
	\tau \in \partial\phi(t)
	\qquad\Longleftrightarrow\qquad
	t = \partial\phi^*(\tau)
\end{equation}
expressed in terms of the sub-differential. Even when $\phi$ is not strictly convex, the ODE
\eqref{eq:optimal-profile-ode-simplified} implies $u'(x)\in\partial\phi^*\bigl(\phi'(0)+\lambda(x-x_0)\bigr)$.
Since $\phi^*$ is locally Lipschitz in its domain, by Rademacher's theorem it is differentiable
almost everywhere,%
\footnote{Actually, being one-dimensional, it is differentiable outside of a countable set.}
therefore
\begin{equation}\label{eq:optimal-profile-ode-ae}
	u'(x)=(\phi^*)'\bigl(\phi'(0)+\lambda(x-x_0)\bigr) \qquad
	\text{for almost every $x\in(x_0,L)$}.
\end{equation}
This is sufficient to infer that
\begin{equation}\label{eq:optimal-profile-explicit}
	\begin{split}
		u(x)
		&= \int_{x_0}^x (\phi^*)'\bigl(\phi'(0)+\lambda(y-x_0)\bigr) \dy
		= \frac1\lambda \int_{\phi'(0)}^{\phi'(0)+\lambda(x-x_0)} (\phi^*)'(z) \dz \\
		&= \frac1\lambda \bigl[\phi^*\bigl(\phi'(0)+\lambda(x-x_0)\bigr)-\phi^*\bigl(\phi'(0)\bigr)\bigr]
		= \frac1\lambda \phi^*\bigl(\phi'(0)+\lambda(x-x_0)\bigr).
	\end{split}
\end{equation}

\paragraph{Formula for the resulting functional}
Setting $\tau=\phi'(t)$ in \eqref{eq:Fenchel-Young-equality} shows that
\[
	\phi(t) = t\phi'(t) - \phi^*\bigl(\phi'(t)\bigr).
\]
Evaluating it at $t=u'(x)$ and using \eqref{eq:optimal-profile-ode-simplified} and
\eqref{eq:optimal-profile-explicit} leads to
\[
	\begin{split}
		\phi\bigl(u'(x)\bigr) + \lambda u(x)
		&= u'(x)\phi'\bigl(u'(x)\bigr) - \phi^*\bigl(\phi'\bigl(u'(x)\bigr)\bigr)
		+ \phi^*\bigl(\phi'(0)+\lambda(x-x_0)\bigr) \\
		&= u'(x) \bigl(\phi'(0)+\lambda(x-x_0)\bigr).
	\end{split}
\]
Using \eqref{eq:optimal-profile-ode-ae}, we get
\[
	\begin{split}
		\int_{x_0}^L \left\{ \phi\bigl(u'(x)\bigr) + \lambda u(x) \right\} \dx
		&= \int_{x_0}^L \bigl(\phi'(0)+\lambda(x-x_0)\bigr)(\phi^*)'\bigl(\phi'(0)+\lambda(x-x_0)\bigr) \dx \\
		&= \frac1\lambda \int_{\phi'(0)}^{\phi'(0)+\lambda(L-x_0)} y (\phi^*)'(y) \dy,
	\end{split}
\]
hence the functional becomes
\begin{equation}\label{eq:optimal-profile-functional}
	F(u)
	= \frac1\lambda \int_{\phi'(0)}^{\phi'(0)+\lambda(L-x_0)} y (\phi^*)'(y) \dy
	+ \theta\bigl(\abs{H-u(L)}\bigr),
\end{equation}
which has to be minimized over $x_0\in\left[L-\frac{1-\phi'(0)}\lambda,L\right]$.
In addition, we know that the minimizer satisfies $u(L)\leq H$;
therefore, let us find a more explicit condition on $x_0$ which ensures this property and allows to
remove the absolute value in the argument of $\theta$.

We claim that $\phi^*:[\phi'(0),1]\to[0,\phi^*(1)]$ is strictly increasing, continuous and bijective.
Suppose that $\phi^*(\tau)=0$ for some $\tau\in[\phi'(0),1]$; then
\[
	\phi(t)-\phi(0)
	= \phi(t)
	= \phi(t)+\phi^*(\tau)
	\geq \tau t,
\]
from which, dividing by $t$ and letting $t\to0^+$, we get $\phi'(0)\geq\tau$, hence $\tau=\phi'(0)$.
This proves that $\tau>\phi'(0)$ implies $\phi^*(\tau)>0$.
Since $\phi^*$ is convex, for every $\sigma>\tau$ we have
\[
	\phi^*(\sigma)
	\geq \phi^*(0) + \frac\sigma\tau \bigl(\phi^*(\tau)-\phi^*(0)\bigr)
	= \frac\sigma\tau \phi^*(\tau)
	> \phi^*(\tau),
\]
hence the claimed monotonicity. The continuity follows from the basic properties of the Legendre transform.
The bijectivety follows from continuity and strict monotonicity.

Using \eqref{eq:optimal-profile-explicit}, the condition $u(L)\leq H$ is therefore trivially
satisfied if $\lambda H\geq\phi^*(1)$, otherwise it is equivalent to
$\phi'(0)+\lambda(L-x_0) \leq (\phi^*)^{-1}(\lambda H)$, that is
\[
	x_0 \geq L - \frac{(\phi^*)^{-1}(\lambda H)-\phi'(0)}\lambda,
\]
where by $(\phi^*)^{-1}$ we denote the inverse of the strictly increasing restriction
$\phi^*\rvert_{[\phi'(0),1]}$. This latter inequality is correct in both cases if we interpret
$(\phi^*)^{-1}(\lambda H)=1$ when $\lambda H\geq\phi^*(1)$.
Notice that, since $(\phi^*)^{-1}(\lambda H)\leq 1$, this condition on $x_0$ is more restrictive
than the previously found lower bound $x_0\geq L-\frac{1-\phi'(0)}\lambda$.

\paragraph{Reduction to a one-dimensional minimization problem}
Finding the optimal profile $u$ has been reduced to minimizing the function
\begin{equation}\label{eq:function-x_0}
	f(x_0) \coloneqq \frac1\lambda \int_{\phi'(0)}^{\phi'(0)+\lambda(L-x_0)} y (\phi^*)'(y) \dy
	+ \theta\oleft(H - \frac1\lambda \phi^*\bigl(\phi'(0)+\lambda(L-x_0)\bigr)\right)
\end{equation}
under the constraint
\begin{equation}\label{eq:constraint-x_0}
	L - \frac{(\phi^*)^{-1}(\lambda H)-\phi'(0)}\lambda \leq x_0 \leq L.
\end{equation}

If $\phi$ is strictly convex then $\phi^*$ is differentiable everywhere, therefore we can compute
\[
	\begin{split}
		\frac{\d F(u)}{\dx_0}
		&= f'(x_0)
		= -y(\phi^*)'(y) \bigr\rvert_{y=\phi'(0)+\lambda(L-x_0)}
		+ \theta'\bigl(H-\phi^*(y)/\lambda\bigr) (\phi^*)'(y) \bigr\rvert_{y=\phi'(0)+\lambda(L-x_0)} \\
		&= (\phi^*)'(y) \bigl[\theta'\bigl(H-\phi^*(y)/\lambda\bigr) - y\bigr]
		\bigr\rvert_{y=\phi'(0)+\lambda(L-x_0)}.
	\end{split}
\]
If the minimizer $x_0$ is an interior point, it must satisfy $f'(x_0)=0$, which is equivalent to
\[
	\theta'\oleft(H-\frac1\lambda\phi^*\bigl(\phi'(0)+\lambda(L-x_0)\bigr)\right) = \phi'(0)+\lambda(L-x_0),
\]
because
\[
	(\phi^*)'\bigl(\phi'(0)+\lambda(L-x_0)\bigr)
	\geq \frac{\phi^*\bigl(\phi'(0)+\lambda(L-x_0)\bigr)-\phi^*\bigl(\phi'(0)\bigr)}{\lambda(L-x_0)}
	> 0
\]
cannot vanish by the strict monotonicity of $\phi^*$.

\paragraph{Acknowledgements}
The authors are members of GNAMPA, INdAM.
This work was partially supported by the European Union - Next Generation EU and Research Projects PRIN2022 PNRR ``Geometric-Analytic Methods for PDEs and Applications'' (2022SLTHCE, cup E53D23005880006), awarded with D.D.\ 104 - 02/02/2022 of the Italian Ministry for University and Research. This manuscript reflects only the authors' views and opinions and the Ministry cannot be considered responsible for them.

\paragraph{Conflict of interest}
The authors declare no conflict of interest.

\paragraph{Data availability statement}
We do not analyse or generate any datasets, because our work proceeds within a theoretical and mathematical approach.

\printbibliography[heading=bibintoc]

\end{document}

%% file: img/holder.pdf_tex
\begingroup%
\makeatletter%
\providecommand\color[2][]{%
	\errmessage{(Inkscape) Color is used for the text in Inkscape, but the package 'color.sty' is not loaded}%
	\renewcommand\color[2][]{}%
}%
\providecommand\transparent[1]{%
	\errmessage{(Inkscape) Transparency is used (non-zero) for the text in Inkscape, but the package 'transparent.sty' is not loaded}%
	\renewcommand\transparent[1]{}%
}%
\providecommand\rotatebox[2]{#2}%
\newcommand*\fsize{\dimexpr\f@size pt\relax}%
\newcommand*\lineheight[1]{\fontsize{\fsize}{#1\fsize}\selectfont}%
\ifx\svgwidth\undefined%
	\setlength{\unitlength}{250.37471981bp}%
	\ifx\svgscale\undefined%
		\relax%
	\else%
		\setlength{\unitlength}{\unitlength * \real{\svgscale}}%
	\fi%
\else%
	\setlength{\unitlength}{\svgwidth}%
\fi%
\global\let\svgwidth\undefined%
\global\let\svgscale\undefined%
\makeatother%
\begin{picture}(1,1)%
	\lineheight{1}%
	\setlength\tabcolsep{0pt}%
	\put(0,0){\includegraphics[width=\unitlength,page=1]{img/holder.pdf}}%
	\put(0.1622591,0.04025279){\makebox(0,0)[lt]{\lineheight{1.25}\smash{\begin{tabular}[t]{l}$a$\end{tabular}}}}%
	\put(0.86919921,0.04025279){\makebox(0,0)[lt]{\lineheight{1.25}\smash{\begin{tabular}[t]{l}$b$\end{tabular}}}}%
	\put(-0.00199727,0.20512689){\makebox(0,0)[lt]{\lineheight{1.25}\smash{\begin{tabular}[t]{l}$g(a)$\end{tabular}}}}%
	\put(-0.00199727,0.8761211){\makebox(0,0)[lt]{\lineheight{1.25}\smash{\begin{tabular}[t]{l}$g(b)$\end{tabular}}}}%
	\put(-0.00199727,0.36688485){\makebox(0,0)[lt]{\lineheight{1.25}\smash{\begin{tabular}[t]{l}$u(a)$\end{tabular}}}}%
	\put(-0.00199727,0.69039921){\makebox(0,0)[lt]{\lineheight{1.25}\smash{\begin{tabular}[t]{l}$u(b)$\end{tabular}}}}%
	\put(0.40201131,0.3830681){\makebox(0,0)[lt]{\lineheight{1.25}\smash{\begin{tabular}[t]{l}$g$\end{tabular}}}}%
	\put(0.3293594,0.48328708){\makebox(0,0)[lt]{\lineheight{1.25}\smash{\begin{tabular}[t]{l}$u$\end{tabular}}}}%
	\put(0.25212449,0.04025279){\makebox(0,0)[lt]{\lineheight{1.25}\smash{\begin{tabular}[t]{l}$a+(h/C)^{1/\alpha}$\end{tabular}}}}%
	\put(0.581631,0.04025279){\makebox(0,0)[lt]{\lineheight{1.25}\smash{\begin{tabular}[t]{l}$b-(h/C)^{1/\alpha}$\end{tabular}}}}%
\end{picture}%
\endgroup%

%% file: biblio.bib
@book{AmbrosioBrueSemola,
	title = {Lectures on {{Optimal Transport}}},
	author = {Ambrosio, Luigi and Brué, Elia and Semola, Daniele},
	date = {2021},
	series = {{{UNITEXT}}},
	volume = {130},
	publisher = {Springer International Publishing},
	location = {Cham},
	doi = {10.1007/978-3-030-72162-6},
	url = {https://link.springer.com/10.1007/978-3-030-72162-6},
	isbn = {978-3-030-72161-9, 978-3-030-72162-6},
	langid = {english},
}

@inbook{CCN_survey,
	author = "Caselles, V. and Chambolle, A. and Novaga, M.",
	editor = "Scherzer, Otmar",
	title = "Total Variation in Imaging",
	bookTitle = "Handbook of Mathematical Methods in Imaging",
	year = "2015",
	publisher = "Springer New York",
	address = "New York, NY",
	pages = "1455--1499",
	isbn = "978-1-4939-0790-8",
	doi = "10.1007/978-1-4939-0790-8_23",
	url = "https://doi.org/10.1007/978-1-4939-0790-8_23",
}

@article{ROF,
	title = {Nonlinear total variation based noise removal algorithms},
	journal = {Physica D: Nonlinear Phenomena},
	volume = {60},
	number = {1},
	pages = {259-268},
	year = {1992},
	issn = {0167-2789},
	doi = {https://doi.org/10.1016/0167-2789(92)90242-F},
	url = {https://www.sciencedirect.com/science/article/pii/016727899290242F},
	author = {Leonid I. Rudin and Stanley Osher and Emad Fatemi},
}

@article{Chambolle2004,
	author = "Chambolle, Antonin",
	title = "An algorithm for total variation minimization and applications",
	journal = "Journal of Mathematical Imaging and Vision",
	year = "2004",
	month = "Jan",
	day = "01",
	volume = "20",
	number = "1",
	pages = "89--97",
	issn = "1573-7683",
	doi = "10.1023/B:JMIV.0000011325.36760.1e",
	url = "https://doi.org/10.1023/B:JMIV.0000011325.36760.1e",
}

@article{Chambolle1997,
	author = "Chambolle, Antonin and Lions, Pierre-Louis",
	title = "Image recovery via total variation minimization and related
	         problems",
	journal = "Numerische Mathematik",
	year = "1997",
	month = "Apr",
	day = "01",
	volume = "76",
	number = "2",
	pages = "167--188",
	issn = "0945-3245",
	doi = "10.1007/s002110050258",
	url = "https://doi.org/10.1007/s002110050258",
}

@book{AFP,
	Author = {Ambrosio, Luigi and Fusco, Nicola and Pallara, Diego},
	Title = {Functions of bounded variation and free discontinuity problems},
	isbn = {9780198502456},
	lccn = {99046602},
	series = {Oxford Science Publications},
	year = {2000},
	publisher = {Clarendon Press},
}

@book{boyd_convex_2023,
	author = {Boyd, Stephen P. and Vandenberghe, Lieven},
	title = {Convex Optimization},
	publisher = {Cambridge University Press},
	date = {2023},
	location = {Cambridge New York Melbourne New Delhi Singapore},
	edition = {Version 29},
	isbn = {978-0-521-83378-3},
	url = {https://web.stanford.edu/%7Eboyd/cvxbook/bv_cvxbook.pdf},
	pagetotal = {716},
	langid = {english},
}

@article{Talenti2016,
	author = "Talenti, Giorgio",
	title = "The Art of Rearranging",
	journal = "Milan Journal of Mathematics",
	year = "2016",
	month = "6",
	day = "01",
	volume = "84",
	number = "1",
	pages = "105--157",
	issn = "1424-9294",
	doi = "10.1007/s00032-016-0253-6",
	url = "https://link.springer.com/article/10.1007/s00032-016-0253-6",
}

@book{Baernstein,
	author = {Baernstein II, Albert},
	title = {Symmetrization in Analysis},
	year = {2019},
	series = {New Mathematical Monographs},
	collection = {New Mathematical Monographs},
	publisher = {Cambridge University Press},
	place = {Cambridge},
}

@book{Rakotoson,
	title = {Relative {{Rearrangement}}},
	author = {Rakotoson, Jean Michel},
	date = {2025},
	series = {Lecture {{Notes}} in {{Mathematics}}},
	volume = {2376},
	publisher = {Springer Nature Switzerland},
	location = {Cham},
	doi = {10.1007/978-3-032-02228-8},
	url = {https://link.springer.com/10.1007/978-3-032-02228-8},
	isbn = {978-3-032-02227-1, 978-3-032-02228-8},
	langid = {english},
}

@article{Optim.jl,
	author = {Mogensen, Patrick K. and Riseth, Asbjørn N.},
	title = {Optim: A mathematical optimization package for Julia},
	journal = {Journal of Open Source Software},
	publisher = {The Open Journal},
	year = {2018},
	volume = {3},
	number = {24},
	pages = {615},
	doi = {10.21105/joss.00615},
	url = {https://joss.theoj.org/papers/10.21105/joss.00615},
}

@article{Julia,
	author = {Bezanson, Jeff and Edelman, Alan and Karpinski, Stefan and Shah,
	          Viral B.},
	title = {Julia: A Fresh Approach to Numerical Computing},
	journal = {SIAM Review},
	year = {2017},
	volume = {59},
	number = {1},
	pages = {65-98},
	doi = {10.1137/141000671},
	url = {https://epubs.siam.org/doi/10.1137/141000671},
}
